\newcommand{\oper}[1]{\mathcal{#1}}
\newcommand{\norm}[1]{\left\|#1\right\|}
\newcommand{\inner}[1]{\left<#1\right>}
\newcommand{\E}{\mathbb{E}}
\newcommand{\R}{\mathbb{R}}
\newtheorem{lem}{Lemma}
\newtheorem{cor}{Corollary}
\newtheorem{thm}{Theorem}
\newtheorem{define}{Definition}
\DeclareMathOperator*{\argmin}{argmin}
\DeclareMathOperator*{\rank}{rank}
\DeclareMathOperator{\supp}{supp}
\DeclareMathOperator{\opt}{opt}
\begin{document}
\title{Linear Convergence of Stochastic Iterative Greedy Algorithms with Sparse Constraints}

\author{Nam Nguyen, Deanna Needell and Tina Woolf}

\date{\today}
\maketitle

\begin{abstract}
Motivated by recent work on stochastic gradient descent methods, we develop two stochastic variants of greedy algorithms for possibly non-convex optimization problems with sparsity constraints. We prove linear convergence\footnote{Linear convergence is sometime called exponential convergence} in expectation to the solution within a specified tolerance.  This generalized framework applies to problems such as sparse signal recovery in compressed sensing, low-rank matrix recovery, and covariance matrix estimation, giving methods with provable convergence guarantees that often outperform their deterministic counterparts.  We also analyze the settings where gradients and projections can only be computed approximately, and prove the methods are robust to these approximations.  We include many numerical experiments which align with the theoretical analysis and demonstrate these improvements in several different settings.
\end{abstract}

\section{Introduction}
\label{sec::Intro}

Over the last decade, the problem of high-dimensional data inference from limited observations has received significant consideration, with many applications arising from signal processing, computer vision, and machine learning. In these problems, it is not unusual that the data often lies in hundreds of thousands or even million dimensional spaces while the number of collected samples is sufficiently smaller. Exploiting the fact that data arising in real world applications often has very low intrinsic complexity and dimensionality, such as sparsity and low-rank structure, recently developed statistical models have been shown to perform accurate estimation and inference. These models often require solving the following optimization with the constraint that the model parameter is sparse: 
\begin{equation}
\label{opt::general loss with sparse vector constraint}
\min_w  F(w)  \quad\quad \text{subject to} \quad \norm{w}_0 \leq k.
\end{equation}
Here, $F(w)$ is the objective function that measures the model discrepancy, $\norm{w}_0$ is the $\ell_0$-norm that counts the number of non-zero elements of $w$, and $k$ is a parameter that controls the sparsity of $w$.

In this paper, we study a more unified optimization that can be applied to a broader class of sparse models. First, we define a more general notion of sparsity. Given the set $\oper D = \{d_1,d_2,... \}$ consisting of vectors or matrices $d_i$, which we call atoms, we say that the model parameter is sparse if it can be described as a combination of only a few elements from the atomic set $\oper D$. Specifically, let $w \in \R^n$ be represented as 
\begin{equation}
w = \sum_{i=1}^k \alpha_i d_i, \quad\quad d_i \in \oper D,
\end{equation}
where $\alpha_i$ are called coefficients of $w$; then, $w$ is called sparse with respect to $\oper D$ if $k$ is relatively small compared to the ambient dimension $n$. Here, $\oper D$ could be a finite set (e.g. $\oper D = \{ e_i\}_{i=1}^n$ where $e_i$'s are basic vectors in Euclidean space), or $\oper D$ could be infinite (e.g. $\oper D = \{ u_i v_i^* \}_{i=1}^{\infty}$ where $u_i v_i^*$'s are unit-norm rank-one matrices). This notion is general enough to handle many important sparse models such as group sparsity and low rankness (see \cite{CRPW_2012_J}, \cite{NCT_gradMP_2013_J} for some examples).

Our focus in this paper is to develop algorithms for the following optimization:
\begin{equation}
\label{opt::general min}
\min_w  \underbrace{\frac{1}{M}\sum_{i=1}^M f_i (w) }_{F(w)} \quad\text{subject to} \quad \norm{w}_{0,\oper D} \leq k,
\end{equation}
where $f_i(w)$'s, $w \in \R^n$, are smooth functions which can be \textit{non-convex}; $\norm{w}_{0,\oper D}$ is defined as the norm that captures the sparsity level of $w$. In particular, $\norm{w}_{0,\oper D}$ is the smallest number of atoms in $\oper D$ such that $w$ can be represented by them: 
\begin{equation}
\norm{w}_{0,\oper D} = \min_k \{k:  w = \sum_{i \in T} \alpha_i d_i \quad\text{with} \quad |T| = k \}.
\end{equation}
Also in (\ref{opt::general min}), $k$ is a user-defined parameter that controls the sparsity of the model. The formulation (\ref{opt::general min}) arises in many signal processing and machine learning problems, for instance, compressed sensing (e.g. \cite{Donoho_CS_2006_J}, \cite{CRT_CS_2004_J}), Lasso (\cite{Tibshirani_Lasso_1996_J}), sparse logistic regression, and sparse graphical model estimation (e.g. \cite{YL_2006_J}). In the following, we provide some examples to demonstrate the generality of the optimization (\ref{opt::general min}).

1) \textit{Compressed sensing:} The goal is to recover a signal $w^{\star}$ from the set of observations $y_i = \inner{a_i,w^{\star}} + \epsilon_i$ for $i=1,...,m$. Assuming that the unknown signal $w^{\star}$ is sparse, we minimize the following to recover $w^{\star}$:
$$
\min_{w \in \R^n} \frac{1}{m} \sum_{i=1}^m (y_i - \inner{a_i, w})^2  \quad\text{subject to} \quad \norm{w}_0 \leq k.
$$
In this problem, the set $\oper D$ consists of $n$ basic vectors, each of size $n$ in Euclidean space. This problem can be seen as a special case of (\ref{opt::general min}) with $f_i(w) = (y_i - \inner{a_i,w})^2$ and $M = m$. An alternative way to write the above objective function is 
$$
\frac{1}{m} \sum_{i=1}^m (y_i - \inner{a_i, w})^2  = \frac{1}{M} \sum_{j=1}^M \frac{1}{b} \left( \sum_{i=(j-1)b+1}^{jb}  (y_i - \inner{a_i,w})^2 \right),
$$ 
where $M = m/b$. Thus, we can treat each function $f_j(w)$ as $f_j(w) = \frac{1}{b} \sum_{i=(j-1)b+1}^{jb} (y_i - \inner{a_i,w})^2$. In this setting, each $f_j(w)$ accounts for a collection (or {\it block}) of observations of size $b$, rather than only one observation. This setting will be useful later for our proposed stochastic algorithms. 

2) \textit{Matrix recovery:} Given $m$ observations $y_i = \inner{A_i,W^{\star}} + \epsilon_i$ for $i=1,..,m$ where the unknown matrix $W^{\star} \in \R^{d_1 \times d_2}$ is assumed low-rank, we need to recover the original matrix $W^{\star}$. To do so, we perform the following minimization:
$$
\min_{W \in \R^{d_1 \times d_2}} \frac{1}{m} \sum_{i=1}^m (y_i - \inner{A_i, W})^2  \quad\text{subject to} \quad \rank(W) \leq k.
$$
In this problem, the set $\oper D$ consists of infinitely many unit-normed rank-one matrices and the functions $f_i(W) = (y_i - \inner{A_i, W})^2$. We can also write functions in the block form $f_i(W) = \frac{1}{b} \sum (y_i - \inner{A_i, W})^2$ as above.

3) \textit{Covariance matrix estimation:} Let $x$ be a Gaussian random vector of size $n$ with covariance matrix $W^{\star}$. The goal is to estimate $W^{\star}$ from $m$ independent copies $x_1,...,x_m$ of $x$. A useful way to find $W^{\star}$ is via solving the maximum log-likelihood function with respect to a sparse constraint on the precision matrix $\Sigma^{\star} = (W^{\star})^{-1}$. The sparsity of $\Sigma^{\star}$ encourages the independence between entries of $x$. The minimization formula is as follows:
$$
\min_{\Sigma} \frac{1}{m} \sum_{i=1}^m \inner{x_i x_i^T, \Sigma} - \log \det \Sigma \quad\text{subject to} \quad \norm{\Sigma_{\text{off}}}_0 \leq k,
$$
where $\Sigma_{\text{off}}$ is the matrix $\Sigma$ with diagonal elements set to zero. In this problem, $\oper D$ is the finite collection of unit-normed $n \times n$ matrices $\{ e_i e^*_j \}$ and the functions $f_i(\Sigma) = \inner{x_ix_i^T,\Sigma}-\frac{1}{m}\log \det \Sigma$.  

Our paper is organized as follows. In the remainder of Section \ref{sec::Intro} we discuss related work in the literature and highlight our contributions; we also describe notations used throughout the paper and assumptions employed to analyze the algorithms. We present our stochastic algorithms in Sections \ref{sec::StoIHT} and \ref{sec::StoGradMP}, where we theoretically show the linear convergence rate of the algorithms and include a detailed discussion. In Section \ref{sec::inexact StoIHT and StoGradMP}, we explore various extensions of the two proposed algorithms and also provide the theoretical result regarding the convergence rate. We apply our main theoretical results in Section \ref{sec::Estimates}, in the context of sparse linear regression and low-rank matrix recovery. In Section \ref{sec::experiments}, we demonstrate several numerical simulations to validate the efficiency of the proposed methods and compare them with existing deterministic algorithms. Our conclusions are given in Section \ref{sec::conclusion}. We reserve Section \ref{sec::proofs} for our theoretical analysis.

\subsection{Related work and our contribution}

Sparse estimation has a long history and during its development there have been many great ideas along with efficient algorithms to solve (not exactly) the optimization problem (\ref{opt::general min}). We sketch here some main lines which are by no means exhaustive.

\textbf{Convex relaxation.}  Optimization based techniques arose as a natural convex relaxation to the problem of sparse recovery (\ref{opt::general min}).  There is now a massive amount of work in the field of Compressive Sensing and statistics \cite{candes2006compressive,CSwebpage} that demonstrates these methods can accurately recover sparse signals from a small number of noisy linear measurements.  Given noisy measurements $y = Aw^{\star} + e$, one can solve the $\ell_1$-minimization problem
$$
\hat{w} = \argmin_w \|w\|_1 \quad\text{such that}\quad \|Aw - y\|_2 \leq \varepsilon,
$$
where $\varepsilon$ is an upper bound on the noise $\|e\|_2 \leq \varepsilon$.  Cand\`es, Romberg and Tao~\cite{CT_CSdecoding_2005_J,CRT_Stability_2006a_J} prove that under a deterministic condition on the matrix $A$, this method accurately recovers the signal,
\begin{equation}\label{recov}
\|w^\star - \hat{w}\|_2 \leq \varepsilon + \frac{\|w^\star - w^\star_k\|_2}{\sqrt{k}},
\end{equation}
where $w^\star_k$ denotes the $k$ largest entries in magnitude of the signal $w^\star$.  The deterministic condition is called the Restricted Isometry Property (RIP)~\cite{CT_CSdecoding_2005_J} and requires that the matrix $A$ behave nicely on sparse vectors:
$$
(1-\delta)\|x\|_2^2 \leq \|Ax\|_2^2 \leq (1+\delta)\|x\|_2^2 \quad\text{for all $k$-sparse vectors $x$},
$$
for some small enough $\delta < 1$. 

The convex approach is also extended beyond the quadratic objectives. In particular, the convex relaxation of the optimization (\ref{opt::general min}) is the following:
\begin{equation}
\label{opt::convex relaxation}
\min_w F(w) + \lambda \norm{w},
\end{equation}
where the regularization $\norm{w}$ is used to promote sparsity, for instance, it can be the $\ell_1$ norm (vector case) or the nuclear norm (matrix case). Many methods have been developed to solve these problems including interior point methods and other first-order iterative methods such as (proximal) gradient descent and coordinate gradient descent~(e.g. \cite{kim2007interior,FNW07:Gradient-Projection,DDM04:Iterative-Thresholding}). The theoretical analyses of these algorithms have also been studied with either linear or sublinear rate of convergence, depending on the assumption imposed on the function $F(w)$. In particular, sublinear convergence rate is obtained if $F(w)$ exhibits a convex and smooth function, whereas the linear convergence rate is achieved when $F(w)$ is the smooth and strongly convex function. For problems such as compressed sensing, although the loss function $F(w)$ does not possess the strong convexity property, experiments still show the linear convergence behavior of the gradient descent method. In the recent work \cite{ANW_2012_J}, the authors develop theory to explain this behavior. They prove that as long as the function $F(w)$ obeys the restricted strong convexity and restricted smoothness, a property similar to the RIP, then gradient descent algorithm can obtain the linear rate.

\textbf{Greedy pursuits.} More in line with our work are greedy approaches. These algorithms reconstruct the signal by identifying elements of the support iteratively.  Once an accurate support set is located, a simple least-squares problem recovers the signal accurately.  Greedy algorithms like Orthogonal Matching Pursuit (OMP) \cite{Tropp_OMP_2004_J} and Regularized OMP (ROMP) \cite{NV_2010_J} offer a much faster runtime than the convex relaxation approaches but lack comparable strong recovery guarantees. Recent work on greedy methods like Compressive Sampling Matching Pursuit (CoSaMP) and Iterative Hard Thresholding (IHT) offer both the advantage of a fast runtime and essentially the same recovery guarantees as~\eqref{recov}~(e.g. \cite{NT_CoSaMP_2010_J,BD_IHT_2009_J,zhang2011sparse,Fourcard_2012_RIP}). However, these algorithms are only applied for problems in compressed sensing where the least square loss is used to measure the discrepancy. There certainly exists many loss functions that are commonly used in statistical machine learning and do not exhibit quadratic structure such as log-likelihood loss. Therefore, it is necessary to develop efficient algorithms to solve (\ref{opt::general min}).

There are several methods proposed to solve special instances of (\ref{opt::general min}). \cite{SSZ_2010_J} and \cite{SGS_2011_C} propose the forward selection method for sparse vector and low-rank matrix recovery. The method selects each nonzero entry or each rank-one matrix in an iterative fashion. \cite{YY_2012_C} generalizes this algorithm to the more general dictionary $\oper D$. \cite{Zhang_2011_J} proposes the forward-backward method in which an atom can be added or removed from the set, depending on how much it contributes to decrease the loss function. \cite{JJR_2011_C} extends this algorithms beyond the quadratic loss studied in \cite{Zhang_2011_J}. \cite{BRB_GraSP_2013_J} extends the CoSaMP algorithm for a more general loss function. Very recently, \cite{NCT_gradMP_2013_J} further generalizes CoSaMP and proposes the Gradient Matching Pursuit (GradMP) algorithm to solve (\ref{opt::general min}). This is perhaps the first greedy algorithm for (\ref{opt::general min}) - the very general form of sparse recovery. They show that under a restricted convexity assumption of the objective function, the algorithm linearly converges to the optimal solution. This desirable property is also possessed by CoSaMP. We note that there are other algorithms having also been extended to the setting of sparsity in arbitrary $\oper D$ but only limited to the quadratic loss setting, see e.g.~\cite{davenport2012signal,giryes2012rip,giryes2014greedy,giryes2013greedy}.

We outline the GradMP method here, since it will be used as motivation for the work we propose.  GradMP~\cite{NCT_gradMP_2013_J} is a generalization of the CoSaMP~\cite{NT_CoSaMP_2010_J} that solves a wider class of sparse reconstruction problems.  Like OMP, these methods consist of four main steps:
i) form a signal proxy, ii) select a set of large entries of the proxy, iii) use those as the support estimation and estimate the signal via least-squares, and iv) prune the estimation and repeat.  Methods like OMP and CoSaMP use the proxy $A^*(y-Aw^t)$; more general methods like GradMP use the gradient $\nabla F(w^t)$ (see~\cite{NCT_gradMP_2013_J} for details).  The analysis of GradMP depends on the restricted strong convexity and restricted strong smoothness properties as in Definitions~\ref{def:rsc} and~\ref{def:rss} below, the first of which is motivated by a similar property introduced in~\cite{NRWY_2010_C}.  Under these assumptions, the authors prove linear convergence to the noise floor.

The IHT, another algorithm that motivates our work, is a simple method that begins with an estimation $w^0 = 0$ and computes the next estimation using the recursion
$$
w^{t+1} = H_k(w^t + A^*(y - Aw^t)),
$$ 
where $H_k$ is the thresholding operator that sets all but the largest (in magnitude) $k$ coefficients of its argument to zero.  Blumensath and Davies~\cite{BD_IHT_2009_J} prove that under the RIP, IHT provides a recovery bound comparable to~\eqref{recov}. \cite{JMD_SVP_2010_C} extends IHT to the matrix recovery. Very recently, \cite{YLZ_2014_C} proposes the Gradient Hard Thresholding Pursuit (GraHTP), an extension of IHT to solve a special vector case of (\ref{opt::general min}).

\textbf{Stochastic convex optimization.}
Methods for stochastic convex optimization have been developed in a very related but somewhat independent large body of work.  We discuss only a few here which motivated our work, and refer the reader to e.g.~\cite{spall2005introduction,boyd2004convex} for a more complete survey.  Stochastic Gradient Descent (SGD) aims to minimize a convex objective function using unbiased stochastic gradient estimates, typically of the form $\nabla f_i(w)$ where $i$ is chosen stochastically. For the optimization (\ref{opt::general min}) with no constraint, this can be summarized concisely by the update rule
$$
w^{t+1} = w^t - \alpha \nabla f_i(w^t),
$$
for some step size $\alpha$. For smooth objective functions $F(w)$, classical results demonstrate a $1/t$ convergence rate with respect to the objective difference $F(w^t) - F(w^\star)$.  In the strongly convex case, Bach and Moulines \cite{bach2011} improve this convergence to a linear rate, depending on the average squared condition number of the system.  Recently, Needell et. al. draw on connections to the Kaczmarz method (see~\cite{Kac37:Angenaeherte-Aufloesung,SV09:Randomized-Kaczmarz} and references therein), and improve this to a linear dependence on the uniform condition number~\cite{needell2013stochastic}. Another line of work is the Stochastic Coordinate Descent (SCD) beginning with the work of \cite{Nesterov_SCD_2012_J}. Extension to minimization of composite functions in (\ref{opt::convex relaxation}) is described in \cite{RT_BSCD_2011_J}.

\textbf{Contribution.}  
In this paper, we exploit ideas from IHT \cite{BD_IHT_2009_J}, CoSaMP \cite{NT_CoSaMP_2010_J} and GradMP \cite{NCT_gradMP_2013_J} as well as the recent results in stochastic optimization~\cite{SV09:Randomized-Kaczmarz,needell2013stochastic}, and propose two new algorithms to solve (\ref{opt::general min}). The IHT and CoSaMP algorithms have been remarkably popular in the signal processing community due to their simplicity and computational efficiency in recovering sparse signals from incomplete linear measurements. However, these algorithms are mostly used to solve problems in which the objective function is quadratic and it would be beneficial to extend the algorithmic ideas to the more general objective function.   

We propose in this paper stochastic versions of the IHT and GradMP algorithms, which we term Stochastic IHT (StoIHT) and Stochastic GradMP (StoGradMP). These algorithms possess favorable properties toward large scale problems: 
\begin{itemize}
	\item The algorithms do not need to compute the full gradient of $F(w)$. Instead, at each iteration, they only sample one index $i \in [M]=\{1,2,...,M \}$ and compute its associated gradient of $f_i(w)$. This property is particularly efficient in large scale settings in which the gradient computation is often prohibitively expensive.
	\item The algorithms do not need to perform an optimal projection at each iteration as required by the IHT and CoSaMP. Approximated projection is generally sufficient to guarantee linear convergence while the algorithms enjoy significant computational improvement.
	\item Under the restricted strong convexity assumption of $F(w)$ and the restricted strong smoothness assumption of $f_i(w)$ (defined below), the two proposed algorithms are guaranteed to converge linearly to the optimal solution.
	\item The algorithms and proofs can be extended further to consider other variants such as inexact gradient computations and inexact estimation. 
\end{itemize}   

\subsection{Notations and assumptions}

\noindent \textbf{Notation:} For a set $\Omega$, let $|\Omega|$ denote its cardinality and $\Omega^c$ denote its complement. We will write $D$ as the matrix whose columns consist of elements of $\oper D$, and denote $D_\Omega$ as the submatrix obtained by extracting the columns of $D$ corresponding to the indices in $\Omega$. We denote by $\oper R(D_{\Omega})$ the space spanned by columns of the matrix $D_{\Omega}$. Also denote by $\oper P_{\Omega} w$ the orthogonal projection of $w$ onto $\oper R(D_{\Omega})$. Given a vector $w \in \R^n$ that can decomposed as $w = \sum_{i \in \Omega} \alpha_i d_i$, we say that the support of $w$ with respect to $\oper D$ is $\Omega$, denoted by $\supp_{\oper D}(w) = \Omega$. We denote by $[M]$ the set $\{1,2,...,M \}$. We also define $\E_i$ as the expectation with respect to $i$ where $i$ is drawn randomly from the set $[M]$. For a matrix $A$, we use conventional notations: $\norm{A}$ and $\norm{A}_F$ are the spectral norm and Frobenius norms of the matrix $A$. For the linear operator $\oper A: W \in \R^{n_1 \times n_2} \rightarrow y\in \R^m$, $\oper A^*: y \in \R^m \rightarrow W \in \R^{n_1 \times n_2}$ is the transpose of $\oper A$. 

Denote $F(w) \triangleq \frac{1}{M}\sum_{i=1}^M f_i (w)$ and let $p(1), ..., p(M)$ be the probability distribution of an index $i$ selected at random from the set $[M]$. Note that $\sum_{i=1}^M p(i) =1$. Another important observation is that if we select an index $i$ from the set $[M]$ with probability $p(i)$, then
\begin{equation}
\E_i \frac{1}{Mp(i)}  f_i(w) = F(w) \quad\text{and} \quad \E_i \frac{1}{Mp(i)} \nabla f_i(w) = \nabla F(w),
\end{equation}
where the expectation is with respect to the index $i$.

Define $\text{approx}_k (w,\eta)$ as the operator that constructs a set $\Gamma$ of cardinality $k$ such that 
\begin{equation}
\label{eqt::approximation definition}
\norm{\oper P_{\Gamma}w- w}_2 \leq \eta\norm{w - w_k}_2,
\end{equation}
where $w_k$ is the best $k$-sparse approximation of $w$ with respect to the dictionary $\oper D$, that is, $w_k = \argmin_{y \in D_{\Gamma}, |\Gamma| \leq k} \norm{w-y}_2$. Put another way, denote
$$
\Gamma^* = \argmin_{|\Gamma| \leq k} \norm{w - \oper P_{\Gamma} w}_2.
$$
Then, we require that
\begin{equation}
\label{eqt::approximation 1st requirement}
\norm{w - \oper P_{\Gamma} w}_2 \leq \eta \norm{w - \oper P_{\Gamma^*} w}_2.
\end{equation}
An immediate consequence is the following inequality:
\begin{equation}
\label{eqt::approximation consequence}
\norm{w - \oper P_{\Gamma} w}_2 \leq \eta \norm{w - \oper P_{R} w}_2.
\end{equation}
for 
any set of $|R| \leq k$ atoms of $\oper D$. This follows because $\norm{w - \oper P_{\Gamma^*} w}_2 \leq \norm{w - \oper P_{R} w}_2$. In addition, taking the square on both sides of the above inequality and manipulating yields
$$
\norm{\oper P_R w}_2^2 \leq \frac{1}{\eta^2} \norm{\oper P_{\Gamma} w}_2^2 + \frac{\eta^2-1}{\eta^2} \norm{w}^2_2 = \norm{\oper P_{\Gamma} w}_2^2 + \frac{\eta^2-1}{\eta^2} \norm{\oper P_{\Gamma^c} w}^2_2.
$$
Taking the square root gives us an important inequality for our analysis later. For any set of $|R| \leq k$ atoms of $\oper D$,
\begin{equation}
\label{eqt::approximation consequence 2}
\norm{\oper P_R w}_2 \leq \norm{\oper P_{\Gamma} w}_2 + \sqrt{\frac{\eta^2-1}{\eta^2}} \norm{\oper P_{\Gamma^c} w}_2.
\end{equation}

\vspace{3mm}
\noindent \textbf{Assumptions:}
Before describing the two algorithms in the next section, we provide assumptions for the functions $f_i(w)$ as well as $F(w)$. The first assumption requires that $F(w)$ is restricted strongly convex with respect to the set $\oper D$. Although we do not require $F(w)$ to be globally convex, it is necessary that $F(w)$ is convex in certain directions to guarantees the linear convergence of our proposed algorithms. The intuition is that our greedy algorithms only drive along certain directions and seek for the optimal solution. Thus, a global convexity assumption is not necessary.

\begin{define} [$\oper D$-restricted strong convexity ($\oper D$-RSC)]\label{def:rsc}
The function $F(w)$ satisfies the $\oper D$-RSC if there exists a positive constant $\rho^-_{k}$ such that
\begin{equation}
\label{eqt::D-RSC}
F(w') - F(w) - \inner{\nabla F(w), w' - w} \geq \frac{\rho^-_k}{2} \norm{w'-w}_2^2,
\end{equation}
for all vectors $w$ and $w'$ of size $n$ such that $|\supp_{\oper D}(w) \cup \supp_{\oper D}(w')| \leq k$.
\end{define}
We notice that the left-hand side of the above inequality relates to the Hessian matrix of $F(w)$ (provided $F(w)$ is smooth) and the assumption essentially implies the positive definiteness of the $k \times k$ Hessian submatrices. We emphasize that this assumption is much weaker than the strong convexity assumption imposed on the full $n$ dimensional space where the latter assumption implies the positive definiteness of the full Hessian matrix. In fact, when $k=n$, $F(w)$ exhibits a strongly convex function with parameter $\rho^-_{k}$, and when $\rho^-_{k} = 0$, $F(w)$ is a convex function. We also highlight that the $\oper D$-RSC assumption is particularly relevant when studying statistical estimation problems in the high-dimensional setting. In this setting, the number of observations is often much less than the dimension of the model parameter and therefore, the Hessian matrix of the loss function $F(w)$ used to measure the data fidelity is highly ill-posed.

\vspace{2mm}
\noindent In addition, we require that $f_i(w)$ satisfies the so-called $\oper D$-restricted strong smoothness  which is defined as follows: 
\begin{define} [$\oper D$-restricted strong smoothness ($\oper D$-RSS)]\label{def:rss} The function $f_i(w)$ satisfies the $\oper D$-RSS if there exists a positive constant $\rho^+_{k}(i)$ such that
\begin{equation}
\norm{\nabla f_i(w') - \nabla f_i(w)}_2 \leq \rho^+_{k} (i) \norm{w'-w}_2
\end{equation}
for all vectors $w$ and $w'$ of size $n$ such that $|\supp_{\oper D}(w) \cup \supp_{\oper D}(w')| \leq k$.
\end{define}

Variants of these two assumptions have been used to study the convergence of the projected gradient descent algorithm \cite{ANW_2012_J}. In fact, the names restricted strong convexity and restricted strong smoothness are adopted from \cite{ANW_2012_J}.

In this paper, we assume that the functions $f_i(w)$ satisfy $\oper D$-RSS with constants $\rho^+_k(i)$ for all $i = 1,...,M$ and $F(w)$ satisfies $\oper D$-RSC with constant $\rho^-_k$. The following quantities will be used extensively throughout the paper:
\begin{equation}
\label{eqt::parameters}
\alpha_{k} \triangleq \max_i \frac{\rho^+_{k}(i)}{Mp(i)}, \quad \rho^+_{k} \triangleq \max_i \rho^+_{k}(i), \quad \text{and} \quad\overline{\rho}^+_k \triangleq \frac{1}{M} \sum_{i=1}^M \rho^+_k(i).
\end{equation}

\section{Stochastic Iterative Hard Thresholding (StoIHT)}
\label{sec::StoIHT}

In this section, we describe the Stochastic Iterative Hard Thresholding (StoIHT) algorithm to solve (\ref{opt::general min}). The algorithm is provided in Algorithm \ref{alg:StoIHT}. At each iteration, the algorithm performs the following standard steps: 
\begin{itemize}
	\item Select an index $i$ from the set $[M]$ with probability $p(i)$.
	\item Compute the gradient associated with the index just selected and move the solution along the gradient direction.
	\item Project the solution onto the constraint space via the approx operator defined in (\ref{eqt::approximation definition}). 
\end{itemize}
Ideally, we would like to compute the exact projection onto the constraint space or equivalently the best $k$-sparse approximation of $b^t$ with respect to $\oper D$. However, the exact projection is often hard to evaluate or is computationally expensive in many problems. Take an example of the large scale matrix recovery problem, where computing the best matrix approximation would require an intensive Singular Value Decomposition (SVD) which often costs $\oper O(k m n)$, where $m$ and $n$ are the matrix dimensions. On the other hand, recent linear algebraic advances allow computing an approximate SVD in only $\oper O(k^2 \max\{m,n\})$. Thus, approximate projections could have a significant computational gain in each iteration. Of course, the price paid for fast approximate projections is a slower convergence rate.  In Theorem \ref{thm::StoIHT} we will show this trade-off.

\begin{algorithm}[ht]
\caption{StoIHT algorithm} 
\label{alg:StoIHT}
\begin{algorithmic}
\STATE \textbf{input:} $k$, $\gamma$, $\eta$, $p(i)$, and stopping criterion
\STATE \textbf{initialize:} $w^0$ and $t=0$
\REPEAT
\STATE
\begin{tabular}{ll}
\textbf{randomize:} & select an index $i_t$ from $[M]$ with probability $p(i_t)$ \\
\textbf{proxy:} & $b^t = w^t - \frac{\gamma}{Mp(i_t)}  \nabla f_{i_t} (w^t)$ \\
\textbf{identify:} & $\Gamma^t = \text{approx}_k (b^t, \eta)$ \\
\textbf{estimate:} & $w^{t+1} = \oper P_{\Gamma^t} (b^t)$ \\
\ & $t = t+1$	
\end{tabular}
\UNTIL{halting criterion \textit{true}}
\STATE \textbf{output:} $\hat{w} = w^t$
\end{algorithmic}
\end{algorithm}

Denote $w^{\star}$ as a feasible solution of (\ref{opt::general min}). Our main result provides the convergence rate of the StoIHT algorithm via characterizing the $\ell_2$-norm error of $t$-th iterate $w^t$ with respect to $w^{\star}$. We first define some quantities necessary for a precise statement of the theorem. First, we denote the \textit{contraction coefficient}
\begin{equation}
\label{eqt::kappa of StoIHT}
\kappa \triangleq 2 \sqrt{\left(1- \gamma (2 - \gamma \alpha_{3k}) \rho^-_{3k} \right)} + \sqrt{(\eta^2-1)\left(1 + \gamma^2 \alpha_{3k} \overline{\rho}^+_{3k} - 2\gamma \rho^-_{3k}  \right)},
\end{equation} 
where the quantities $\alpha_{3k}$, $\overline{\rho}^+_{3k}$, $\rho^-_{3k}$ and $\eta$ are defined in (\ref{eqt::parameters}), (\ref{eqt::D-RSC}), and  (\ref{eqt::approximation definition}). As will become clear later, the contraction coefficient $\kappa$ controls the algorithm's rate of convergence and is required to be less than unity. This $\kappa$ is intuitively dependent on the characteristics of the objective function (via $\oper D$-RSC and $\oper D$-RSS constants $\rho^+_{3k}$ and $\rho^-_{3k}$), the user-defined step size, the probability distribution, and the approximation error. 
The price paid for allowing a larger approximation error $\eta$ is a slower convergence rate, since $\kappa$ will also become large; however, $\eta$ should not be allowed too large since $\kappa$ must still be less than one.

We also define the \textit{tolerance parameter}
\begin{equation}
\label{eqt::sigma of StoIHT}
\sigma_{w^{\star}} \triangleq \frac{\gamma}{\min_i Mp(i)} \left(2  \E_i \max_{|\Omega|\leq 3k} \norm{\oper P_{\Omega} \nabla f_i(w^{\star})}_2 + \sqrt{\eta^2 - 1} \E_i\norm{\nabla f_i(w^{\star})}_2\right),
\end{equation}
where $i$ is an index selected from $[M]$ with probability $p(i)$.  Of course when $w^\star$ minimizes all components $f_i$, we have $\sigma_{w^{\star}}=0$, and otherwise $\sigma_{w^{\star}}$ measures (a modified version) of the usual noise variance in stochastic optimization. 

\vspace{3mm}
In terms of these two ingredients, we now state our first main result. The proof is deferred to Section \ref{sub::proof of theorem 1}.

\begin{thm}
\label{thm::StoIHT}
Let $w^{\star}$ be a feasible solution of (\ref{opt::general min}) and $w^0$ be the initial solution. At the $(t+1)$-th iteration of Algorithm \ref{alg:StoIHT}, the expectation of the recovery error is bounded by
\begin{equation}
\E \norm{w^{t+1} - w^{\star}}_2 \leq \kappa^{t+1} \norm{w^0 - w^{\star}}_2  + \frac{\sigma_{w^{\star}} }{(1-\kappa)} 
\end{equation}
where $\sigma_{w^\star}$ is defined by~\eqref{eqt::sigma of StoIHT}, $\kappa$ is defined by~\eqref{eqt::kappa of StoIHT} and is assumed to be strictly less than unity, and expectation is taken over all choices of random variables $i_0,...,i_t$.
\end{thm}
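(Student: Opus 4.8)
The plan is the standard two-layer argument: establish a one-step contraction bound in conditional expectation, then unroll it. Throughout, fix an iteration $t$ and condition on $i_0,\dots,i_{t-1}$ so that $w^t$ is deterministic and only $i_t$ is random; write $\E_{i_t}$ for the conditional expectation over $i_t$ and set $v:=w^t-w^\star$.

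First I would reduce the update to a $3k$-restricted gradient step. Put $\Omega:=\supp_{\oper D}(w^t)\cup\Gamma^t\cup\supp_{\oper D}(w^\star)$, so $\abs{\Omega}\leq 3k$ and $v$, $w^{t+1}-w^\star$ both lie in $\oper R(D_\Omega)$. With $c^t:=\oper P_\Omega b^t$ one has $c^t=w^t-\frac{\gamma}{Mp(i_t)}\oper P_\Omega\nabla f_{i_t}(w^t)$ (since $w^t\in\oper R(D_\Omega)$) and $w^{t+1}=\oper P_{\Gamma^t}b^t=\oper P_{\Gamma^t}c^t$ (since $\Gamma^t\subseteq\Omega$). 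Writing $\norm{w^{t+1}-w^\star}_2\leq\norm{w^{t+1}-c^t}_2+\norm{c^t-w^\star}_2$, I would bound the first term by combining three elementary facts: the orthogonal splitting $b^t-\oper P_{\Gamma^t}b^t=(b^t-c^t)\oplus(c^t-w^{t+1})$, giving $\norm{b^t-\oper P_{\Gamma^t}b^t}_2^2=\norm{b^t-c^t}_2^2+\norm{c^t-w^{t+1}}_2^2$; the defining inequality of $\text{approx}_k$ used with the competitor set $\supp_{\oper D}(w^\star)$, giving $\norm{b^t-\oper P_{\Gamma^t}b^t}_2\leq\eta\norm{b^t-w^\star}_2$; and $\norm{b^t-w^\star}_2^2=\norm{b^t-c^t}_2^2+\norm{c^t-w^\star}_2^2$ (since $w^\star\in\oper R(D_\Omega)$). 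These combine to $\norm{w^{t+1}-c^t}_2^2\leq\norm{c^t-w^\star}_2^2+(\eta^2-1)\norm{b^t-w^\star}_2^2$, whence
\[
\norm{w^{t+1}-w^\star}_2\ \leq\ 2\,\norm{c^t-w^\star}_2+\sqrt{\eta^2-1}\,\norm{b^t-w^\star}_2 .
\]

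Next I would estimate $\E_{i_t}$ of the two proxy errors $c^t-w^\star=v-\frac{\gamma}{Mp(i_t)}\oper P_\Omega\nabla f_{i_t}(w^t)$ and $b^t-w^\star=v-\frac{\gamma}{Mp(i_t)}\nabla f_{i_t}(w^t)$, which are handled identically. In each, add and subtract $\nabla f_{i_t}(w^\star)$ inside the gradient. The leftover pieces $\frac{\gamma}{Mp(i_t)}\oper P_\Omega\nabla f_{i_t}(w^\star)$ and $\frac{\gamma}{Mp(i_t)}\nabla f_{i_t}(w^\star)$, bounded using $Mp(i_t)\geq\min_i Mp(i)$ and $\norm{\oper P_\Omega\nabla f_{i_t}(w^\star)}_2\leq\max_{\abs{\Omega}\leq 3k}\norm{\oper P_\Omega\nabla f_{i_t}(w^\star)}_2$, reassemble after $\E_{i_t}$ — together with the coefficients $2$ and $\sqrt{\eta^2-1}$ from the previous display — into exactly $\sigma_{w^\star}$ of \eqref{eqt::sigma of StoIHT}. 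For the remaining curvature term, writing $\Delta_{i}:=\nabla f_{i}(w^t)-\nabla f_{i}(w^\star)$, I would expand the squared norm,
\[
\norm{v-\tfrac{\gamma}{Mp(i_t)}\oper P_\Omega\Delta_{i_t}}_2^2=\norm{v}_2^2-\tfrac{2\gamma}{Mp(i_t)}\inner{v,\Delta_{i_t}}+\tfrac{\gamma^2}{(Mp(i_t))^2}\norm{\oper P_\Omega\Delta_{i_t}}_2^2,
\]
dropping $\oper P_\Omega$ in the inner product since $v\in\oper R(D_\Omega)$. Taking $\E_{i_t}$: the unbiasedness identity $\E_i\frac{1}{Mp(i)}\nabla f_i=\nabla F$ makes the cross term $\inner{v,\nabla F(w^t)-\nabla F(w^\star)}\geq\rho^-_{3k}\norm{v}_2^2$ (add the $\oper D$-RSC inequality at $(w^t,w^\star)$ to the one at $(w^\star,w^t)$); the $\oper D$-RSS of $f_{i_t}$ gives $\norm{\oper P_\Omega\Delta_{i_t}}_2\leq\norm{\Delta_{i_t}}_2\leq\rho^+_{3k}(i_t)\norm{v}_2$, and then $\frac{\rho^+_{3k}(i)}{Mp(i)}\leq\alpha_{3k}$ together with $\E_i\frac{\rho^+_{3k}(i)}{Mp(i)}=\overline{\rho}^+_{3k}$ turns the expectation of the last term into at most $\gamma^2\alpha_{3k}\overline{\rho}^+_{3k}\norm{v}_2^2$. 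Jensen's inequality ($\E\norm{\cdot}\leq(\E\norm{\cdot}^2)^{1/2}$) then gives each proxy a contraction factor of the form $(1-2\gamma\rho^-_{3k}+\gamma^2\alpha_{3k}\overline{\rho}^+_{3k})^{1/2}$; feeding this back into the displayed bound produces $\E_{i_t}\norm{w^{t+1}-w^\star}_2\leq\kappa\norm{v}_2+\sigma_{w^\star}$ with $\kappa$ as in \eqref{eqt::kappa of StoIHT} (the slightly sharper $\rho^-_{3k}$ that appears inside the leading $2\sqrt{\cdot\,}$ of $\kappa$ requires estimating the restricted proxy $\oper P_\Omega\Delta_{i_t}$ with a little more care, but this is inessential to linear convergence).

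Finally, taking total expectation over $i_0,\dots,i_t$, iterating the one-step inequality, and summing $\sum_{j\geq 0}\kappa^j=(1-\kappa)^{-1}$ (valid since $\kappa<1$) yields $\E\norm{w^{t+1}-w^\star}_2\leq\kappa^{t+1}\norm{w^0-w^\star}_2+\sigma_{w^\star}/(1-\kappa)$, which is the claim. I expect the first step to be the main obstacle: one must absorb the approximate-projection slack ($\eta>1$) while keeping every vector supported on the single $3k$-set $\Omega$, because the expectation argument only closes when $v\in\oper R(D_\Omega)$ — that is exactly what lets $\oper P_\Omega\nabla f_i$ be replaced by $\nabla F$ under $\E_{i_t}$. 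A secondary difficulty is that the $f_i$ need not be convex, so the $\gamma^2$ term can only be controlled through the Lipschitz-gradient form of $\oper D$-RSS, which is what forces $\alpha_{3k}$ and $\overline{\rho}^+_{3k}$ into $\kappa$ and the implicit step-size restriction $\gamma\alpha_{3k}<2$ hidden in $\kappa<1$.
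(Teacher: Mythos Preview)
Your overall structure matches the paper's proof: the intermediate bound $\norm{w^{t+1}-w^\star}_2\leq 2\norm{c^t-w^\star}_2+\sqrt{\eta^2-1}\,\norm{b^t-w^\star}_2$ that you derive via the orthogonal decomposition through $c^t=\oper P_\Omega b^t$ is exactly what the paper obtains by expanding $\norm{w^{t+1}-b^t}_2^2\leq\eta^2\norm{w^\star-b^t}_2^2$ and solving the resulting quadratic in $\norm{w^{t+1}-w^\star}_2$. The separation of the $\nabla f_{i_t}(w^\star)$ terms into $\sigma_{w^\star}$, the conditioning/unrolling, and the treatment of the unprojected proxy $b^t-w^\star$ are all the same.

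The gap is the one you flag but underestimate. The leading factor in $\kappa$ is $2\sqrt{1-\gamma(2-\gamma\alpha_{3k})\rho^-_{3k}}$, with $\rho^-_{3k}$ (not $\overline\rho^+_{3k}$) multiplying $\gamma^2$, and your direct bound $\norm{\oper P_\Omega\Delta_i}_2\leq\rho^+_{3k}(i)\norm{v}_2$ cannot reach it. The paper's device is a \emph{co-coercivity} inequality derived from $\oper D$-RSS (their Corollary~\ref{cor::consequences of RSS}):
\[
\norm{\oper P_\Omega(\nabla f_i(w')-\nabla f_i(w))}_2^2\ \leq\ \rho^+_{3k}(i)\,\inner{w'-w,\nabla f_i(w')-\nabla f_i(w)}.
\]
Dividing by $(Mp(i))^2$, pulling out $\max_i\rho^+_{3k}(i)/(Mp(i))=\alpha_{3k}$, and taking $\E_{i_t}$ converts the $\gamma^2$ term into $\gamma^2\alpha_{3k}\inner{v,\nabla F(w^t)-\nabla F(w^\star)}$, which merges with the $-2\gamma$ cross term \emph{before} $\oper D$-RSC is applied; that is precisely how $\rho^-_{3k}$ ends up in the quadratic coefficient. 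Your closing remark that ``the $f_i$ need not be convex, so the $\gamma^2$ term can only be controlled through the Lipschitz-gradient form of $\oper D$-RSS'' is thus exactly where you miss the key lemma: co-coercivity is what the paper uses, and without it you prove linear convergence with a strictly larger contraction coefficient, not the theorem as stated.
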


The theorem demonstrates a linear convergence for the StoIHT even though the full gradient computation is not available. This is a significant computational advantage in large-scale settings where computing the full gradient often requires performing matrix multiplications with matrix dimensions in the millions. In addition, a stochastic approach may also gain advantages from parallel implementation. We emphasize that the result of Theorem \ref{thm::StoIHT} holds for any feasible solution $w^{\star}$ and the error of the $(t+1)$-th iterate is mainly governed by the second term involving the gradient of $\{ f_i(w^{\star}) \}_{i=1,...,M}$. For certain optimization problems, we expect that the energy of these gradients associated with the global optimum is small. For statistical estimation problems, the gradient of the true model parameter often involves only the statistical noise, which is small. Thus, after a sufficient number of iterations, the error between $w^{t+1}$ and the true statistical parameter is only controlled by the model noise.

The result is significantly simpler when the optimal projection is available at each iteration. That is, the algorithm is always able to find the set $\Gamma^t$ such that $w^{t+1}$ is the best $k$-sparse approximation of $b^t$. In this case, $\eta = 1$ and the contraction coefficient $\kappa$ in (\ref{eqt::kappa of StoIHT}) is simplified to
$$
\kappa =  2 \sqrt{\left(1- \gamma (2 - \gamma \alpha_{3k}) \rho^-_{3k} \right)},
$$
with $\alpha_{3k} = \max_i \frac{\rho^+_{3k}(i)}{Mp(i)}$ and $\sigma_{w^{\star}} = \frac{2\gamma}{\min_i Mp(i)}  \E_i \max_{|\Omega| \leq 3k} \norm{\oper P_{\Omega} \nabla f_i(w^{\star})}_2$. In order for $\kappa < 1$, we need $\rho^-_{3k} \geq \frac{3}{4} \alpha_{3k} = \frac{3}{4} \max_i \frac{\rho^+_{3k}(i)}{Mp(i)}$ and 
$$
\gamma < \frac{1 + \sqrt{1 - \frac{3\alpha_{3k}}{4\rho^-_{3k}}}}{\alpha_{3k}}.
$$

The following corollary provides an interesting particular choice of the parameters in which Theorem \ref{thm::StoIHT} is easier to access.

\begin{cor}
Suppose that $\rho^-_{3k} \geq \frac{3}{4}\rho^+_{3k}$.  
Select $\gamma = \frac{1}{\alpha_{3k}}$,   
$\eta = 1$ and the probability distribution $p(i)=\frac{1}{M}$ for all $i=1,...,M$. Then using the quantities defined by~\eqref{eqt::parameters},  
$$
\E \norm{w^{t+1}-w^{\star}}_2 \leq \kappa^{t+1} \norm{w^0-w^{\star}}_2 + \frac{2\gamma}{(1-\kappa)\min_i Mp(i)} \E_i \max_{|\Omega| \leq 3k}\norm{\oper P_{\Omega} \nabla f_i(w^{\star})}_2,
$$
where $\kappa = 2\sqrt{1-\frac{\rho^-_{3k}}{\rho^+_{3k}} }$.
\end{cor}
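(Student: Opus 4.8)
The plan is simply to specialize Theorem~\ref{thm::StoIHT} to the stated parameter choices; no new argument is needed beyond bookkeeping. First I would record the consequences of taking $p(i)=\frac1M$ for all $i$: then $Mp(i)=1$ for every $i$, so the definitions in~\eqref{eqt::parameters} give $\alpha_{3k}=\max_i \rho^+_{3k}(i)=\rho^+_{3k}$ and $\min_i Mp(i)=1$. With the choice $\gamma=\frac{1}{\alpha_{3k}}=\frac{1}{\rho^+_{3k}}$ this yields the two identities $\gamma\alpha_{3k}=1$ and $\gamma\rho^-_{3k}=\rho^-_{3k}/\rho^+_{3k}$, which are all that is needed to collapse the contraction coefficient.

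Next I would substitute $\eta=1$. In~\eqref{eqt::kappa of StoIHT} the term $\sqrt{(\eta^2-1)(\cdots)}$ vanishes, leaving $\kappa=2\sqrt{1-\gamma(2-\gamma\alpha_{3k})\rho^-_{3k}}$; using $\gamma\alpha_{3k}=1$ the bracket $2-\gamma\alpha_{3k}$ equals $1$, so $\kappa=2\sqrt{1-\gamma\rho^-_{3k}}=2\sqrt{1-\rho^-_{3k}/\rho^+_{3k}}$, exactly as claimed. Likewise, in~\eqref{eqt::sigma of StoIHT} the term $\sqrt{\eta^2-1}\,\E_i\norm{\nabla f_i(w^{\star})}_2$ drops out and $\min_i Mp(i)=1$, so $\sigma_{w^{\star}}=\frac{2\gamma}{\min_i Mp(i)}\,\E_i\max_{|\Omega|\leq 3k}\norm{\oper P_{\Omega}\nabla f_i(w^{\star})}_2$, which is precisely the second term appearing in the corollary's bound.

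It then remains to verify the standing hypothesis of Theorem~\ref{thm::StoIHT} that $\kappa<1$: from $\kappa=2\sqrt{1-\rho^-_{3k}/\rho^+_{3k}}$ this is equivalent to $\rho^-_{3k}/\rho^+_{3k}>\frac34$, so the assumption $\rho^-_{3k}\geq\frac34\rho^+_{3k}$ (read strictly) is exactly what is needed, and it simultaneously guarantees that the quantity under the square root is nonnegative. Plugging the simplified $\kappa$ and $\sigma_{w^{\star}}$ into the conclusion of Theorem~\ref{thm::StoIHT} gives the displayed inequality.

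There is no real obstacle here — the proof is one page of substitution — so the only point that requires a little care is the boundary case $\rho^-_{3k}=\frac34\rho^+_{3k}$, where $\kappa=1$ and the factor $1/(1-\kappa)$ is undefined; the statement should therefore be understood with strict inequality, consistent with the assumption $\kappa<1$ already imposed in Theorem~\ref{thm::StoIHT}.
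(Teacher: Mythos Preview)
Your proposal is correct and matches the paper's approach exactly: the corollary is stated immediately after a paragraph that simplifies $\kappa$ and $\sigma_{w^\star}$ under $\eta=1$, and the paper offers no further argument beyond this substitution into Theorem~\ref{thm::StoIHT}. Your observation about the boundary case $\rho^-_{3k}=\tfrac34\rho^+_{3k}$ is a legitimate caveat that the paper glosses over.
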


When the exact projection is not available, we would want to see how big $\eta$ is such that the StoIHT still allows linear convergence. It is clear from (\ref{eqt::kappa of StoIHT}) that for a given step size $\gamma$, bigger $\eta$ leads to bigger $\kappa$, or slower convergence rate. It is required by the algorithm that $\kappa < 1$. Therefore, $\eta^2$ must at least satisfy 
\begin{equation}
\eta^2 \leq 1 + \frac{1}{1 + \gamma^2 \alpha_{3k} \overline{\rho}^+_{3k} - 2\gamma \rho^-_{3k} }.
\end{equation}
As $\gamma = \frac{1}{\rho^+_{3k}}$ and $p(i) = \frac{1}{M}$, $i=1,...,M$, the bound is simplified to $\eta^2 \leq 1 + \frac{1}{2( 1- \rho^-_{3k})}$. This bound implies that the approximation error in (\ref{eqt::approximation 1st requirement}) should be at most ($1+\epsilon$) away from the exact projection error where $\epsilon \in (0,1)$.

In Algorithm \ref{alg:StoIHT}, the projection tolerance $\eta$ is fixed during the iterations. However, there is a flexibility in changing it every iteration. The advantage of this flexibility is that this parameter can be set small during the first few iterations where the convergence is slow and gradually increased for the later iterations. Denoting the projection tolerance at the $j$-th iteration by $\eta^j$, we define the \textit{contraction coefficient} at the $j$-th iteration:
\begin{equation}
\label{eqt::kappa_t of StoIHT}
\kappa_j \triangleq 2 \sqrt{\left(1- \gamma (2 - \gamma \alpha_{3k}) \rho^-_{3k} \right)} + \sqrt{((\eta^j)^2-1)\left(1 + \gamma^2 \alpha_{3k} \overline{\rho}^+_{3k} - 2\gamma \rho^-_{3k}  \right)},
\end{equation} 
and the \textit{tolerance parameter} $\sigma_{w^{\star}} \triangleq \max_{j \in [t]} \sigma^j_{w^{\star}}$ where
\begin{equation}
\label{eqt::sigma_t of StoIHT}
\sigma^j_{w^{\star}} \triangleq \frac{\gamma}{\min_i Mp(i)} \left(2  \E_i \max_{|\Omega|\leq 3k} \norm{\oper P_{\Omega} \nabla f_i(w^{\star})}_2 + \sqrt{(\eta^j)^2 - 1} \max_i \E_i\norm{\nabla f_i(w^{\star})}_2\right).
\end{equation}

\noindent The following corollary shows the convergence of the StoIHT algorithm in the case where the projection tolerance is allowed to vary at each iteration:
\begin{cor}
At the $(t+1)$-th iteration of Algorithm \ref{alg:StoIHT}, the recovery error is bounded by
\begin{equation}
\E \norm{w^{t+1}-w^{\star}}_2 \leq \norm{w^0-w^{\star}}_2 \prod_{j=0}^{t+1} \kappa_j  + \sigma_{w^{\star}} \sum_{i=0}^t \prod_{j=t-i}^t \kappa_j,
\end{equation}
where $\kappa_j$ is defined by~\eqref{eqt::kappa_t of StoIHT}, and $\sigma_{w^{\star}} = \max_{j \in [t]} \sigma^j_{w^{\star}}$ is defined via~\eqref{eqt::sigma_t of StoIHT}.
\end{cor}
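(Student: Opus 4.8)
The plan is to recycle the proof of Theorem~\ref{thm::StoIHT} essentially verbatim, the only change being to carry the iteration index on the projection tolerance. Recall that the argument in Section~\ref{sub::proof of theorem 1} is built on a single one-step estimate: conditioned on the iterate $w^j$ (equivalently, on the realized indices $i_0,\dots,i_{j-1}$), the StoIHT update satisfies
\begin{equation*}
\E_{i_j}\!\left[\,\|w^{j+1}-w^{\star}\|_2 \;\middle|\; w^j\,\right] \;\le\; \kappa\,\|w^j-w^{\star}\|_2 \;+\; \sigma_{w^{\star}},
\end{equation*}
and the global bound of Theorem~\ref{thm::StoIHT} follows by taking total expectations and unrolling this scalar recursion. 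The key observation is that $\eta$ enters this one-step estimate \emph{only} through the consequences \eqref{eqt::approximation consequence}--\eqref{eqt::approximation consequence 2} of the $\text{approx}_k$ operator applied to the current proxy $b^j$; nothing in the proxy, identify, or estimate steps couples the tolerance used at iteration $j$ to the tolerances used at other iterations. Hence, if at iteration $j$ the algorithm uses tolerance $\eta^j$, the very same computation yields
\begin{equation*}
\E_{i_j}\!\left[\,\|w^{j+1}-w^{\star}\|_2 \;\middle|\; w^j\,\right] \;\le\; \kappa_j\,\|w^j-w^{\star}\|_2 \;+\; \sigma^j_{w^{\star}},
\end{equation*}
with $\kappa_j$ and $\sigma^j_{w^{\star}}$ exactly as in \eqref{eqt::kappa_t of StoIHT} and \eqref{eqt::sigma_t of StoIHT}.

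From here the argument is mechanical. Writing $a_j \triangleq \E\|w^j - w^{\star}\|_2$ and taking total expectation (tower property) in the displayed one-step bound gives $a_{j+1} \le \kappa_j a_j + \sigma^j_{w^{\star}}$ for every $j$. Unrolling this linear recurrence with time-varying coefficients from $j=0$ to $j=t$ produces
\begin{equation*}
a_{t+1} \;\le\; \Big(\prod_{j=0}^{t}\kappa_j\Big)\,a_0 \;+\; \sum_{j=0}^{t}\sigma^j_{w^{\star}}\prod_{l=j+1}^{t}\kappa_l .
\end{equation*}
Bounding each $\sigma^j_{w^{\star}}$ by $\sigma_{w^{\star}} = \max_{j\in[t]}\sigma^j_{w^{\star}}$, pulling it out of the sum, and reindexing the products to match the ranges in the statement gives the claimed inequality. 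If a closed form is wanted, $\kappa_j<1$ for all $j$ makes the trailing sum a finite geometric-type series bounded by $1/(1-\max_j\kappa_j)$, which recovers the shape of Theorem~\ref{thm::StoIHT} as the special case $\eta^j \equiv \eta$.

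The only genuine work — and the only place anything could go wrong — is the claim in the first paragraph that the one-step estimate of Section~\ref{sub::proof of theorem 1} survives the substitution $\eta \mapsto \eta^j$ with no other change. This amounts to checking that in that proof $\eta$ appears exclusively inside inequalities about $\text{approx}_k(b^j,\eta)$ evaluated at the \emph{single} current proxy $b^j$, and never inside a quantity frozen from an earlier iteration; given that each pass of Algorithm~\ref{alg:StoIHT} re-runs proxy/identify/estimate from scratch on $w^j$, this decoupling is immediate, but it should be stated explicitly since it is the entire content of the corollary. Everything afterward — the tower property, the time-varying scalar recursion, and the $\sigma^j_{w^{\star}}\le\sigma_{w^{\star}}$ reduction — is routine.
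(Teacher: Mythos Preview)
Your proposal is correct and follows exactly the approach the paper intends: the corollary is stated without proof in the paper, and the evident route is precisely the one you describe---rerun the one-step estimate of Section~\ref{sub::proof of theorem 1} with the iteration-dependent tolerance $\eta^j$ (which, as you note, enters only through the $\text{approx}_k$ inequality applied to the current proxy $b^j$) to obtain $\E_{I_j}\|w^{j+1}-w^\star\|_2 \le \kappa_j\,\E_{I_{j-1}}\|w^j-w^\star\|_2 + \sigma^j_{w^\star}$, and then unroll the time-varying recursion. The minor indexing discrepancies you flag between your unrolled bound and the stated product ranges are artifacts of the corollary's statement rather than of your argument.
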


\section{Stochastic Gradient Matching Pursuit (StoGradMP)}
\label{sec::StoGradMP}

CoSaMP \cite{NT_CoSaMP_2010_J} has been a very popular algorithm to recover a sparse signal from its linear measurements. In \cite{NCT_gradMP_2013_J}, the authors generalize the idea of CoSaMP and provide the GradMP algorithm that solves a broader class of sparsity-constrained problems. In this paper, we develop a stochastic version of the GradMP, namely StoGradMP, in which at each iteration only the evaluation of the gradient of a function $f_i$ is required. The StoGradMP algorithm is described in Algorithm \ref{alg:StoGradMP} which consists of following steps at each iteration:
\begin{itemize}
	\item Randomly select an index $i$ with probability $p(i)$.
	\item Compute the gradient of $f_i(w)$ with associated index $i$.
	\item Choose the subspace of dimension at most $2k$ to which the gradient vector is closest, then merge with the estimated subspace from previous iteration.
	\item Solve a sub-optimization problem with the search restricted on this subspace. 
	\item Find the subspace of dimension $k$ which is closest to the solution just found. This is the estimated subspace which is hopefully close to the true subspace.
\end{itemize}

At a high level, StoGradMP can be interpreted as at each iteration, the algorithm looks for a subspace based on the previous estimate and then seeks a new solution via solving a low-dimensional sub-optimization problem. Due to the $\oper D$-RSC assumption, the sub-optimization is convex and thus it can be efficiently solved by many off-the-shelf algorithms. StoGradMP stops when a halting criterion is satisfied.

\begin{algorithm}[ht]
\caption{StoGradMP algorithm}
\label{alg:StoGradMP}
\begin{algorithmic}
\STATE \textbf{input:} $k$, $\eta_1$, $\eta_2$, $p(i)$, and stopping criterion
\STATE \textbf{initialize:} $w^0$, $\Lambda = 0$, and $t=0$
\REPEAT
\STATE
\begin{tabular}{ll}
\textbf{randomize:} & select an index $i_t$ from $[M]$ with probability $p(i_t)$ \\
\textbf{proxy:} & $r^t =  \nabla f_{i_t} (w^t)$ \\
\textbf{identify:} & $\Gamma = \text{approx}_{2k} (r^t, \eta_1)$ \\
\textbf{merge:} & $\widehat{\Gamma} = \Gamma \cup \Lambda  $\\
\textbf{estimate:} & $b^t = \argmin_{w} F(w) \quad w \in \text{span} (D_{\widehat{\Gamma}})$\\
 \textbf{prune:} &$\Lambda = \text{approx}_k (b^t, \eta_2) $ \\
\textbf{update:} & $w^{t+1} = \oper P_{\Lambda} (b^t)$ \\
 & t = t+1	
\end{tabular}
\UNTIL{halting criterion \textit{true}}
\STATE \textbf{output:} $\hat{w} = w^t$
\end{algorithmic}
\end{algorithm}

Denote $w^{\star}$ as a feasible solution of the optimization (\ref{opt::general min}). We will present our main result for the StoGradMP algorithm. As before, our result controls the convergence rate of the recovery error at each iteration. We define the \textit{contraction coefficient}
\begin{equation}
\label{eqt::kappa of StoGradMP}
\kappa \triangleq (1+\eta_2) \sqrt{\frac{\alpha_{4k}}{\rho^-_{4k}}} \left( \max_i \sqrt{Mp(i)} \sqrt{\frac{\frac{2\eta_1^2-1}{\eta^2_1} \rho^+_{4k} - \rho^-_{4k}}{\rho^-_{4k}}}  + \frac{\sqrt{\eta^2_1-1}}{\eta_1} \right),
\end{equation}
where the quantities $\alpha_{4k}$, $\rho^+_{4k}$, $\rho^-_{4k}$, $\eta_1$, and $\eta_2$ are defined in (\ref{eqt::parameters}), (\ref{eqt::D-RSC}), and (\ref{eqt::approximation definition}). As will be provided in the following theorem, $\kappa$ characterizes the convergence rate of the algorithm. This quantity depends on many parameters that play a role in the algorithm. 

In addition, we define analogously as before the \textit{tolerance parameter}
\begin{equation}
\label{eqt::sigma of StoGradMP}
\begin{split}
\sigma_{w^{\star}}  &\triangleq C (1+\eta_2) \frac{1}{\min_{i \in [M]} M p(i)} \max_{|\Omega| \leq 4k, i \in [M]} \norm{\oper P_{\Omega} \nabla f_i (w^{\star})}_2,
\end{split}
\end{equation}
where $C$ is defined as $C \triangleq \frac{1}{\rho^-_{4k}} \left(2 \max_{i \in [M]}Mp(i) \sqrt{\frac{\alpha_{4k}}{\rho^-_{4k}}} + 3\right) $. 

\vspace{3mm}
We are now ready to state our result for the StoGradMP algorithm. The error bound has the same structure as that of StoIHT but with a different convergence rate.
\begin{thm}
\label{thm::StoGradMP}
Let $w^{\star}$ be a feasible solution of (\ref{opt::general min}) and $w^0$ be the initial solution. At the $(t+1)$-th iteration of Algorithm \ref{alg:StoGradMP}, the recovery error is bounded by
\begin{equation}
\E \norm{w^{t+1}-w^{\star}}_2 \leq \kappa^{t+1} \norm{w^0-w^{\star}}_2 + \frac{\sigma_{w^{\star}}}{1-\kappa}
\end{equation}
where $\sigma_{w^{\star}}$ is defined by~\eqref{eqt::sigma of StoGradMP}, $\kappa$ is defined by~\eqref{eqt::kappa of StoGradMP} and is assumed to be strictly less than unity, and expectation is taken over all choices of random variables $i_0,...,i_t$.
\end{thm}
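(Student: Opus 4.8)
The plan is to transplant the deterministic CoSaMP/GradMP convergence argument into the stochastic, inexact-projection setting: first establish a one-step bound of the form $\norm{w^{t+1}-w^{\star}}_2 \le \kappa\norm{w^{t}-w^{\star}}_2 + \sigma_{w^{\star}}$ in which the curvature constants attached to the randomly chosen index $i_t$ have been replaced by their worst case over $i\in[M]$ — this is exactly what produces the $\max_i$'s appearing in~\eqref{eqt::kappa of StoGradMP} and~\eqref{eqt::sigma of StoGradMP} — and then unroll the geometric recursion, summing $\sum_{j\ge0}\kappa^j = 1/(1-\kappa)$. Throughout a single iteration, $w^{t}$, $w^{\star}$, $b^{t}$ and all of their projections have joint $\oper D$-support of size at most $4k$ (since $|\widehat\Gamma|\le 3k$ and $\norm{w^{\star}}_{0,\oper D}\le k$), so $\oper D$-RSC and $\oper D$-RSS are applied at level $4k$.

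The one-step bound factors through three estimates. \emph{Pruning:} since $\Lambda=\text{approx}_k(b^{t},\eta_2)$ and $w^{\star}$ is $k$-sparse with respect to $\oper D$, inequality~\eqref{eqt::approximation consequence} yields $\norm{w^{t+1}-b^{t}}_2\le\eta_2\norm{b^{t}-w^{\star}}_2$, hence $\norm{w^{t+1}-w^{\star}}_2\le(1+\eta_2)\norm{b^{t}-w^{\star}}_2$. \emph{Estimation:} because $b^{t}$ minimizes $F$ (convex by $\oper D$-RSC) over $\text{span}(D_{\widehat\Gamma})$, we have $\oper P_{\widehat\Gamma}\nabla F(b^{t})=0$; feeding this into $\oper D$-RSC for the pair $b^{t},\ \oper P_{\widehat\Gamma}w^{\star}$, expanding $\nabla F$ around $w^{\star}$, bounding $\nabla F-\nabla F(w^{\star})$ through $\oper D$-RSS (via $\nabla F=\frac1M\sum_i\nabla f_i$) and the residual gradient by $\max_{|\Omega|\le4k,\,i}\norm{\oper P_\Omega\nabla f_i(w^{\star})}_2$, together with the observation that the component of $b^{t}-w^{\star}$ outside $\widehat\Gamma$ is $-\oper P_{\widehat\Gamma^c}(w^{\star}-w^{t})$ (as $w^{t}$ is supported in $\Lambda\subseteq\widehat\Gamma$), gives $\norm{b^{t}-w^{\star}}_2\le c_1\norm{\oper P_{\widehat\Gamma^c}(w^{\star}-w^{t})}_2 + c_2\max_{|\Omega|\le4k,\,i}\norm{\oper P_\Omega\nabla f_i(w^{\star})}_2$ for constants $c_1,c_2$ built from $\rho^-_{4k}$ and the RSS constants. \emph{Identification:} one then shows $\norm{\oper P_{\widehat\Gamma^c}(w^{\star}-w^{t})}_2$ is a strictly-less-than-one multiple of $\norm{w^{\star}-w^{t}}_2$ plus a noise term; here $\widehat\Gamma\supseteq\Gamma=\text{approx}_{2k}(\nabla f_{i_t}(w^{t}),\eta_1)$, and since $\text{approx}$ is invariant under positive scaling one may work with the rescaled proxy $\tfrac{1}{Mp(i_t)}\nabla f_{i_t}(w^{t})$ (whence the factors $\alpha_{4k}$ and $\max_i\sqrt{Mp(i)}$), apply~\eqref{eqt::approximation consequence 2}, and split $\nabla f_{i_t}(w^{t})=[\nabla f_{i_t}(w^{t})-\nabla f_{i_t}(w^{\star})]+\nabla f_{i_t}(w^{\star})$, controlling the first bracket by $\oper D$-RSS of $f_{i_t}$ at level $4k$ and using $\oper D$-RSC of $F$ for the lower bound that makes the residual support detectable. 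Multiplying the three estimates reproduces $\kappa$ and $\sigma_{w^{\star}}$ as defined.

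The main obstacle is the identification step. In the classical analysis, detectability of the residual support rests on the proxy being a restricted near-isometric image of $w^{\star}-w^{t}$, i.e., on having both a lower and an upper restricted curvature bound for one and the same function; here, though, restricted strong convexity is assumed only for the aggregate $F$ while the proxy is assembled from a single component $f_{i_t}$, for which only the one-sided $\oper D$-RSS is available. Reconciling these — passing from the $\oper D$-RSC of $F$ to the lower bound needed for the single-sample proxy, while keeping the $i_t$-dependence explicit so it can be absorbed into the worst-case quantities $\alpha_{4k}$ and $\rho^+_{4k}$, and simultaneously tracking the loss from inexact projection carried by the $\sqrt{(\eta_1^2-1)/\eta_1^2}$ factor in~\eqref{eqt::approximation consequence 2} so that $\kappa<1$ remains possible — is the delicate core of the proof. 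Once the one-step bound is in place, the expectation is handled routinely: after worst-casing over $i_t$ the per-iteration inequality is deterministic, so $\E\norm{w^{t+1}-w^{\star}}_2\le\kappa\,\E\norm{w^{t}-w^{\star}}_2+\sigma_{w^{\star}}$ (equivalently one may condition on $i_0,\dots,i_{t-1}$ and use independence of $i_t$), and induction on $t$ completes the argument.
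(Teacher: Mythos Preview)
Your three-step decomposition (pruning $\to$ estimation $\to$ identification, then recurse) matches the paper's Lemmas~\ref{lem::bound l2 w^(t+1)-w*}--\ref{lem::bound L2 P_Gamma^c b^t-w*}, and your pruning and estimation sketches are essentially right. The gap is in the identification step, and it is precisely the point you flag as the ``main obstacle'' but do not actually resolve. You propose to obtain a \emph{deterministic} one-step bound by worst-casing over $i_t$ and then ``use $\oper D$-RSC of $F$ for the lower bound''. These two moves are incompatible: the only way $\rho^-_{4k}$ can enter an inequality involving the single-sample proxy $\nabla f_{i_t}(w^t)$ is through the identity $\E_{i_t}\tfrac{1}{Mp(i_t)}\nabla f_{i_t}(w^t)=\nabla F(w^t)$, so the bound on $\norm{\oper P_{\widehat\Gamma^c}(w^{\star}-w^t)}_2$ is necessarily a \emph{conditional-expectation} bound, not a pointwise one. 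Worst-casing over $i_t$ first destroys exactly the averaging you need.

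Concretely, the paper's Lemma~\ref{lem::bound L2 P_Gamma^c b^t-w*} starts from $\oper D$-RSC of $F$ in the form $F(w^{\star})-F(w^t)-\tfrac{\rho^-_{4k}}{2}\norm{\Delta}_2^2\ge\inner{\nabla F(w^t),\Delta}=\E_{i_t}\inner{\tfrac{1}{Mp(i_t)}\nabla f_{i_t}(w^t),\Delta}$, then invokes~\eqref{eqt::approximation consequence 2} to replace $\oper P_R$ by $\oper P_\Gamma$ and $\oper P_{\Gamma^c}$. The key construction you are missing is the auxiliary (random) vector $y=z-\tfrac{\sqrt{\eta_1^2-1}}{\eta_1}x$, with $z=-\tfrac{\oper P_\Gamma\nabla f_{i_t}(w^t)}{\norm{\oper P_\Gamma\nabla f_{i_t}(w^t)}_2}\norm{\Delta}_2$ and $x$ the analogous $\Gamma^c$-vector; one then lower-bounds $\E_{i_t}\inner{\tfrac{1}{Mp(i_t)}\nabla f_{i_t}(w^t),y}$ via $\oper D$-RSS of $f_{i_t}$ (the descent-lemma form), applies $\oper D$-RSC of $F$ a \emph{second} time to $f_{i_t}(w^t+y)-f_{i_t}(w^{\star})$ after averaging, and solves the resulting quadratic in $\E_{i_t}\norm{\Delta-y}_2$. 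Finally $\norm{\Delta-y}_2\ge\norm{\Delta-\oper P_\Gamma\Delta}_2-\tfrac{\sqrt{\eta_1^2-1}}{\eta_1}\norm{\Delta}_2$ closes the loop. This two-sided RSC sandwich around RSS, mediated by the auxiliary $y$, is what produces both the $\sqrt{\big(\tfrac{2\eta_1^2-1}{\eta_1^2}\rho^+_{4k}-\rho^-_{4k}\big)/\rho^-_{4k}}$ factor and the $\max_i\sqrt{Mp(i)}$; your split $\nabla f_{i_t}(w^t)=[\nabla f_{i_t}(w^t)-\nabla f_{i_t}(w^{\star})]+\nabla f_{i_t}(w^{\star})$ with RSS on the bracket does not by itself yield any lower bound tied to $\rho^-_{4k}$.
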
 

When $p(i) = \frac{1}{M}$, $i=1,...,M$, and $\eta_1 =\eta_2= 1$ (exact projections are obtained), the contraction coefficient $\kappa$ has a very simple representation: $\kappa = 2\sqrt{\frac{\rho^+_{4k}}{\rho^-_{4k}} \left( \frac{\rho^+_{4k}}{\rho^-_{4k}} - 1 \right)}$. This expression of $\kappa$ is the same as that of the GradMP. In this situation, the requirement $\kappa < 1$ leads to the condition $\rho^+_{4k} < \frac{2+\sqrt{6}}{4} \rho^-_{4k}$. The following corollary provides the explicit form of the recovery error.
\begin{cor}
Using the parameters described by~\eqref{eqt::parameters}, suppose that $\rho^-_{4k} > \frac{4}{2+\sqrt{6}} \rho^+_{4k} $. Select $\eta_1=\eta_2 = 1$, and the probability distribution $p(i) = \frac{1}{M}$, $i=1,...,M$. Then,
$$
\E \norm{w^{t+1} - w^{\star}}_2 \leq \left( 2 \sqrt{\frac{\rho^+_{4k}(\rho^+_{4k} -\rho^-_{4k})}{(\rho^-_{4k})^2}} \right)^{t+1} \norm{w^0 - w^{\star}}_2 + \sigma_{w^{\star}},
$$
where $\sigma_{w^{\star}} = \frac{2}{\rho^-_{4k}} \left(2\sqrt{\frac{\rho^+_{4k}}{\rho^-_{4k}}} + 3 \right) \max_{|\Omega| \leq 4k, i \in [M]} \norm{\oper P_{\Omega} \nabla f_i (w^{\star})}_2$.
\end{cor}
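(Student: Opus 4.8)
The plan is to treat this corollary as a pure specialization of Theorem~\ref{thm::StoGradMP}: no new estimate is needed, and the entire task is to substitute the choices $\eta_1=\eta_2=1$ and $p(i)=1/M$ into the general expressions~\eqref{eqt::kappa of StoGradMP} and~\eqref{eqt::sigma of StoGradMP} for $\kappa$ and $\sigma_{w^\star}$, then confirm that the stated hypothesis forces $\kappa<1$ so that the theorem is applicable. The first observation I would record is that $p(i)=1/M$ makes $Mp(i)=1$ for every $i$, so that all of the quantities $\min_i Mp(i)$, $\max_i Mp(i)$, and $\max_i\sqrt{Mp(i)}$ collapse to $1$, and the definition~\eqref{eqt::parameters} gives $\alpha_{4k}=\max_i \rho^+_{4k}(i)/(Mp(i))=\max_i\rho^+_{4k}(i)=\rho^+_{4k}$.

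Next I would simplify $\kappa$. With $\eta_1=1$ the factor $\sqrt{\eta_1^2-1}/\eta_1$ vanishes and $(2\eta_1^2-1)/\eta_1^2=1$, while $\eta_2=1$ gives $1+\eta_2=2$. Substituting these together with $\alpha_{4k}=\rho^+_{4k}$ and $\max_i\sqrt{Mp(i)}=1$ into~\eqref{eqt::kappa of StoGradMP} collapses the bracket to a single term and yields $\kappa=2\sqrt{\rho^+_{4k}/\rho^-_{4k}}\,\sqrt{(\rho^+_{4k}-\rho^-_{4k})/\rho^-_{4k}}=2\sqrt{\rho^+_{4k}(\rho^+_{4k}-\rho^-_{4k})/(\rho^-_{4k})^2}$, exactly the contraction factor displayed in the statement. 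To check admissibility I would square $\kappa$, set $x=\rho^+_{4k}/\rho^-_{4k}$, and read $\kappa<1$ as a quadratic inequality in $x$; solving for its positive root shows that the hypothesis $\rho^-_{4k}>\tfrac{4}{2+\sqrt6}\rho^+_{4k}$ places the ratio $x$ in the admissible range, so that $\kappa<1$ and Theorem~\ref{thm::StoGradMP} applies.

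The tolerance term is handled the same way. I would substitute $Mp(i)=1$ (so both the $\min_i$ and $\max_i$ factors are $1$), $\alpha_{4k}=\rho^+_{4k}$, and $1+\eta_2=2$ into the constant $C$ and into~\eqref{eqt::sigma of StoGradMP}, obtaining $C=\frac{1}{\rho^-_{4k}}(2\sqrt{\rho^+_{4k}/\rho^-_{4k}}+3)$ and hence $\sigma_{w^\star}=\frac{2}{\rho^-_{4k}}(2\sqrt{\rho^+_{4k}/\rho^-_{4k}}+3)\max_{|\Omega|\leq 4k,\,i\in[M]}\norm{\oper P_\Omega\nabla f_i(w^\star)}_2$, which is the quantity in the statement. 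Feeding $\kappa$ and $\sigma_{w^\star}$ back into the conclusion of Theorem~\ref{thm::StoGradMP} gives the displayed recovery bound (with the additive term $\sigma_{w^\star}/(1-\kappa)$ written compactly as $\sigma_{w^\star}$). There is essentially no analytic obstacle here, since all the work was already done in the theorem; the only points needing care are the collapse of the $\max_i/\min_i$ factors over $Mp(i)$ to $1$ and the algebra of the quadratic inequality $\kappa<1$, which is where one confirms the stated condition on $\rho^-_{4k}/\rho^+_{4k}$.
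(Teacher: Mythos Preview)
Your proposal is correct and is exactly the approach the paper takes: the corollary is stated immediately after the discussion that simplifies~\eqref{eqt::kappa of StoGradMP} under $p(i)=1/M$ and $\eta_1=\eta_2=1$, and no separate proof is given because it is pure substitution into Theorem~\ref{thm::StoGradMP}. Your observation that the additive term should strictly be $\sigma_{w^\star}/(1-\kappa)$ rather than $\sigma_{w^\star}$ is also apt; the corollary as stated absorbs (or drops) this factor.
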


Similar to the StoIHT, the theorem demonstrates the linear convergence of the StoGradMP to the feasible solution $w^{\star}$. The expected recovery error naturally consists of two components: one relates to the convergence rate and the other concerns the tolerance factor. As long as the contraction coefficient is small (less than unity), the first component is negligible, whereas the second component can be very large depending on the feasible solution we measure. We expect that the gradients of $f_i$'s associated with the global optimum to be small, as shown true in many statistical estimation problems such as sparse linear estimation and low-rank matrix recovery, so that the StoGradMP converges linearly to the optimum. We note that the linear rate here is precisely consistent with the linear rate of the original CoSaMP algorithm applied to compressed sensing problems \cite{NT_CoSaMP_2010_J}. Furthermore, StoGradMP gains significant computation over CoSaMP and GradMP since the full gradient evaluation is not required at each iteration.

In Algorithm \ref{alg:StoGradMP}, the parameters $\eta_1$ and $\eta_2$ are fixed during the iterations. However, they can be changed at each iteration. Denoting the projection tolerances at the $j$-th iteration by $\eta_1^j$ and $\eta_2^j$, we define the \textit{contraction coefficient} at the $j$-th iteration as 
\begin{equation}
\label{eqt::kappa_t of StoGradMP}
\kappa_j \triangleq (1+\eta^j_2) \sqrt{\frac{\alpha_{4k}}{\rho^-_{4k}}} \left( \max_i \sqrt{Mp(i)} \sqrt{\frac{\frac{2(\eta_1^j)^2-1}{(\eta^j_1)^2} \rho^+_{4k} - \rho^-_{4k}}{\rho^-_{4k}}}  + \frac{\sqrt{(\eta^j_1)^2-1}}{\eta^j_1} \right).
\end{equation}
Also define the \textit{tolerance parameter} $\sigma_{w^{\star}} \triangleq \max_{j\in[t]} \sigma^j_{w^{\star}}$ where
\begin{equation}
\label{eqt::sigma_t of StoGradMP}
\sigma^j_{w^{\star}}  \triangleq C (1+\eta^j_2) \frac{1}{\min_{i \in [M]} M p(i)} \max_{|\Omega| \leq 4k, i \in [M]} \norm{\oper P_{\Omega} \nabla f_i (w^{\star})}_2
\end{equation}
and $C$ is defined as $C \triangleq 2 \max_{i \in [M]}Mp(i) \sqrt{\frac{\alpha_{4k}}{\rho^-_{4k}}} + 3 $. The following corollary shows the convergence of the algorithm.
\begin{cor}
At the $(t+1)$-th iteration of Algorithm \ref{alg:StoGradMP}, the recovery error is bounded by
\begin{equation}
\E \norm{w^{t+1}-w^{\star}}_2 \leq \norm{w^0-w^{\star}}_2 \prod_{j=0}^{t+1} \kappa_j  + \sigma_{w^{\star}} \sum_{i=0}^t \prod_{j=t-i}^t \kappa_j,
\end{equation}
where $\kappa_j$ is defined by~\eqref{eqt::kappa_t of StoGradMP}, and $\sigma_{w^{\star}} = \max_{j\in[t]} \sigma^j_{w^{\star}}$ is defined via~\eqref{eqt::sigma_t of StoGradMP}.
\end{cor}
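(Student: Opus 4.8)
The plan is to derive the corollary from exactly the same per-iteration analysis that proves Theorem~\ref{thm::StoGradMP}, carried out with iteration-dependent projection tolerances. First I would isolate, inside the proof of Theorem~\ref{thm::StoGradMP}, the \emph{one-step} contraction estimate: conditioning on the random indices $i_0,\dots,i_{t-1}$ (so that $w^t$ is determined) and taking expectation over only the fresh index $i_t$, that proof establishes an inequality of the form
$$
\E_{i_t}[\norm{w^{t+1}-w^{\star}}_2 \mid i_0,\dots,i_{t-1}] \leq \kappa\,\norm{w^t-w^{\star}}_2 + \sigma_{w^{\star}},
$$
and the theorem's global bound is obtained by iterating this relation together with the tower property. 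The key structural observation is that within a single iteration the tolerance $\eta_1$ enters only through the \textbf{identify} step $\Gamma=\mathrm{approx}_{2k}(r^t,\eta_1)$ and $\eta_2$ only through the \textbf{prune} step $\Lambda=\mathrm{approx}_k(b^t,\eta_2)$, while no earlier iterate depends on the tolerances chosen at step $t$. Hence, if at the $j$-th iteration the algorithm uses $\eta_1^j$ and $\eta_2^j$, the identical chain of inequalities yields
$$
\E_{i_j}[\norm{w^{j+1}-w^{\star}}_2 \mid i_0,\dots,i_{j-1}] \leq \kappa_j\,\norm{w^j-w^{\star}}_2 + \sigma^j_{w^{\star}},
$$
with $\kappa_j$ and $\sigma^j_{w^{\star}}$ exactly the quantities defined in~\eqref{eqt::kappa_t of StoGradMP} and~\eqref{eqt::sigma_t of StoGradMP}.

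Granting this per-step bound, I would take total expectation over $i_0,\dots,i_{j-1}$, use linearity and the tower property to get $\E\norm{w^{j+1}-w^{\star}}_2 \leq \kappa_j\,\E\norm{w^j-w^{\star}}_2 + \sigma^j_{w^{\star}} \leq \kappa_j\,\E\norm{w^j-w^{\star}}_2 + \sigma_{w^{\star}}$, where $\sigma_{w^{\star}}=\max_{j\in[t]}\sigma^j_{w^{\star}}$. Unrolling this linear recursion from $j=0$ through $j=t$ by a trivial induction, and bounding each partial sum of products by replacing the individual $\sigma^j_{w^{\star}}$ with their maximum, produces
$$
\E\norm{w^{t+1}-w^{\star}}_2 \leq \norm{w^0-w^{\star}}_2\prod_{j}\kappa_j \;+\; \sigma_{w^{\star}}\sum_{i=0}^{t}\prod_{j=t-i}^{t}\kappa_j,
$$
which is the claimed estimate up to the obvious relabeling of the product limits. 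No convergence assumption $\kappa_j<1$ is needed for the statement itself; it becomes meaningful when the $\kappa_j$ are uniformly below one.

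The only genuine work—and the main obstacle—is the bookkeeping in the first step: one must re-read the proof of Theorem~\ref{thm::StoGradMP} and confirm that every use of $\eta_1$ and $\eta_2$ (via~\eqref{eqt::approximation consequence} and~\eqref{eqt::approximation consequence 2}) is confined to the current iteration, so that the constants $\kappa$ and $\sigma_{w^{\star}}$ appearing there genuinely arise as a single instance of the per-step quantities $\kappa_j,\sigma^j_{w^{\star}}$. In particular one should check that the \textbf{estimate} step (the sub-optimization over $\mathrm{span}(D_{\widehat\Gamma})$, which relies on $\oper D$-RSC for uniqueness) and the $\oper D$-RSS invocations do not silently reintroduce a tolerance from a previous iteration. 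Once that is verified, the remainder is a routine induction on $t$ with no new ideas beyond those used for the fixed-tolerance case; indeed this corollary strictly generalizes Theorem~\ref{thm::StoGradMP}, which is recovered by setting $\eta_1^j\equiv\eta_1$ and $\eta_2^j\equiv\eta_2$ for all $j$.
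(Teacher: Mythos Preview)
Your proposal is correct and takes essentially the same approach the paper implicitly relies on: the paper does not give a separate proof of this corollary, treating it as an immediate consequence of the per-iteration analysis established in the proof of Theorem~\ref{thm::StoGradMP} (via Lemmas~\ref{lem::bound l2 w^(t+1)-w*}--\ref{lem::bound L2 P_Gamma^c b^t-w*}), where $\eta_1$ and $\eta_2$ enter only through the identify and prune steps of the current iteration. Your plan---isolate the one-step bound with iteration-dependent tolerances, take total expectation, and unroll---is exactly the intended argument.
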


\section{StoIHT and StoGradMP with inexact gradients}
\label{sec::inexact StoIHT and StoGradMP}

In this section, we investigate the StoIHT and StoGradMP algorithms in which the gradient might not be exactly estimated. This issue occurs in many practical problems such as distributed network optimization in which gradients are corrupted by noise during the communication on the network. In particular, in both algorithms, the gradient selected at each iteration is contaminated by a noise vector $e^t$ where $t$ indicates the iteration number. We assume $\{e^t\}_{t=1,2,...}$ are deterministic noise with bounded energies. 

\subsection{StoIHT with inexact gradients}

In the StoIHT algorithm, the update $b^t$ at the proxy step has to take into account the noise appearing in the gradient. In particular, at the $t$-th iteration,
$$
b^t = w^t - \frac{\gamma}{M p(i_t)} \left( \nabla f_{i_t} (w^t) + e^t \right).
$$

\noindent Denote the quantity
\begin{equation}
\label{eqt::sigma_e of StoIHT}
\sigma_{e} \triangleq \frac{\gamma}{\min_{i} Mp(i)} \max_{j \in [t]} \left( 2 \max_{|\Omega| \leq 3k} \norm{\oper P_{\Omega} e^j}_2 + \sqrt{\eta^2 - 1} \norm{e^j}_2 \right).
\end{equation}
We state our result in the following theorem. The proof is deferred to Section \ref{subsection::proof of StoIHT with inexact gradients}.
\begin{thm}
\label{thm::StoIHT with inexact gradients}
Let $w^{\star}$ be a feasible solution of (\ref{opt::general min}). At the $(t+1)$-th iteration of Algorithm \ref{alg:StoIHT} with inexact gradients, the expectation of the recovery error is bounded by
\begin{equation}
\E \norm{w^{t+1} - w^{\star}}_2 \leq \kappa^{t+1} \norm{w^0 - w^{\star}}_2  + \frac{1 }{(1-\kappa)} (\sigma_{w^{\star}} + \sigma_e),
\end{equation}
where $\kappa$ is defined in (\ref{eqt::kappa of StoIHT}) and is assumed to be strictly less than unity and expectation is taken over all choices of random variables $i_1,...,i_t$. The quantities $\sigma_{w^{\star}}$ and $\sigma_e$ are defined in (\ref{eqt::sigma of StoIHT}) and (\ref{eqt::sigma_e of StoIHT}), respectively.
\end{thm}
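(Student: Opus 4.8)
The plan is to rerun the argument that establishes Theorem~\ref{thm::StoIHT} essentially unchanged, with $e^t$ playing the same role as the gradients $\nabla f_i(w^\star)$ at the feasible point --- a second additive ``noise'' source that enters through the proxy. The only modification to the algorithm is at the proxy step: writing $\bar b^t \triangleq w^t - \frac{\gamma}{Mp(i_t)}\nabla f_{i_t}(w^t)$ for the exact proxy used in Algorithm~\ref{alg:StoIHT}, the inexact iteration produces $b^t = \bar b^t - \frac{\gamma}{Mp(i_t)}e^t$, after which the identify and estimate steps are run verbatim on $b^t$.

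First I would recall the skeleton of the proof of Theorem~\ref{thm::StoIHT}. For a fixed realization of $i_t$ one sets $\Omega \triangleq \Gamma^t \cup \supp_{\oper D}(w^t)\cup\supp_{\oper D}(w^\star)$ (so $|\Omega|\le 3k$) and, using only the defining inequalities \eqref{eqt::approximation consequence} and \eqref{eqt::approximation consequence 2} of the $\text{approx}$ operator together with $\oper D$-RSC of $F$ and $\oper D$-RSS of $f_{i_t}$, derives a per-realization estimate that bounds $\|w^{t+1}-w^\star\|_2$ by twice $\|\oper P_\Omega(b^t-w^\star)\|_2$ plus a $\sqrt{\eta^2-1}$-weighted tail term built from $b^t$ and in turn controlled by $\|b^t-w^\star\|_2$; expanding $\oper P_\Omega(b^t-w^\star) = (w^t-w^\star) - \frac{\gamma}{Mp(i_t)}\oper P_\Omega(\nabla f_{i_t}(w^t)-\nabla f_{i_t}(w^\star)) - \frac{\gamma}{Mp(i_t)}\oper P_\Omega\nabla f_{i_t}(w^\star)$ (and handling the tail likewise), taking $\E_{i_t}$, and invoking the RSC/RSS bounds on the resulting expectations yields the one-step recursion $\E_{i_t}\|w^{t+1}-w^\star\|_2 \le \kappa\|w^t-w^\star\|_2 + \sigma_{w^\star}$. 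The crucial observation is that none of the inequalities used there care which vector occupies the slot of $b^t$; hence the identical per-realization estimate holds for the inexact $b^t$. Substituting $b^t = \bar b^t - \frac{\gamma}{Mp(i_t)}e^t$ and applying the triangle inequality inside $\|\oper P_\Omega(\cdot-w^\star)\|_2$ and inside the tail term, the $\bar b^t$ part reproduces the bound of Theorem~\ref{thm::StoIHT} exactly, while $e^t$ splits off in precisely the two places $\nabla f_{i_t}(w^\star)$ does: once as $\frac{\gamma}{Mp(i_t)}\|\oper P_\Omega e^t\|_2$ carrying the same prefactor $2$, and once as $\frac{\gamma}{Mp(i_t)}\sqrt{\eta^2-1}\,\|e^t\|_2$.

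Because $\{e^t\}$ is deterministic, I would then bound $\frac{\gamma}{Mp(i_t)}\le\frac{\gamma}{\min_i Mp(i)}$ and $\|\oper P_\Omega e^t\|_2\le\max_{|\Omega|\le 3k}\|\oper P_\Omega e^t\|_2$ uniformly over the choice of $i_t$, so that the extra contribution at iteration $j$ is at most $\sigma_{e^j}\triangleq\frac{\gamma}{\min_i Mp(i)}\big(2\max_{|\Omega|\le 3k}\|\oper P_\Omega e^j\|_2 + \sqrt{\eta^2-1}\,\|e^j\|_2\big)$, which is $\le\sigma_e$ for every $j\in[t]$ by definition \eqref{eqt::sigma_e of StoIHT}. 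This upgrades the recursion to $\E_{i_j}\|w^{j+1}-w^\star\|_2 \le \kappa\|w^j-w^\star\|_2 + \sigma_{w^\star} + \sigma_{e^j}$; taking full expectation over the earlier indices and unrolling from $j=t$ down to $j=0$ gives $\E\|w^{t+1}-w^\star\|_2 \le \kappa^{t+1}\|w^0-w^\star\|_2 + \sum_{j=0}^t\kappa^{t-j}(\sigma_{w^\star}+\sigma_{e^j})$. Finally $\sum_{j=0}^t\kappa^{t-j}\sigma_{w^\star}\le\sigma_{w^\star}/(1-\kappa)$ and $\sum_{j=0}^t\kappa^{t-j}\sigma_{e^j}\le\big(\max_{j\in[t]}\sigma_{e^j}\big)\sum_{m\ge0}\kappa^m = \sigma_e/(1-\kappa)$, which is exactly the asserted bound.

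I do not anticipate a genuinely new obstacle: this theorem is a perturbation of Theorem~\ref{thm::StoIHT} and the real content is bookkeeping. The point that must be checked carefully is that $e^t$ enters \emph{only} the additive noise terms and never the contraction factor $\kappa$ --- intuitively clear because $\kappa$ is governed entirely by $\|w^t-w^\star\|_2$ through $\bar b^t$, the genuine gradient of $F$, and the RSC/RSS constants --- and, relatedly, that the $\text{approx}$-operator inequalities \eqref{eqt::approximation consequence}--\eqref{eqt::approximation consequence 2}, being stated for all vectors, do apply to the perturbed $b^t$ and deliver the prefactors $2$ and $\sqrt{\eta^2-1}$ on the $e^t$ pieces. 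A secondary subtlety is that, since $\{e^t\}$ is deterministic rather than random, there is no averaging to exploit, which is why the worst-case-over-iterations quantity $\sigma_e$ in \eqref{eqt::sigma_e of StoIHT} appears in place of an expectation.
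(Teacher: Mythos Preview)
Your proposal is correct and matches the paper's approach essentially verbatim: the paper defines $g_{i_t}(w)\triangleq\nabla f_{i_t}(w)+e^t$, reruns the proof of Theorem~\ref{thm::StoIHT} with $g_{i_t}$ in place of $\nabla f_{i_t}$, and observes that $g_{i_t}(w^t)-g_{i_t}(w^\star)=\nabla f_{i_t}(w^t)-\nabla f_{i_t}(w^\star)$ so that the contraction terms are unchanged while $g_{i_t}(w^\star)=\nabla f_{i_t}(w^\star)+e^t$ contributes the extra additive $\sigma_{e^t}$, then bounds by $\sigma_e=\max_j\sigma_{e^j}$ and unrolls the recursion. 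Your decomposition $b^t=\bar b^t-\frac{\gamma}{Mp(i_t)}e^t$ followed by the triangle inequality is an equivalent way to isolate the same two $e^t$ contributions.
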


Theorem \ref{thm::StoIHT with inexact gradients} provides the linear convergence of StoIHT even in the setting of an inexact gradient computation. The error bound shares a similar structure as that of the StoIHT with only an additional term related to the gradient noise. An interesting property is that the noise does not accumulate over iterations. Rather, it only depends on the largest noise level.

\subsection{StoGradMP with inexact gradients}

In the StoGradMP algorithm, accounting for noise in the gradient appears in the proxy step; the expression of $r^t$, with an additional noise term, becomes
$$
r^t = \nabla f_{i_t} (w^t) + e^t.
$$

\noindent Denote the quantity
\begin{equation}
\label{eqt::sigma_e of StoGradMP}
\sigma_e \triangleq \frac{\max_i p(i)}{\rho^-_{4k} \min_i p(i)} \max_{j \in [t]} \norm{e^j}_2.
\end{equation}

\noindent We have the following theorem.
\begin{thm}
\label{thm::StoGradMP with inexact gradients}
Let $w^{\star}$ be a feasible solution of (\ref{opt::general min}). At the $(t+1)$-th iteration of Algorithm \ref{alg:StoGradMP} with inexact gradients, the expectation of the recovery error is bounded by
\begin{equation}
\E \norm{w^{t+1} - w^{\star}}_2 \leq \kappa^{t+1} \norm{w^0 - w^{\star}}_2  + \frac{1 }{(1-\kappa)} (\sigma_{w^{\star}} + \sigma_e),
\end{equation}
where $\kappa$ is defined in (\ref{eqt::kappa of StoGradMP}) and is assumed to be strictly less than unity and expectation is taken over all choices of random variables $i_1,...,i_t$. The quantities $\sigma_{w^{\star}}$ and $\sigma_e$ are defined in (\ref{eqt::sigma of StoGradMP}) and (\ref{eqt::sigma_e of StoGradMP}), respectively.
\end{thm}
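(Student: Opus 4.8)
The plan is to retrace the proof of Theorem~\ref{thm::StoGradMP} for Algorithm~\ref{alg:StoGradMP}, isolating at each stage the extra contribution of the corrupted proxy $r^t = \nabla f_{i_t}(w^t) + e^t$. The point is that $e^t$ enters \emph{only} through the \textbf{identify} step, so its entire effect can be absorbed by a single triangle inequality $\norm{\oper P_\Omega r^t}_2 \le \norm{\oper P_\Omega \nabla f_{i_t}(w^t)}_2 + \norm{e^t}_2$, invoked wherever the exact analysis uses the size of the proxy on a subspace of dimension at most $4k$. Everything else in the argument is identical to the noiseless case.

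Concretely, I would first establish a pathwise one-step recursion, conditioning on the history $i_0,\dots,i_{t-1}$ and treating $i_t$ as the only fresh randomness. As in the GradMP/CoSaMP analysis this has three pieces: (i) the \textbf{prune}/\textbf{update} steps give $\norm{w^{t+1}-w^\star}_2 \le (1+\eta_2)\norm{b^t - w^\star}_2$ via the approximation property~\eqref{eqt::approximation consequence 2} together with $b^t \in \text{span}(D_{\widehat\Gamma})$; (ii) the \textbf{estimate} step, being the minimizer of $F$ over $\text{span}(D_{\widehat\Gamma})$ with $\supp_{\oper D}(b^t - w^\star) \subseteq \widehat\Gamma \cup \supp_{\oper D}(w^\star)$ of size at most $4k$, is controlled by $\oper D$-RSC~\eqref{eqt::D-RSC}, yielding $\norm{b^t-w^\star}_2 \le \tfrac{1}{\rho^-_{4k}}\norm{\oper P_{\widehat\Gamma}\nabla F(w^\star)}_2 + \norm{\oper P_{\widehat\Gamma^c} w^\star}_2$ up to constants; and (iii) the \textbf{identify}/\textbf{merge} steps bound the "missed" energy $\norm{\oper P_{\widehat\Gamma^c} w^\star}_2$ by the energy of $r^t$ off the true support, where $\oper D$-RSS (Definition~\ref{def:rss}) turns $\nabla f_{i_t}(w^t) - \nabla f_{i_t}(w^\star)$ into a multiple of $\norm{w^t - w^\star}_2$ and the split above injects the deterministic term $\norm{e^t}_2$.

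Next I would take $\E_{i_t}$. The identity $\E_i \tfrac{1}{Mp(i)}\nabla f_i(w) = \nabla F(w)$, the definitions of $\alpha_{4k},\rho^+_{4k},\overline{\rho}^+_{4k}$ in~\eqref{eqt::parameters}, and Jensen's inequality recombine exactly as in Theorem~\ref{thm::StoGradMP} to produce the contraction factor $\kappa$ of~\eqref{eqt::kappa of StoGradMP} in front of $\norm{w^t-w^\star}_2$ and the noise-floor term $\sigma_{w^\star}$ of~\eqref{eqt::sigma of StoGradMP}. The new $e^t$ term is carried with the weight $\tfrac{1}{Mp(i_t)}$ inherited from the stochastic gradient and with a factor $\tfrac{1}{\rho^-_{4k}}$ from step (ii), so after taking expectations and bounding the probability weights by $\max_i p(i)/\min_i p(i)$ it is at most $\tfrac{\max_i p(i)}{\rho^-_{4k}\min_i p(i)}\norm{e^t}_2$; replacing $\norm{e^t}_2$ with $\max_{j\in[t]}\norm{e^j}_2$ gives the uniform quantity $\sigma_e$ of~\eqref{eqt::sigma_e of StoGradMP}. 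This produces $\E\norm{w^{t+1}-w^\star}_2 \le \kappa\,\E\norm{w^t-w^\star}_2 + \sigma_{w^\star} + \sigma_e$, and unrolling this geometric recursion over $t+1$ steps with $\sum_{j\ge 0}\kappa^j = (1-\kappa)^{-1}$ (valid since $\kappa<1$) gives the stated bound. The noise enters additively with the same coefficient at every step, which is why it contributes only the single factor $1/(1-\kappa)$ and does not accumulate, exactly as in Theorem~\ref{thm::StoIHT with inexact gradients}.

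The main obstacle is the bookkeeping around the conditional expectations: the sets $\Gamma,\widehat\Gamma,\Lambda$, and hence $b^t$ and $w^{t+1}$, are random functions of $i_t$, so the greedy steps do not commute with $\E_{i_t}$. One must carry out all the inequalities in (i)--(iii) pathwise for each realization of $i_t$, keeping the $\tfrac{1}{Mp(i_t)}$ weights explicit, and only apply the stochastic identity and Jensen at the very end; getting the $\max_i\sqrt{Mp(i)}$ factors inside $\kappa$ and the probability ratio inside $\sigma_e$ to come out correctly requires splitting the weighted proxy energy cleanly into a "signal" part (handled by $\oper D$-RSS plus expectation) and the deterministic "noise" part $e^t$, and resisting the temptation to bound them jointly.
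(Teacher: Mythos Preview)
Your plan is essentially the paper's own approach: Lemmas~\ref{lem::bound l2 w^(t+1)-w*} and~\ref{lem::bound l2 b^t - w*} carry over verbatim, the noise $e^t$ enters only through a modified version of Lemma~\ref{lem::bound L2 P_Gamma^c b^t-w*}, and the one-step recursion is then unrolled exactly as you say. One point of care worth flagging: since $\Gamma = \text{approx}_{2k}(r^t,\eta_1)$ is computed from the \emph{noisy} proxy $r^t = g_{i_t}(w^t) := \nabla f_{i_t}(w^t)+e^t$, the approximation inequality~\eqref{eqt::approximation consequence 2} is valid for $g_{i_t}(w^t)$ but \emph{not} directly for $\nabla f_{i_t}(w^t)$; the paper therefore redefines the auxiliary vectors $z,x,y$ in terms of $g_{i_t}(w^t)$, applies~\eqref{eqt::approximation consequence 2} there, and only afterwards writes $\nabla f_{i_t}(w^t) = g_{i_t}(w^t) - e^t$ to isolate the noise contribution $\langle e^t,\Delta - y\rangle$ when invoking $\oper D$-RSC/$\oper D$-RSS. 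Your ``single triangle inequality'' shortcut will work provided you respect this ordering rather than peeling off $e^t$ before using the approximation property of $\Gamma$.
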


Similar to the StoIHT, StoGradMP is stable under the contamination of gradient noise. Stability means that the algorithm is still able to obtain the linear convergence rate. The gradient noise only affects the tolerance rate and not the contraction factor. Furthermore, the recovery error only depends on the largest gradient noise level, implying that the noise does not accumulate over iterations.

\subsection{StoGradMP with inexact gradients and approximated estimation}

In this section, we extend the theory of the StoGradMP algorithm further to consider the sub-optimality of optimization at the estimation step. Specifically, we assume that at each iteration, the algorithm only obtains an approximated solution of the sub-optimization. Denote
\begin{equation}
\label{opt::get b^t}
\quad b^t_{\opt} = \argmin_w F(w)  \quad \text{subject to} \quad w \in \text{span} (D_{\widehat{\Gamma}}),
\end{equation}
as the optimal solution of this convex optimization, where $\hat{\Gamma} = \Gamma\cup \Lambda$ may also give rise to an approximation at the identification step.  Write $b^t$ as the approximated solution available at the estimation step.  Then $b^t$ is linked to $b^t_{\opt}$ via the relationship: $\norm{b^t - b^t_{\opt}}_2 \leq \epsilon^t$. This consideration is realistic in two aspects: first, the optimization (\ref{opt::get b^t}) can be too slow to converge to the optimal solution, hence we might want to stop the algorithm after a sufficient number of steps or whenever the solution is close to the optimum; second, even if (\ref{opt::get b^t}) has a closed-form solution as the least-squares problem, it is still beneficial to solve it approximately in order to reduce the computational complexity caused by the pseudo-inverse process (see \cite{DMMS_leastSquare_2011_J} for an example of randomized least-squares approximation). Denoting the quantity

\begin{equation}
\label{eqt::sigma_epsilon}
\sigma_{\epsilon} = \max_{j \in [t]} \epsilon^j,
\end{equation}
we have the following theorem.

\begin{thm}
\label{thm::StoGradMP with inexact gradient and approximated estimation}
Let $w^{\star}$ be a feasible solution of (\ref{opt::general min}). At the $(t+1)$-th iteration of Algorithm \ref{alg:StoGradMP} with inexact gradients and approximated estimations, the expectation of the recovery error is bounded by
\begin{equation}
\E \norm{w^{t+1} - w^{\star}}_2 \leq \kappa^{t+1} \norm{w^0 - w^{\star}}_2  + \frac{1 }{(1-\kappa)} (\sigma_{w^{\star}} + \sigma_e + \sigma_{\epsilon}),
\end{equation}
where $\kappa$ is defined in (\ref{eqt::kappa of StoGradMP}) and is assumed to be strictly less than unity and expectation is taken over all choices of random variables $i_1,...,i_t$. The quantities $\sigma_{w^{\star}}$, $\sigma_e$, and $\sigma_{\epsilon}$ are defined in (\ref{eqt::sigma of StoGradMP}), (\ref{eqt::sigma_e of StoGradMP}), and (\ref{eqt::sigma_epsilon}), respectively.
\end{thm}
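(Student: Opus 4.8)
The plan is to piggyback on the proof of Theorem~\ref{thm::StoGradMP with inexact gradients}, isolating the single new error source $\norm{b^t-b^t_{\opt}}_2\le\epsilon^t$ and showing it propagates through the iteration additively, in exactly the same way the gradient-noise term $\sigma_e$ does. Recall that the per-iteration analysis of StoGradMP splits naturally into three pieces: (i) an \emph{identification/merge} bound, controlling how well the merged support $\widehat\Gamma=\Gamma\cup\Lambda_{\mathrm{prev}}$ captures $\supp_{\oper D}(w^{\star})$ in terms of $\norm{w^t-w^{\star}}_2$ and $\max_{|\Omega|\le 4k}\norm{\oper P_\Omega\nabla f_{i_t}(w^{\star})}_2$ (plus $\norm{e^t}_2$ in the inexact-gradient setting); (ii) an \emph{estimation} bound, using $\oper D$-RSC (Definition~\ref{def:rsc}) and $\oper D$-RSS (Definition~\ref{def:rss}) to show that $b^t_{\opt}$, the exact minimiser of $F$ over $\mathrm{span}(D_{\widehat\Gamma})$, is close to $w^{\star}$; and (iii) a \emph{pruning} bound, showing that $w^{t+1}=\oper P_\Lambda(b^t)$ loses at most a factor $(1+\eta_2)$ relative to the vector fed into the prune step. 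Steps (i) and (ii) are untouched by the approximate estimation; only the input to step (iii) changes from $b^t_{\opt}$ to $b^t$.

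So first I would re-derive the pruning bound with $b^t$ in place of $b^t_{\opt}$. Since $\Lambda=\text{approx}_k(b^t,\eta_2)$, the defining property~\eqref{eqt::approximation definition} together with the consequence~\eqref{eqt::approximation consequence 2} applied with $R=\supp_{\oper D}(w^{\star})$ (note $|R|\le k$ and $w^{\star}\in\oper R(D_R)$) gives $\norm{w^{t+1}-w^{\star}}_2\le(1+\eta_2)\norm{b^t-w^{\star}}_2$ in the exact-projection regime, and the obvious $\eta_2$-modified analogue otherwise — word for word the pruning step of Theorem~\ref{thm::StoGradMP with inexact gradients}, but with $b^t$ everywhere in place of $b^t_{\opt}$. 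A single triangle inequality, $\norm{b^t-w^{\star}}_2\le\norm{b^t_{\opt}-w^{\star}}_2+\epsilon^t$, then lets me substitute back the bound on $\norm{b^t_{\opt}-w^{\star}}_2$ already furnished by steps (i)--(ii). The net effect is the same one-step recursion as in Theorem~\ref{thm::StoGradMP with inexact gradients},
$$
\E\big[\norm{w^{t+1}-w^{\star}}_2\,\big|\,w^t\big]\le\kappa\,\norm{w^t-w^{\star}}_2+\big(\text{i.i.d.\ noise term}\big)+(1+\eta_2)\,\epsilon^t,
$$
with the \emph{same} contraction coefficient $\kappa$ from~\eqref{eqt::kappa of StoGradMP}, since $\epsilon^t$ enters additively and does not multiply $\norm{w^t-w^{\star}}_2$.

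Then I would take total expectation over $i_0,\dots,i_t$, iterate the recursion, and sum the geometric series $\sum_{j=0}^{t}\kappa^{j}\le 1/(1-\kappa)$. Bounding each $\epsilon^j$ (and each $\norm{e^j}_2$) by its maximum over $j\in[t]$ — which is exactly what~\eqref{eqt::sigma_epsilon} and~\eqref{eqt::sigma_e of StoGradMP} encode — yields
$$
\E\norm{w^{t+1}-w^{\star}}_2\le\kappa^{t+1}\norm{w^0-w^{\star}}_2+\frac{1}{1-\kappa}\big(\sigma_{w^{\star}}+\sigma_e+\sigma_{\epsilon}\big),
$$
with $\sigma_{w^{\star}}$, $\sigma_e$ as in~\eqref{eqt::sigma of StoGradMP},~\eqref{eqt::sigma_e of StoGradMP} and the $(1+\eta_2)$ prefactor on $\epsilon^t$ absorbed into (or carried alongside) the definition of $\sigma_{\epsilon}$. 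As with $\sigma_e$, the structural point worth emphasising is that $\epsilon^t$ appears only through the current iterate, so after summing it contributes $\max_j\epsilon^j$ rather than an accumulating sum: the estimation error does not build up over iterations.

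The main obstacle I anticipate is step (iii). Because the prune set $\Lambda$ is chosen with respect to the approximate vector $b^t$ rather than $b^t_{\opt}$, one cannot literally quote the pruning inequality from the earlier proof — the ``best $k$-term'' reference implicit in $\text{approx}_k(\cdot,\eta_2)$ now points at the wrong vector, so the argument must be rerun natively for $b^t$. Fortunately this is routine: inequality~\eqref{eqt::approximation consequence 2} requires only that $w^{\star}$ be a feasible $k$-sparse point and uses no relationship between $b^t$ and $b^t_{\opt}$, so the $\epsilon^t$ gap is inserted entirely by the single triangle inequality above. A secondary bookkeeping point is to confirm that the $(1+\eta_2)$ multiplying $\epsilon^t$ is consistent with the stated $\sigma_{\epsilon}=\max_{j\in[t]}\epsilon^j$; one either folds this constant into $\sigma_{\epsilon}$ or tracks it through the geometric sum, but in neither case does it affect the linear-convergence conclusion.
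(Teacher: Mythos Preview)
Your proposal is correct and matches the paper's approach essentially line for line: the paper too observes that the only change is to insert the triangle inequality $\norm{b^t-w^{\star}}_2\le\norm{b^t_{\opt}-w^{\star}}_2+\epsilon^t$ after the pruning bound (Lemma~\ref{lem::bound l2 w^(t+1)-w*}), then invokes the existing estimation lemma on $b^t_{\opt}$, yielding the same one-step recursion with an additive $\epsilon^t$. Your remark that one should really build on Theorem~\ref{thm::StoGradMP with inexact gradients} (so that $\sigma_e$ is already present) is in fact slightly more careful than the paper, which nominally points back to Theorem~\ref{thm::StoGradMP}; and your observation about the stray $(1+\eta_2)$ factor on $\epsilon^t$ versus the stated $\sigma_{\epsilon}=\max_j\epsilon^j$ is a genuine bookkeeping looseness in the paper's constants, not an error on your part.
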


Theorem \ref{thm::StoGradMP with inexact gradient and approximated estimation} shows the stability of StoGradMP under both the contamination of gradient noise at the proxy step and the approximate optimization at the estimation step. Furthermore, StoGradMP still achieves a linear convergence rate even in the presence of these two sources of noise. Similar to the artifacts of gradient noise, the approximated estimation affects the tolerance rate and not the contraction factor, and the recovery is only impacted by the largest approximated estimation bound (rather than an accumulation over all of the iterations). 

\section{Some estimates}
\label{sec::Estimates}

In this section we investigate some specific problems which require solving an optimization with a sparse constraint and transfer results of Theorems \ref{thm::StoIHT} and \ref{thm::StoGradMP}. 

\subsection{Sparse linear regression}
\label{subsec::Sparse linear regression}

The first problem of interest is the well-studied sparse recovery in which the goal is to recover a $k_0$-sparse vector $w_0$ from noisy observations of the following form:
$$
y = A w_0 + \xi.
$$
Here, the $m \times n$ matrix $A$ is called the design matrix and $\xi$ is the $m$ dimensional vector noise. A natural way to recover $w_0$ from the observation vector $y$ is via solving
\begin{equation}
\label{opt::linear regression}
\min_{w \in \R^n}  \frac{1}{2m}\norm{y - Aw}^2_2 \quad\text{subject to} \quad \norm{w}_0 \leq k,
\end{equation}
where $k$ is the user-defined parameter which is assumed greater than $k_0$. Clearly, this optimization is a special form of (\ref{opt::general min}) 
with $\oper D$ being the collection of standard vectors in $\R^n$ and $F(w) = \frac{1}{2m}\norm{y - Aw}^2_2$. Decompose the vector $y$ into non-overlapping vectors $y_{b_i}$ of size $b$ and denote $A_{b_i}$ as the $b_i \times n$ submatrix of $A$. We can then rewrite $F(w)$ as
$$
F(w) = \frac{1}{M} \sum_{i=1}^M \frac{1}{2b} \norm{y_{b_i} - A_{b_i} w}_2^2 \triangleq \frac{1}{M} \sum_{i=1}^M f_i(w),
$$
where $M = m/b$. In order to apply Theorems \ref{thm::StoIHT} and \ref{thm::StoGradMP} for this problem, we need to compute the contraction coefficient and tolerance parameter which involve the $\oper D$-RSC and $\oper D$-RSS conditions. It is easy to see that these two properties of $F(w)$ and $\{ f_i(w) \}_{i=1}^M$ are equivalent to the RIP studied in \cite{CRT_Stability_2006a_J}. In particular, we require that the matrix $A$ satisfies
$$
\frac{1}{m}\norm{A w}_2^2 \geq (1-\delta_k) \norm{w}_2^2 
$$
for all $k$-sparse vectors $w$. In addition, the matrices $A_{b_i}$, $i=1,...,M$, are also required to obey
$$
\frac{1}{b}\norm{A_{b_i} w}_2^2 \leq (1+\delta_k) \norm{w}_2^2
$$
for all $k$-sparse vectors $w$. Here, $(1+\delta_k)$ and $(1-\delta_k)$ with $\delta_k \in (0,1]$ play the role of $\rho^+_k(i)$ and $\rho^-_k$ in Definitions \ref{def:rsc} and \ref{def:rss}, respectively. For the Gaussian matrix $A$ (entries are i.i.d. $\oper N(0, 1)$), it is well-known that these two assumptions hold as long as $m \geq \frac{Ck \log n}{\delta_k}$ and $b \geq \frac{c k \log n}{\delta_k}$. By setting the block size $b = c k \log n$, the number of blocks $M$ is thus proportional to $\frac{m}{k \log n}$. 

Now using StoIHT to solve (\ref{opt::linear regression}) and applying Theorem \ref{thm::StoIHT}, we set the step size $\gamma = 1$, the approximation error $\eta=1$, and $p(i)=1/M$, $i = 1,...,M$, for simplicity. Thus, the quantities in (\ref{eqt::parameters}) are all the same and equal to $1+\delta_k$. It is easy to verify that the contraction coefficient defined in (\ref{eqt::kappa of StoIHT}) is $\kappa = 2\sqrt{2\delta_{3k} - \delta_{3k}^2}$. One can obtain $\kappa \leq 3/4$ when $\delta_{3k} \leq 0.07$, for example. In addition, since $w_0$ is the feasible solution of (\ref{opt::linear regression}), the tolerance parameter $\sigma_{w_0}$ defined in (\ref{eqt::sigma of StoIHT}) can be rewritten as 
$$
\sigma_{w_0} = 2 \E_i \max_{|\Omega| \leq 3k} \frac{1}{b} \norm{\oper P_{\Omega} A_{b_i}^* \xi_{b_i}}_2 \leq \frac{2}{b} \sqrt{3k} \max_{i \in [M]} \max_{j \in [n]} |\inner{A_{b_i,j},\xi_{b_i}}|,
$$ 
where $A_{b_i,j}$ is the $j$-th column of the matrix $A_{b_i}$. For stochastic noise $\xi \sim \oper N(0,\sigma^2 I_m)$, it is easy to verify that $\sigma_{w_0} \leq c' \sqrt{\frac{\sigma^2 k \log n}{b}}$ with probability at least $1-n^{-1}$. 

Using StoGradMP to solve (\ref{opt::linear regression}) with the same setting as above, we write the contraction coefficient in (\ref{eqt::kappa of StoGradMP}) as $\kappa = 2\sqrt{\frac{2\delta_{4k} (1+\delta_{4k})}{(1-\delta_{4k})^2}}$, which is less than $3/4$ if $\delta_{4k} \leq 0.05$. The tolerance parameter $\sigma_{w_0}$ in (\ref{eqt::sigma of StoGradMP}) can be simplified similarly as in StoIHT. We now provide the following corollary based on what we have discussed.
\begin{cor} [for StoIHT and StoGradMP]
\label{cor::apply to sparse linear regression} 
Assume $A \in \R^{m\times n}$ satisfies the $\oper D$-RSC and $\oper D$-RSS assumptions and $\xi \sim \oper N(0,\sigma^2)$. Then with probability at least $1-n^{-1}$, the error at the $(t+1)$-th iterate of the StoIHT and StoGradMP algorithms is bounded by
$$
\E \norm{w^{t+1} - w_0}_2 \leq (3/4)^{t+1} \norm{w_0}_2 + c \sqrt{\frac{\sigma^2 k_0 \log n}{b}}.
$$
\end{cor}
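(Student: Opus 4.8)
The plan is to specialize Theorems~\ref{thm::StoIHT} and~\ref{thm::StoGradMP} to the regression objective $F(w)=\frac{1}{2m}\norm{y-Aw}_2^2=\frac{1}{M}\sum_{i=1}^M f_i(w)$ with $f_i(w)=\frac{1}{2b}\norm{y_{b_i}-A_{b_i}w}_2^2$, $\oper D$ the standard basis of $\R^n$, and the feasible point $w^\star=w_0$. The first step is to match hypotheses: as recorded just before the corollary, the $\oper D$-RSC inequality for $F$ with constant $\rho^-_k$ is exactly the lower RIP bound $\frac1m\norm{Aw}_2^2\ge(1-\delta_k)\norm{w}_2^2$, and the $\oper D$-RSS inequality for each $f_i$ with constant $\rho^+_k(i)$ is the upper RIP bound $\frac1b\norm{A_{b_i}w}_2^2\le(1+\delta_k)\norm{w}_2^2$; so we take $\rho^-_k=1-\delta_k$ and $\rho^+_k(i)=1+\delta_k$ for every $i$. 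For a Gaussian $A$, standard RIP estimates guarantee both events hold once $m\gtrsim k\log n/\delta_k$ and $b\gtrsim k\log n/\delta_k$, i.e.\ with the stated scaling $b\asymp k\log n$ and $M\asymp m/(k\log n)$.

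Second, I would evaluate the contraction coefficients at the chosen parameters. With $\gamma=1$, $\eta=1$, $p(i)=1/M$ the three quantities in~\eqref{eqt::parameters} all equal $1+\delta_{3k}$, so~\eqref{eqt::kappa of StoIHT} collapses to
\[
\kappa=2\sqrt{1-(2-\alpha_{3k})\rho^-_{3k}}=2\sqrt{1-(1-\delta_{3k})^2}=2\sqrt{2\delta_{3k}-\delta_{3k}^2},
\]
which is $\le 3/4$ as soon as $\delta_{3k}\le0.07$. Likewise, with $\eta_1=\eta_2=1$ and $p(i)=1/M$, formula~\eqref{eqt::kappa of StoGradMP} simplifies to $\kappa=2\sqrt{\tfrac{\rho^+_{4k}}{\rho^-_{4k}}\big(\tfrac{\rho^+_{4k}}{\rho^-_{4k}}-1\big)}=2\sqrt{\tfrac{2\delta_{4k}(1+\delta_{4k})}{(1-\delta_{4k})^2}}\le 3/4$ when $\delta_{4k}\le0.05$. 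In both cases $1/(1-\kappa)\le4$, a constant to be absorbed below, and $\norm{w^0-w_0}_2=\norm{w_0}_2$ since the algorithms are initialized at $w^0=0$.

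Third, and this is the only step needing real work, I would bound the tolerance parameter. Since $w_0$ is $k_0$-sparse with $k_0\le k$ it is feasible, and $\nabla f_i(w_0)=\tfrac1b A_{b_i}^*(A_{b_i}w_0-y_{b_i})=-\tfrac1b A_{b_i}^*\xi_{b_i}$. Hence~\eqref{eqt::sigma of StoIHT} (and, up to the explicit constant $C$, also~\eqref{eqt::sigma of StoGradMP}) reduces to $\sigma_{w_0}\le c'\,\E_i\max_{|\Omega|\le 3k}\tfrac1b\norm{\oper P_{\Omega} A_{b_i}^*\xi_{b_i}}_2$. Bounding a coordinate projection onto $\le 3k$ atoms by $\sqrt{3k}$ times the largest entry gives $\sigma_{w_0}\le \tfrac{c'\sqrt{3k}}{b}\max_{i\in[M]}\max_{j\in[n]}\abs{\inner{A_{b_i,j},\xi_{b_i}}}$. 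Conditioning on $A$, each $\inner{A_{b_i,j},\xi_{b_i}}$ is Gaussian with variance $\sigma^2\norm{A_{b_i,j}}_2^2\le c\,\sigma^2 b$ on the RIP event, so a Gaussian tail bound and a union bound over the $nM\le nm$ pairs yield $\max_{i,j}\abs{\inner{A_{b_i,j},\xi_{b_i}}}\le c''\sqrt{\sigma^2 b\log n}$ with probability at least $1-n^{-1}$ (the $\log M$ is swallowed into $\log n$). Substituting gives $\sigma_{w_0}\le c\sqrt{\sigma^2 k\log n/b}$, which is the claimed bound once $k\asymp k_0$. Plugging $\kappa\le3/4$ and this estimate into Theorems~\ref{thm::StoIHT} and~\ref{thm::StoGradMP} with $w^\star=w_0$ finishes the argument.

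The main obstacle is precisely this last step: one must arrange the probabilistic bookkeeping so that the RIP-type bounds for $A$ and for \emph{every} block $A_{b_i}$, together with the uniform sub-Gaussian control of all $nM$ inner products $\inner{A_{b_i,j},\xi_{b_i}}$, hold simultaneously on a single event of probability at least $1-n^{-1}$. This is what forces the block size to scale like $k\log n$, so that the union bound over blocks and coordinates stays under control, and it is where the constants in the statement are ultimately determined.
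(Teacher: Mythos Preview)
Your proposal is correct and follows essentially the same route as the paper: identify $\rho^-_k=1-\delta_k$, $\rho^+_k(i)=1+\delta_k$, plug the parameter choices $\gamma=1$, $\eta=\eta_1=\eta_2=1$, $p(i)=1/M$ into the contraction formulas~\eqref{eqt::kappa of StoIHT} and~\eqref{eqt::kappa of StoGradMP} to get $\kappa\le 3/4$, and bound the tolerance via $\nabla f_i(w_0)=-\tfrac1b A_{b_i}^*\xi_{b_i}$, the coordinate-projection estimate $\norm{\oper P_\Omega v}_2\le\sqrt{|\Omega|}\norm{v}_\infty$, and a Gaussian tail plus union bound. If anything, you supply more detail on the probabilistic step (conditioning on $A$, variance $\le c\sigma^2 b$, union over $nM$ pairs) than the paper, which simply asserts the conclusion.
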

We describe the convergence result of the two algorithms in one corollary since their results share the same form with the only difference in the constant $c$. One can see that for a sufficient number of iterations, the first term involving $\norm{w_0}_2$ is negligible and the recovery error only depends on the second term. When the noise is absent, both algorithms guarantee recovery of the exact $w_0$. The recovery error also depends on the block size $b$. When $b$ is small, more error is expected, and the error decreases as $b$ increases. This of course matches our intuition. We emphasize that the deterministic IHT and GradMP algorithms deliver the same recovery error with $b$ replaced by $m$.

\subsection{Low-rank matrix recovery}
\label{subsec::Low-rank matrix recovery}

We consider the high-dimensional matrix recovery problem in which the observation model has the form 
$$
y_j = \inner{A_j, W_0} + \xi_j  \quad, \quad j = 1,...,m,
$$
where $W_0$ is the $n_1 \times n_2$ unknown rank-$k_0$ matrix, each measurement $A_j$ is an $m\times n_1$ matrix, and the noise $\xi_j$ is assumed $N(0,\sigma^2)$. Noting that $\frac{1}{2m} \sum_{j=1}^m (y_j - \inner{A_j,W})^2 = \frac{1}{2m}\norm{y - \oper A(W)}_2^2$, the standard approach to recover $W_0$ is to solve the minimization 
\begin{equation}
\label{opt::matrix recovery}
\min_{W \in \R^{n_1\times n_2}} \frac{1}{2m}\norm{y - \oper A(W)}_2^2\quad\text{subject to} \quad\rank(W) \leq k,
\end{equation}
with $k$ assumed greater than $k_0$. Here, $\oper A$ is the linear operator. In this problem, the set $\oper D$ consists of infinitely many unit-normed rank-one matrices 
and the objective function can be written as a summation of sub-functions:
$$
F(W) = \frac{1}{M} \sum_{i=1}^M f_i(W) = \frac{1}{M} \sum_{i=1}^M \left( \frac{1}{2b} \sum_{j=(i-1)b+1}^{ib} (y_j - \inner{A_j,W})^2 \right) \triangleq \frac{1}{M} \sum_{i=1}^M \frac{1}{2b} \norm{y_{b_i} - \oper A_i (W)}_2^2,
$$
where $m = Mb$ (assume $b$ is integer). Each $f_i(W)$ accounts for a collection (or block) of observations $y_{b_i}$ of size $b$. In this case, the $\oper D$-RSC and $\oper D$-RSS properties are equivalent to the matrix-RIP \cite{CP_MatrixRec_2009_J}, which holds for a wide class of random operators $\oper A$. In particular, we require
$$
\frac{1}{m} \norm{\oper A(W)}_2^2 \geq (1-\delta_k) \norm{W}_F^2
$$ 
for all rank-$k$ matrices $W$. In addition, the linear operators $\oper A_{i}$ are required to obey
$$
\frac{1}{b} \norm{\oper A_i(W)}_2^2 \leq (1+\delta_k) \norm{W}_F^2
$$
for all rank-$k$ matrices $W$. Here, $(1+\delta_k)$ and $(1-\delta_k)$ with $\delta_k \in (0,1]$ play the role of $\rho^+_k(i)$ and $\rho^-_k$ in Definitions \ref{def:rsc} and \ref{def:rss}, respectively. For the random Gaussian linear operator $\oper A$ (vectors $A_i$ are i.i.d. $\oper N(0, I)$), it is well-known that these two assumptions hold as long as $m \geq \frac{Ck (n_1+n_2)}{\delta_k}$ and $b \geq \frac{c k (n_1+n_2)}{\delta_k}$. By setting the block size $b = c k (n_1+n_2)$, the number of blocks $M$ is thus proportional to $\frac{m}{k (n_1 + n_2)}$.

In this section, we consider applying results of Theorems \ref{thm::StoIHT with inexact gradients} and \ref{thm::StoGradMP with inexact gradient and approximated estimation}. To do so, we need to compute the contraction coefficients and tolerance parameter. We begin with Theorem \ref{thm::StoIHT with inexact gradients}, which holds for the StoIHT algorithm. Similar to the previous section, $\kappa$ in (\ref{eqt::kappa of StoIHT}) can have a similar form. However, in the matrix recovery problem, SVD computations are required at each iteration which is often computationally expensive. There has been a vast amount of research focusing on approximation methods that perform nearly as good as exact SVD but with much faster computation. Among them are the randomized SVD \cite{halko2011finding} that we will employ in the experimental section. For simplicity, we set the step size $\gamma = 1$ and $p(i) = 1/M$ for all $i$. Thus, quantities in (\ref{eqt::parameters}) are the same and equal to $1+\delta_k$. Rewriting $\kappa$ in (\ref{eqt::kappa of StoIHT}), we have
$$
\kappa = 2 \sqrt{2\delta_{3k} - \delta^2_{3k}} + \sqrt{(\eta^2-1)(\delta_{3k}^2 + 4\delta_{3k} )},
$$
where we recall $\eta$ is the projection error. Setting $\kappa \leq 3/4$ by allowing the first term to be less than $1/2$ and the second term less than $1/4$, we obtain $\delta_{4k} \leq 0.03$ and the approximation error $\eta$ is allowed up to $1.19$.

The next step is to evaluate the tolerance parameter $\sigma_{W_0}$ in (\ref{eqt::sigma of StoIHT}).  The parameter $\sigma_{W_0}$ can be read as
\begin{equation}
\begin{split}
\nonumber
\sigma_{W_0} &= 2 \E_i \max_{|\Omega| \leq 4k} \frac{1}{b} \norm{\oper P_{\Omega} \oper A_i^* (\xi_{b_i})}_F + \sqrt{\eta^2-1} \E_i \frac{1}{b} \norm{\oper A_i^* (\xi_{b_i})}_F \\
&\leq  \frac{2}{b} \sqrt{4k} \max_i \norm{\oper A_i^* (\xi_{b_i})} + \frac{1}{b}\sqrt{\eta^2-1} \sqrt{n} \max_i \norm{\oper A_i^* (\xi_{b_i})}.
\end{split}
\end{equation}
For stochastic noise $\xi \sim \oper N(0,\sigma^2 I)$, it is shown in \cite{CP_MatrixRec_2009_J}, Lemma 1.1 that $\norm{\oper A_i^* (\xi_{b_i})} \leq c \sqrt{\sigma^2 n b}$ with probability at least $1-n^{-1}$ where $n = \max\{n_1,n_2\}$. Therefore, $\sigma_{W_0} \leq c \left( \sqrt{\frac{\sigma^2 k n}{b}} + \sqrt{\frac{(\eta^2-1)\sigma^2 n^2}{b}}\right)$. In addition, the parameter $\sigma_e$ in (\ref{eqt::sigma_e of StoIHT}) is estimated as
$$
\sigma_e \leq \max_j \left(2\max_{|\Omega| \leq 3k} \norm{\oper P_{\Omega} E^j}_F + \sqrt{\eta^2-1} \norm{E^j}_F\right) \leq \max_j \left(2\sqrt{3k} \norm{E^j} + \sqrt{\eta^2-1} \norm{E^j}_F\right),
$$
where we recall that $E^j$ is the noise matrix that might contaminate the gradient at the $j$-th iteration. Applying Theorem \ref{thm::StoIHT with inexact gradients} leads to the following corollary.
  
\begin{cor} [for StoIHT]
Assume the linear operator $\oper A$ satisfies the $\oper D$-RSC and $\oper D$-RSS assumptions and $\xi \sim \oper N(0,\sigma^2)$. Set $p(i)=1/M$ for $i = 1,...,M$ and $\gamma = 1$. Then with probability at least $1-n^{-1}$, the error at the $(t+1)$-th iterate of the StoIHT algorithm is bounded by
\begin{equation}
\begin{split}
\nonumber
\E \norm{W^{t+1} - W_0}_F \leq (3/4)^{t+1} \norm{W_0}_F &+ c \left( \sqrt{\frac{\sigma^2 k n}{b}} + \sqrt{\frac{(\eta^2-1)\sigma^2 n^2}{b}}\right) \\
&+ 4\max_{j \in [t]} \left( 2\sqrt{3k} \norm{E^j} + \sqrt{\eta^2-1} \norm{E^j}_F \right).
\end{split}
\end{equation}
\end{cor}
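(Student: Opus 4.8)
The plan is to obtain this corollary as a direct specialization of Theorem~\ref{thm::StoIHT with inexact gradients} to the low-rank model~\eqref{opt::matrix recovery}, substituting the problem-specific values of the contraction coefficient and the two tolerance parameters. First I would record that~\eqref{opt::matrix recovery} is an instance of~\eqref{opt::general min}: $\oper D$ is the set of unit-norm rank-one matrices, and with the block decomposition $f_i(W)=\frac{1}{2b}\norm{y_{b_i}-\oper A_i(W)}_2^2$ we have $\nabla f_i(W)=-\frac1b\oper A_i^*(y_{b_i}-\oper A_i(W))$, hence $\nabla f_i(W')-\nabla f_i(W)=\frac1b\oper A_i^*\oper A_i(W'-W)$. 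The stated matrix-RIP inequalities then give, for every $i$, the $\oper D$-RSS constant $\rho^+_k(i)=1+\delta_k$ and the $\oper D$-RSC constant $\rho^-_k=1-\delta_k$; with $p(i)=1/M$ and $\gamma=1$ all of the derived quantities in~\eqref{eqt::parameters} collapse, so that $\alpha_{3k}=\rho^+_{3k}=\overline{\rho}^+_{3k}=1+\delta_{3k}$ and $\min_i Mp(i)=1$.

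Next I would plug these into~\eqref{eqt::kappa of StoIHT}. The first radicand reduces to $1-(2-(1+\delta_{3k}))(1-\delta_{3k})=1-(1-\delta_{3k})^2=2\delta_{3k}-\delta_{3k}^2$ and the second to $1+(1+\delta_{3k})^2-2(1-\delta_{3k})=4\delta_{3k}+\delta_{3k}^2$, so $\kappa=2\sqrt{2\delta_{3k}-\delta_{3k}^2}+\sqrt{(\eta^2-1)(4\delta_{3k}+\delta_{3k}^2)}$. Forcing the first summand below $1/2$ gives $2\delta_{3k}-\delta_{3k}^2\le 1/16$, which holds once $\delta_{4k}\le 0.03$ (since $\delta_{3k}\le\delta_{4k}$), and then forcing the second summand below $1/4$ requires $(\eta^2-1)(4\delta_{3k}+\delta_{3k}^2)\le 1/16$, allowing $\eta$ up to roughly $1.19$; together these give $\kappa\le 3/4$, so $\kappa^{t+1}\le(3/4)^{t+1}$ and $\frac{1}{1-\kappa}\le 4$.

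It then remains to estimate the tolerance terms. Because $y_{b_i}=\oper A_i(W_0)+\xi_{b_i}$ we have $\nabla f_i(W_0)=-\frac1b\oper A_i^*(\xi_{b_i})$, so~\eqref{eqt::sigma of StoIHT} reads $\sigma_{W_0}=\frac2b\,\E_i\max_{|\Omega|\le 3k}\norm{\oper P_\Omega\oper A_i^*(\xi_{b_i})}_F+\frac{\sqrt{\eta^2-1}}{b}\E_i\norm{\oper A_i^*(\xi_{b_i})}_F$; bounding the Frobenius norm of a rank-$\le 3k$ matrix by $\sqrt{3k}$ times its spectral norm and that of the full $n_1\times n_2$ matrix by $\sqrt n$ times its spectral norm yields $\sigma_{W_0}\le\big(\tfrac{2\sqrt{3k}}{b}+\tfrac{\sqrt{(\eta^2-1)n}}{b}\big)\max_i\norm{\oper A_i^*(\xi_{b_i})}$. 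For $\xi\sim\oper N(0,\sigma^2 I)$, Lemma~1.1 of~\cite{CP_MatrixRec_2009_J} gives $\norm{\oper A_i^*(\xi_{b_i})}\le c\sqrt{\sigma^2 nb}$ for each fixed $i$ with high probability; a union bound over the $M=\Order(m/(k(n_1+n_2)))$ blocks (polynomial in $n$) keeps the total failure probability at $n^{-1}$ after enlarging $c$, so $\sigma_{W_0}\le c\big(\sqrt{\sigma^2 kn/b}+\sqrt{(\eta^2-1)\sigma^2 n^2/b}\big)$. The same rank-to-spectral comparison applied to~\eqref{eqt::sigma_e of StoIHT} (with $\gamma=1$, $\min_i Mp(i)=1$) gives $\sigma_e\le\max_{j\in[t]}\big(2\sqrt{3k}\norm{E^j}+\sqrt{\eta^2-1}\norm{E^j}_F\big)$. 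Feeding $\kappa\le 3/4$, $\frac{1}{1-\kappa}\le 4$, these two bounds, and the initialization $W^0=0$ (so $\norm{W^0-W_0}_F=\norm{W_0}_F$) into the conclusion of Theorem~\ref{thm::StoIHT with inexact gradients} produces exactly the stated inequality, with the final constant $c$ absorbing the factor $\frac{1}{1-\kappa}$ in front of $\sigma_{W_0}$.

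The genuinely algebraic steps (simplifying $\kappa$, the norm comparisons) are routine; the step needing care is the probabilistic estimate, where one must control $\norm{\oper A_i^*(\xi_{b_i})}$ uniformly over all $M$ blocks — this is what pins down the $1-n^{-1}$ probability and the universal constant $c$ — and one should double-check that the sparsity levels ($3k$ versus $4k$) entering $\kappa$, $\sigma_{W_0}$, and $\sigma_e$ are kept consistent, noting they differ only by a constant factor that is absorbed into $c$ in the final bound.
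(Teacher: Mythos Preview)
Your proposal is correct and follows essentially the same route as the paper: the corollary is obtained by specializing Theorem~\ref{thm::StoIHT with inexact gradients} to the matrix-RIP setting, computing $\kappa$ exactly as you do, bounding $\sigma_{W_0}$ via the rank-to-spectral-norm comparison together with Lemma~1.1 of~\cite{CP_MatrixRec_2009_J}, and bounding $\sigma_e$ the same way. Your explicit union bound over the $M$ blocks and your remark about the $3k$/$4k$ discrepancy are both apt---the paper is slightly loose on these points---but they do not change the argument.
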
  
The discussion after Corollary \ref{cor::apply to sparse linear regression} for the vector recovery can also be applied here. For a sufficient number of iterations, the recovery error is naturally controlled by three factors: the measurement noise $\sigma$, the approximation projection parameter $\eta$, and the largest gradient noise $E^j$. In the absence of these three parameters, the recovery is exact. When $\eta=1$ and $E^j = 0$, $j=0,...,t$, the error has the same structure as the convex nuclear norm minimization method \cite{CP_MatrixRec_2009_J} which has been shown to be optimal. 

Moving to the StoGradMP algorithm, setting $p(i) = 1/M$ for $i=1,...,M$ again for simplicity, we can write the contraction coefficient in (\ref{eqt::kappa of StoGradMP}) as
$$
\kappa = (1+\eta_2) \sqrt{\frac{1+\delta_{4k}}{1-\delta_{4k}}} \left( \sqrt{\frac{\frac{2\eta_1^2-1}{\eta^2_1} (1+\delta_{4k}) - (1-\delta_{4k})}{1-\delta_{4k}}}  + \frac{\sqrt{\eta^2_1-1}}{\eta_1} \right).
$$
If we allow for example the projection error $\eta_1 = 1.01$ and $\eta_2 = 1.01$ and require $\kappa \leq 0.9$, simple algebra gives us $\delta_{4k} \leq 0.03$ 
In addition, for stochastic noise $\xi \sim \oper N(0,\sigma^2 I)$, the tolerance parameter $\sigma_{W_0}$ in (\ref{eqt::sigma of StoGradMP}) can be read as 
$$
\sigma_{W_0} = c (1+\eta_2) \max_{i \in [M], |\Omega| \leq 4k}\norm{\oper P_{\Omega} \oper A_i^* (\xi_{b_i})}_F \leq c_1  (1+\eta_2) \sqrt{4k} \max_{i \in [M]} \norm{\oper A_i^* (\xi_{b_i})} \leq c_2  (1+\eta_2) \sqrt{\frac{\sigma k n}{b}}
$$
with probability at least $1-n^{-1}$. Again, the last inequality is due to \cite{CP_MatrixRec_2009_J}. Now applying Theorem \ref{thm::StoGradMP with inexact gradient and approximated estimation}, where we recall the parameter $\sigma_e$ in (\ref{eqt::sigma_e of StoGradMP}) is $\sigma_e = \max_j \norm{E^j}_F$ and $\epsilon^j$ is the optimization error at the estimation step, we have the following corollary. 

\begin{cor} [for StoGradMP]
Assume the linear operator $\oper A$ satisfies the $\oper D$-RSC and $\oper D$-RSS assumptions and $\xi \sim \oper N(0,\sigma^2)$. Set $p(i)=1/M$ for $i=1,...,M$. Then with probability at least $1-n^{-1}$, the error at the $(t+1)$-th iterate of the StoGradMP algorithm is bounded by
\begin{equation}
\begin{split}
\nonumber
\E \norm{W^{t+1} - W_0}_F \leq (0.9)^{t+1} \norm{W_0}_F &+ c (1+\eta_2) \sqrt{\frac{\sigma^2 k n}{b}} + 4\max_{j \in [t]} \norm{E^j}_F + 4 \max_{j \in [t]}\epsilon^j.
\end{split}
\end{equation}
\end{cor}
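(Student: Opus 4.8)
This corollary is a direct specialization of Theorem~\ref{thm::StoGradMP with inexact gradient and approximated estimation} to the block loss $F(W) = \frac{1}{M}\sum_{i=1}^M \frac{1}{2b}\norm{y_{b_i} - \oper A_i(W)}_2^2$, so the plan has three stages: (i) convert the matrix-RIP hypotheses into the $\oper D$-RSC and $\oper D$-RSS constants and check that $\kappa$ is bounded by an absolute constant strictly below $1$; (ii) estimate the three tolerance quantities $\sigma_{W_0}$, $\sigma_e$, $\sigma_\epsilon$; and (iii) substitute into the theorem. For step (i), under the stated matrix-RIP, $F$ satisfies $\oper D$-RSC with $\rho^-_{4k} = 1-\delta_{4k}$ and each $f_i$ satisfies $\oper D$-RSS with $\rho^+_{4k}(i) = 1+\delta_{4k}$, so with $p(i)=1/M$ we have $Mp(i)=1$ and $\alpha_{4k} = \rho^+_{4k} = \overline\rho^+_{4k} = 1+\delta_{4k}$. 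Plugging these into~\eqref{eqt::kappa of StoGradMP} reproduces exactly the displayed formula for $\kappa$; taking $\eta_1=\eta_2=1.01$ and carrying out the elementary monotonicity check in $\delta_{4k}$ shows $\kappa \le 0.9$ once $\delta_{4k}\le 0.03$, so $(1-\kappa)^{-1}$ is an absolute constant. Since the algorithm is initialized at $W^0=0$, the leading term is $\kappa^{t+1}\norm{W^0-W_0}_F = (0.9)^{t+1}\norm{W_0}_F$.

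For step (ii), at the feasible point $W_0$ one has $\nabla f_i(W_0) = -\frac1b\oper A_i^*(\xi_{b_i})$, and since the orthogonal projection onto a span of at most $4k$ rank-one atoms has Frobenius norm at most $\sqrt{4k}$ times the spectral norm of its argument, definition~\eqref{eqt::sigma of StoGradMP} (with $\min_i Mp(i)=1$) gives $\sigma_{W_0} \le C(1+\eta_2)\sqrt{4k}\,\max_{i\in[M]}\frac1b\norm{\oper A_i^*(\xi_{b_i})}$ with $C$ an absolute constant. Applying Lemma~1.1 of~\cite{CP_MatrixRec_2009_J} to each block and taking a union bound over the $M\le m$ blocks yields $\norm{\oper A_i^*(\xi_{b_i})}\le c\sqrt{\sigma^2 n b}$ simultaneously for all $i$ with probability at least $1-n^{-1}$, where $n=\max\{n_1,n_2\}$; hence $\sigma_{W_0}\le c'(1+\eta_2)\sqrt{\sigma^2 k n/b}$. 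The other two terms are read off directly: $\sigma_e = \frac{\max_i p(i)}{\rho^-_{4k}\min_i p(i)}\max_{j\in[t]}\norm{E^j}_F = (1-\delta_{4k})^{-1}\max_{j\in[t]}\norm{E^j}_F$ from~\eqref{eqt::sigma_e of StoGradMP}, and $\sigma_\epsilon = \max_{j\in[t]}\epsilon^j$ from~\eqref{eqt::sigma_epsilon}.

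For step (iii), inserting these bounds into the conclusion of Theorem~\ref{thm::StoGradMP with inexact gradient and approximated estimation} and using that $(1-\kappa)^{-1}$ and $(1-\delta_{4k})^{-1}$ are absolute constants (which, together with $C$, are absorbed into the generic constants $c$ and $4$ appearing in the statement) gives
\[
\E\norm{W^{t+1}-W_0}_F \le (0.9)^{t+1}\norm{W_0}_F + c(1+\eta_2)\sqrt{\tfrac{\sigma^2 k n}{b}} + 4\max_{j\in[t]}\norm{E^j}_F + 4\max_{j\in[t]}\epsilon^j .
\]
The only nonroutine ingredient is the high-probability spectral-norm control of the block operators $\oper A_i^*(\xi_{b_i})$ and the correct propagation of the $\sqrt{4k}$ projection factor and the $1/b$ normalization through the union bound over blocks; once that is in place the result is pure substitution into the already-proved theorem.
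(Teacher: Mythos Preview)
Your proposal is correct and follows essentially the same approach as the paper: specialize Theorem~\ref{thm::StoGradMP with inexact gradient and approximated estimation} to the block least-squares loss, identify $\rho^-_{4k}=1-\delta_{4k}$ and $\rho^+_{4k}=1+\delta_{4k}$, choose $\eta_1=\eta_2=1.01$ and $\delta_{4k}\le 0.03$ to force $\kappa\le 0.9$, bound $\sigma_{W_0}$ via the $\sqrt{4k}$ projection-to-spectral-norm inequality together with the Gaussian operator-norm bound from~\cite{CP_MatrixRec_2009_J}, and read off $\sigma_e$ and $\sigma_\epsilon$ directly. If anything, you are slightly more careful than the paper's own discussion (you make the union bound over the $M$ blocks explicit and you track the $(1-\delta_{4k})^{-1}$ factor in $\sigma_e$ that the paper silently absorbs).
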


\section{Numerical experiments}
\label{sec::experiments}

In this section we present some experimental results comparing our proposed stochastic methods to their deterministic counterparts.  Our goal is to explore several interesting aspects of improvements and trade-offs; we have not attempted to optimize algorithm parameters.  Unless otherwise specified, all experiments are run with at least $50$ trials, ``exact recovery'' is obtained when the signal recovery error $\|w-\hat{w}\|_2$ drops below $10^{-6}$, and the plots illustrate the $10\%$ trimmed mean. For the approximation error versus epoch (or iteration) plots, the trimmed mean is calculated at each epoch (or iteration) value by excluding the highest and lowest  $5\%$ of the error values (rounding when necessary). For the approximation error versus CPU time plots, the trimmed mean computation is the same, except the CPU time values corresponding to the excluded error values are also excluded from the mean CPU time. We begin with experiments in the compressed sensing setting, and follow with application to the low-rank matrix recovery problem.

\subsection{Sparse vector recovery}

The first setting we explored is standard compressed sensing which has been studied in Subsection \ref{subsec::Sparse linear regression}. Unless otherwise specified, the vector has dimension $256$, and its non-zero entries are i.i.d. standard Gaussian. The signal is measured with an $m \times 256$ i.i.d. standard Gaussian measurement matrix.  First, we compare signal recovery as a function of the number of measurements used, for various sparsity levels $s$.  Each algorithm terminates upon convergence or upon a maximum of $500$ epochs\footnote{We refer to an epoch as the number of iterations needed to use $m$ rows.  Thus for deterministic non-blocked methods, an epoch is one iteration, whereas for a method using blocks of size $b$, an epoch is $m/b$ iterations.}.  We used a block size of $b = \min(s, m)$ for sparsity level $k_0$ and number of measurements $m$, except when $k_0=4$ and $m>5$ we used $b=8$ in order to obtain consistent convergence. Specifically, this means that $b$ measurements were used to form the signal proxy at each iteration of the stochastic algorithms.  For the IHT and StoIHT algorithms, we use a step size of $\gamma =1$.  The results for IHT and StoIHT are shown in Figure ~\ref{fig1} and GradMP and StoGradMP in Figure~\ref{fig2}.  Here we see that with these parameters, StoIHT requires far fewer measurements than IHT to recover the signal, whereas GradMP and StoGradMP are comparable.

\begin{figure}[ht]
\begin{tabular}{cc}
\includegraphics[height=2.25in,width=2.9in]{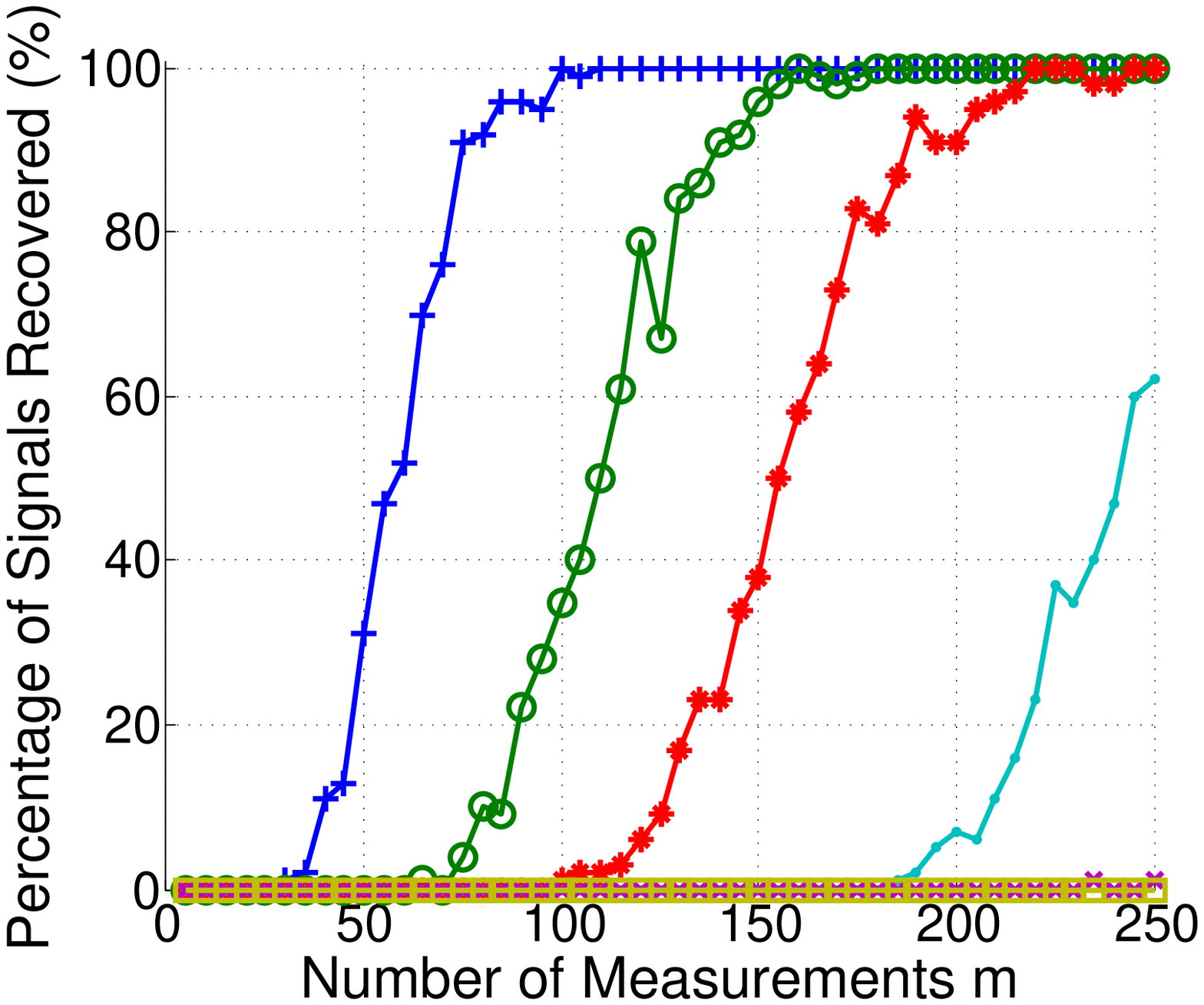} & 
\includegraphics[height=2.25in,width=3.22in]{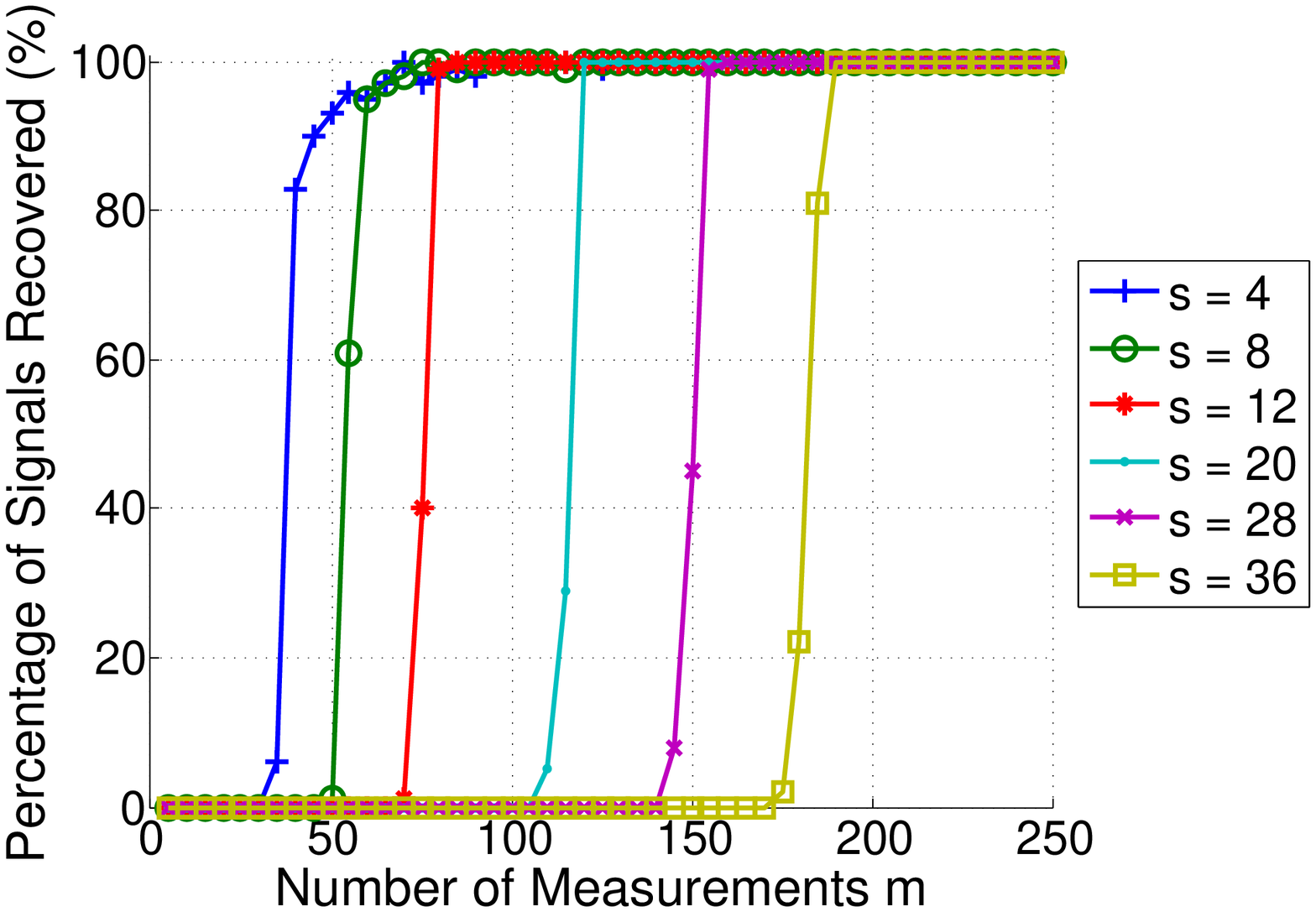}
\end{tabular}
\caption{Sparse Vector Recovery: Percent recovery as a function of the number of measurements for IHT (left) and StoIHT (right) for various sparsity levels $k_0$.\label{fig1}}
\end{figure}

\begin{figure}[ht]
\begin{tabular}{cc}
\includegraphics[height=2.25in,width=2.9in]{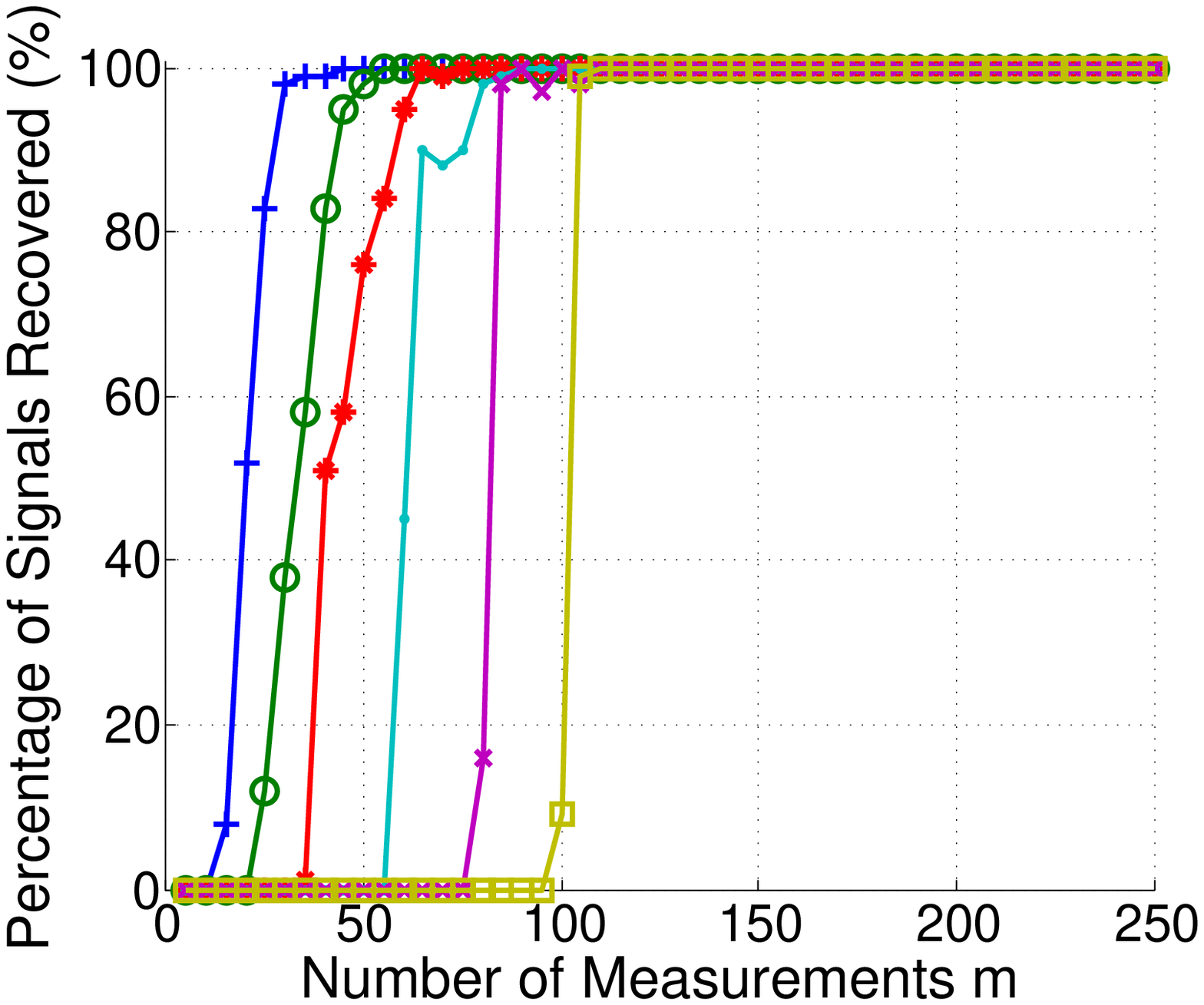} & \includegraphics[height=2.25in,width=3.22in]{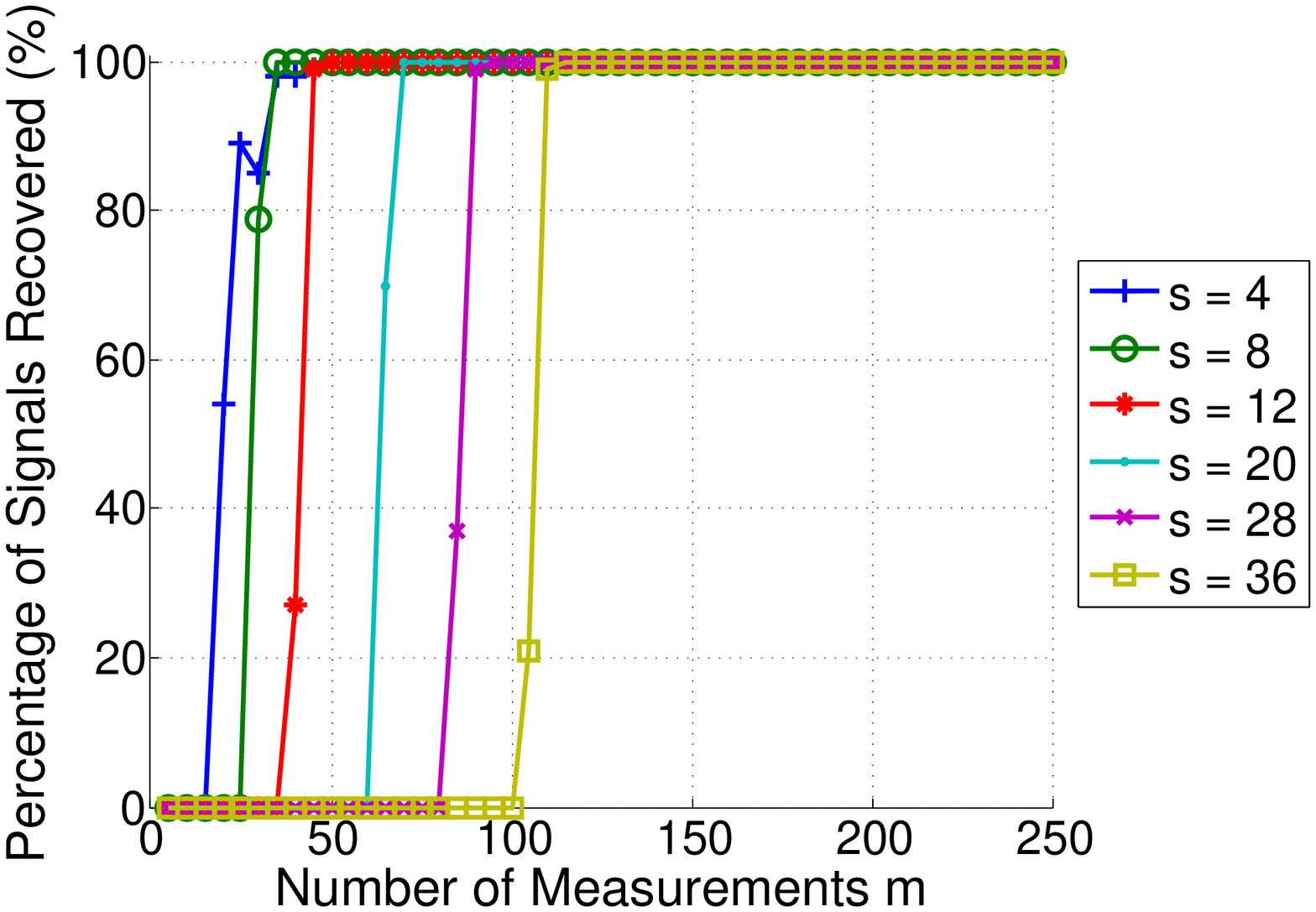}
\end{tabular}
\caption{Sparse Vector Recovery: Percent recovery as a function of the number of measurements for GradMP (left) and StoGradMP (right) for various sparsity levels $k_0$.\label{fig2}}
\end{figure}

Next we explore how the choice of block size affects performance.  We employ the same setup as described above, only now we fix the number of measurements $m$ ($m=180$ for the IHT methods and $m=80$ for the GradMP methods), allow a maximum of 100 epochs, and use various block sizes in the stochastic algorithms.  The sparsity of the signal is $k_0=8$.  The results are depicted for both methods in Figure~\ref{fig3}.  Here we see that in both cases, the deterministic methods seem to offer intermediate performance, outperforming some block sizes and underperforming others.  It is interesting that the StoIHT method seems to prefer larger block sizes whereas the StoGradMP seems to prefer smaller sizes.  This is likely because StoGradMP, even using only a few gradients may still estimate the support accurately, and thus the signal accurately.

\begin{figure}[ht]
\begin{tabular}{cc}
\includegraphics[height=2.25in,width=3.05in]{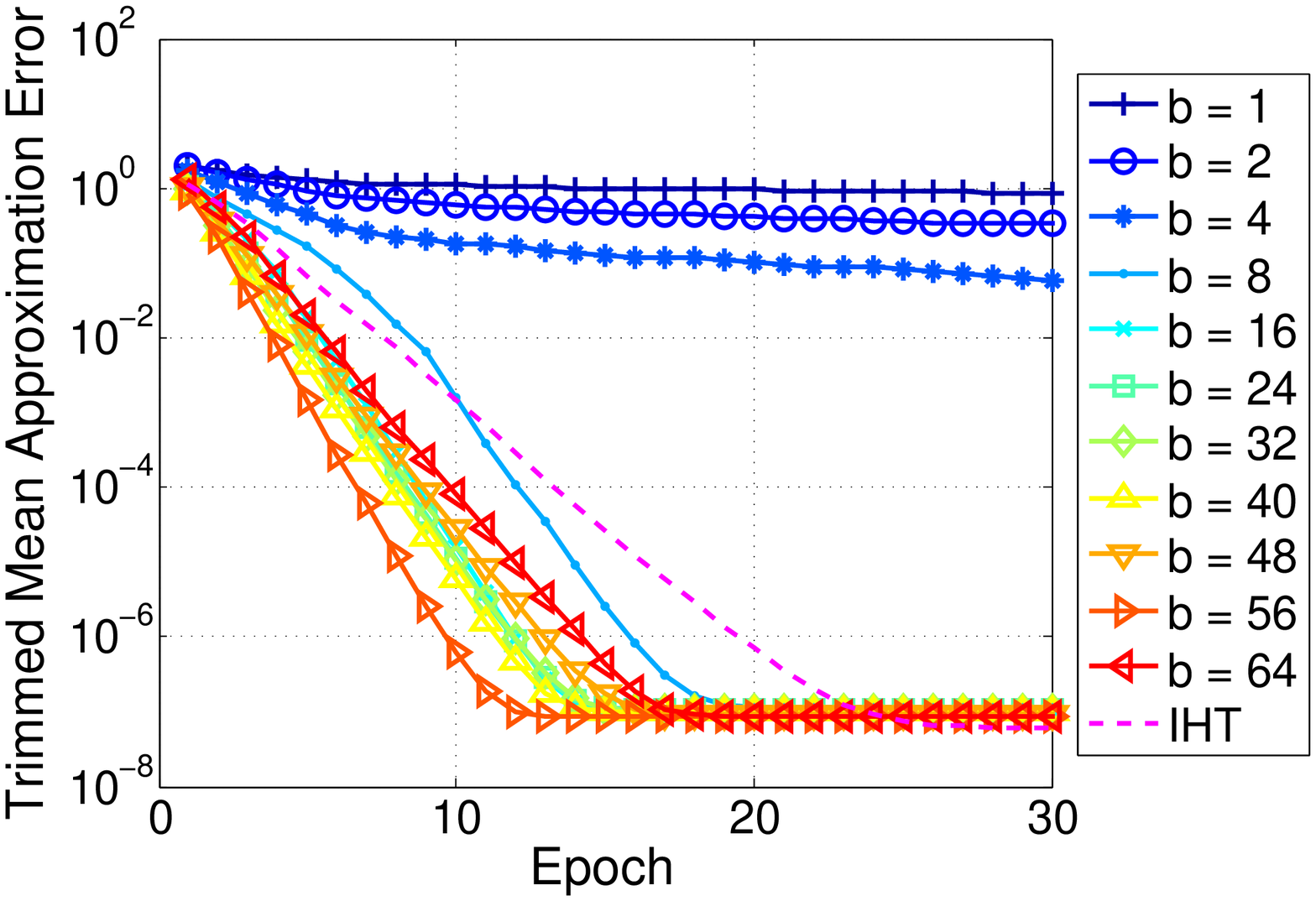} & 
\includegraphics[height=2.25in,width=3.05in]{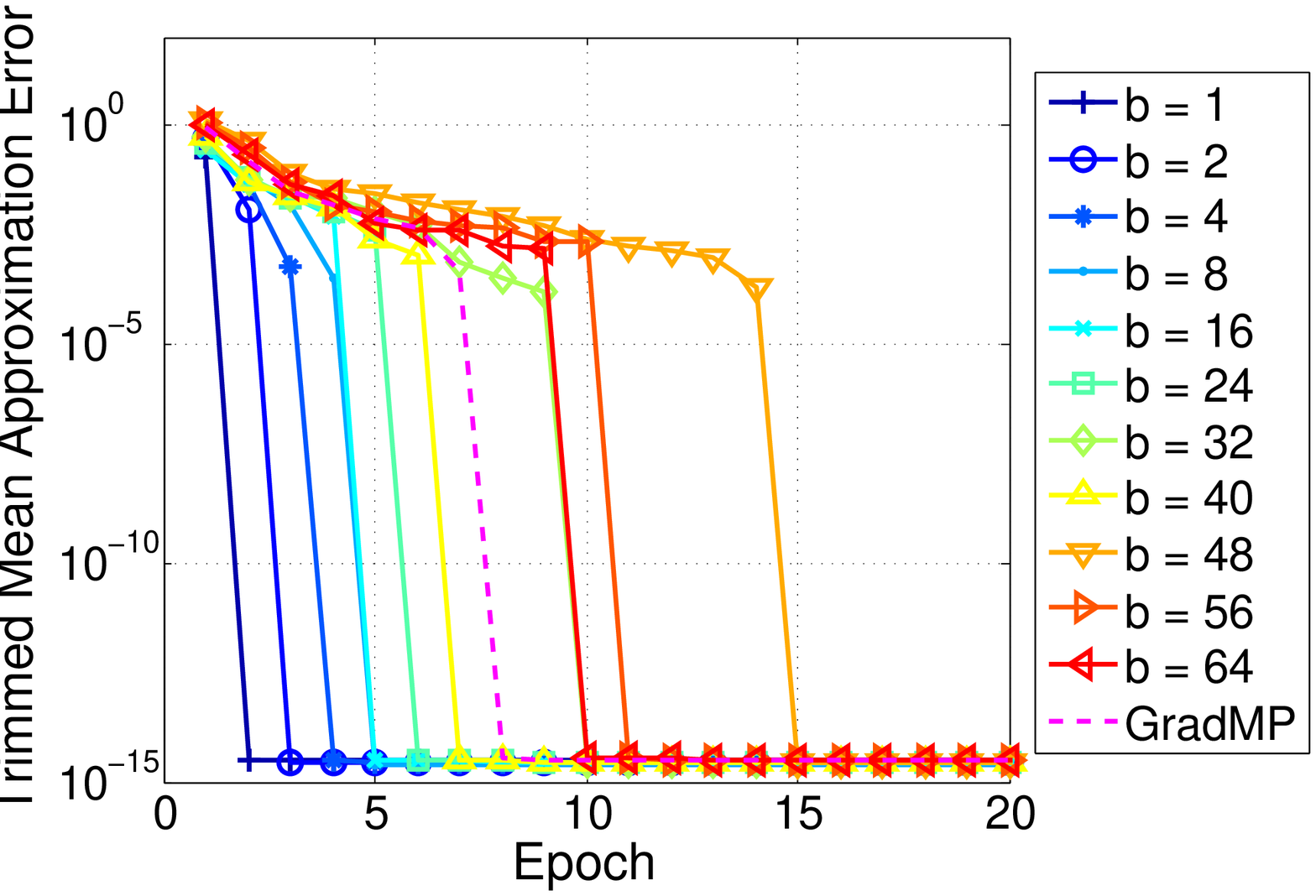}
\end{tabular}
\caption{Sparse Vector Recovery: Recovery error as a function of epochs and various block sizes $b$ for HT methods (left) and GradMP methods (right).\label{fig3}}
\end{figure} 

Next we repeat the same experiments but examine the recovery error as a function of the number of measurements for various block sizes (note that if the block size exceeds the number of measurements, we simply use the entire matrix as one block).  Figure~\ref{fig4} shows these results.  Because the methods exhibit graceful decrease in recovery error, here we plot the number of measurements (as a function of block size) required in order for the estimation error $\|w-\hat{w}\|_2$ to drop and remain below $10^{-6}$.  Although block size is not a parameter for the deterministic methods IHT and GradMP, a red horizontal line at the number of measurements required is included for comparison. We see that the fewest measurements are required when the block sizes are about $10$ (recall the signal dimension is $256$). We also note that StoIHT requires fewer measurements than IHT for large blocks, whereas StoGradMP requires the same as GradMP for large blocks, which is not surprising.  However, we see that both methods offer improvements over their deterministic counterparts if the block sizes are chosen correctly.

\begin{figure}[ht]
\begin{tabular}{cc}
\includegraphics[height=2.25in,width=3.05in]{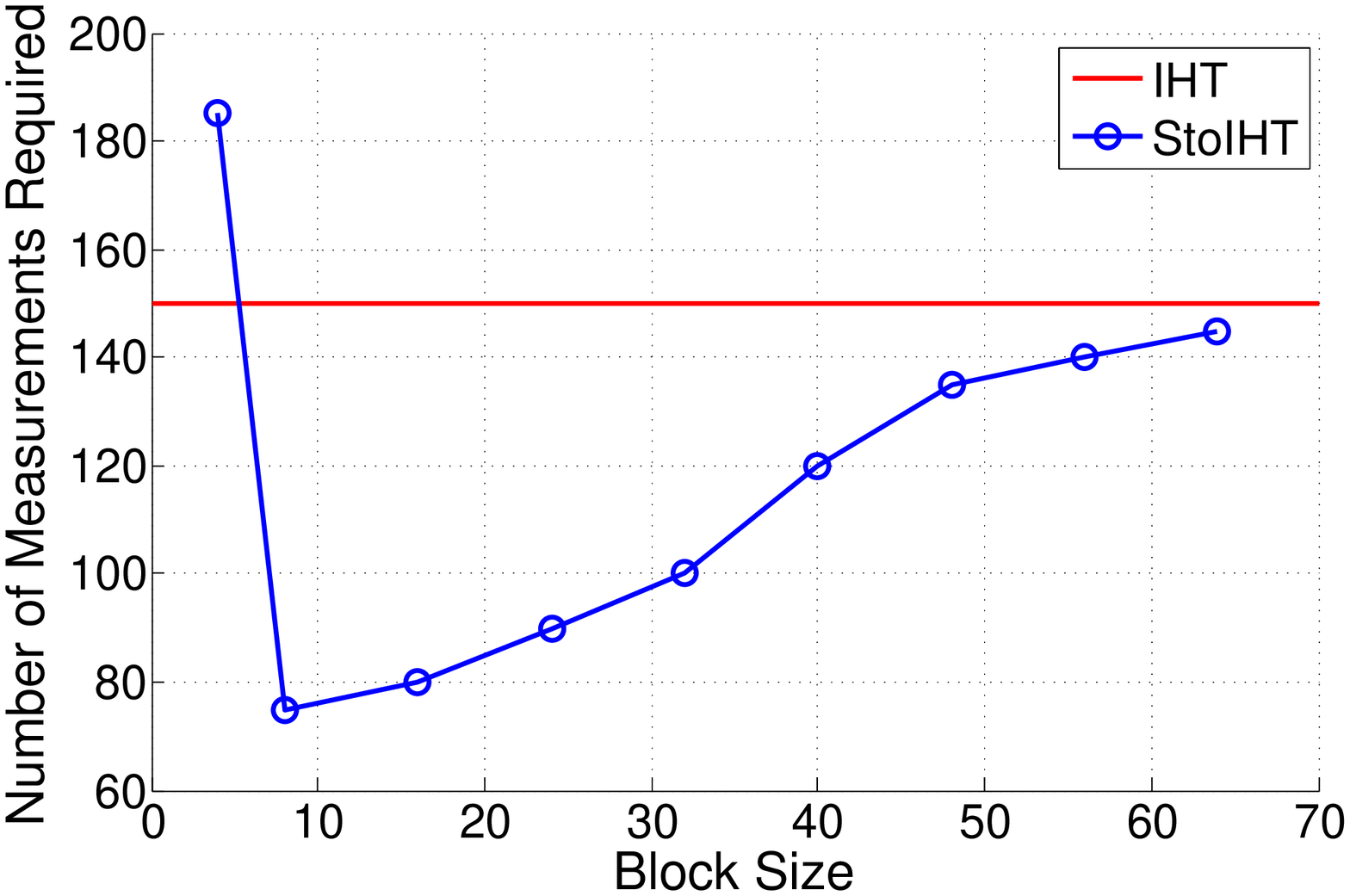} & 
\includegraphics[height=2.25in,width=3.05in]{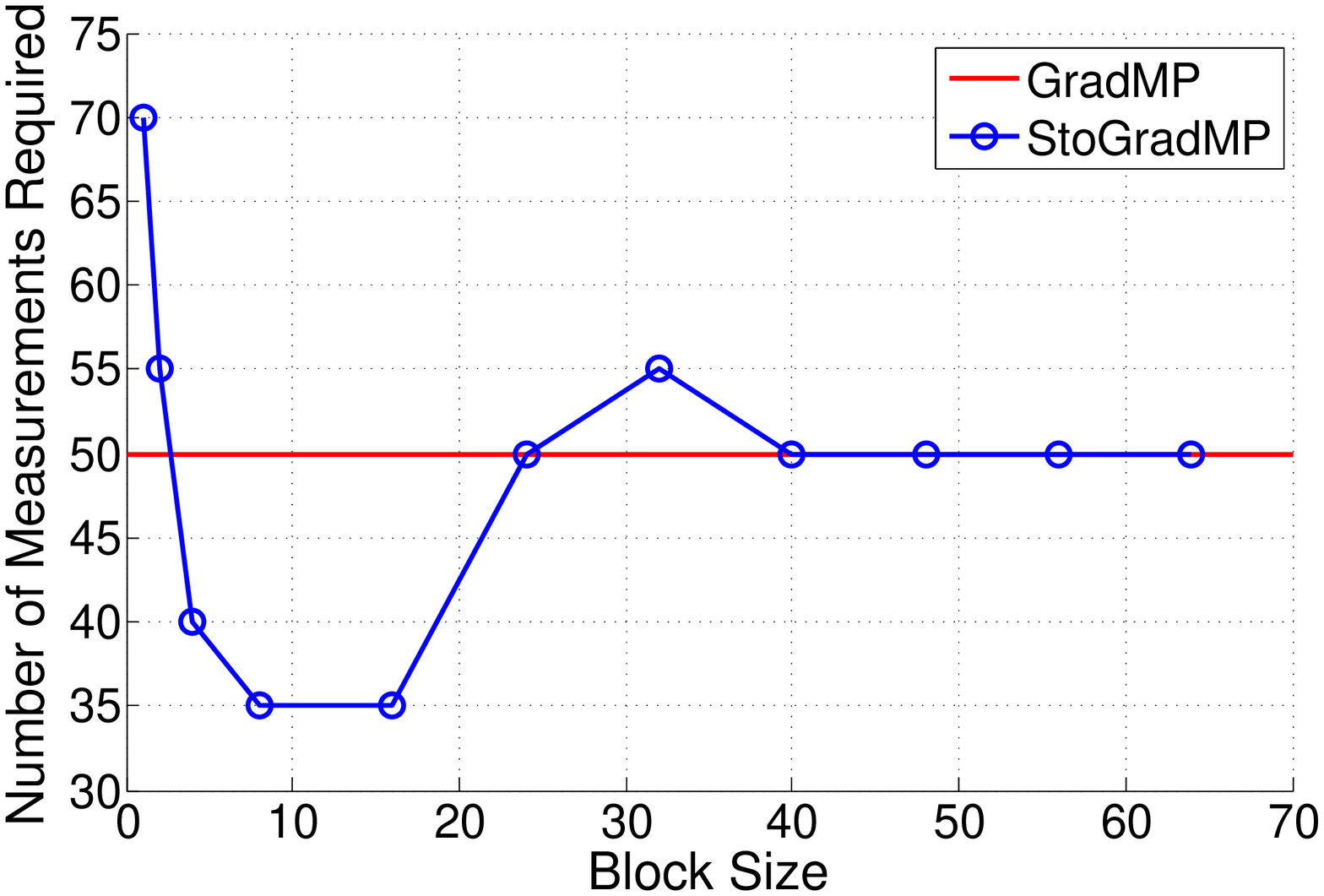}
\end{tabular}
\caption{Sparse Vector Recovery: Number of measurements required for signal recovery as a function of block size (blue marker) for StoIHT (left) and StoGradMP (right).  Number of measurements required for deterministic method shown as red solid line.\label{fig4}}
\end{figure} 

\subsection{Robustness to measurement noise}
We next repeat the above sparse vector recovery experiments in the presence of noise in the measurements.  All experiment parameters remain as in the previous setup, but a vector $e$ of Gaussian noise with $\|e\|_2 = 0.5$ is added to the measurement vector.  We again compare the recovery error against the number of epochs and measurements needed.  The results are shown in Figures~\ref{fig5} and \ref{fig6} for the IHT and GradMP algorithms, respectively.  The right hand plots show the number of measurements required for the error to drop below the noise level $0.5$ as a function of block size.  Overall, the methods are robust to noise and demonstrate the same improvements and heuristics as in the noiseless experiments.  

\begin{figure}[ht]
\begin{tabular}{cc}
\includegraphics[height=2.25in,width=3.22in]{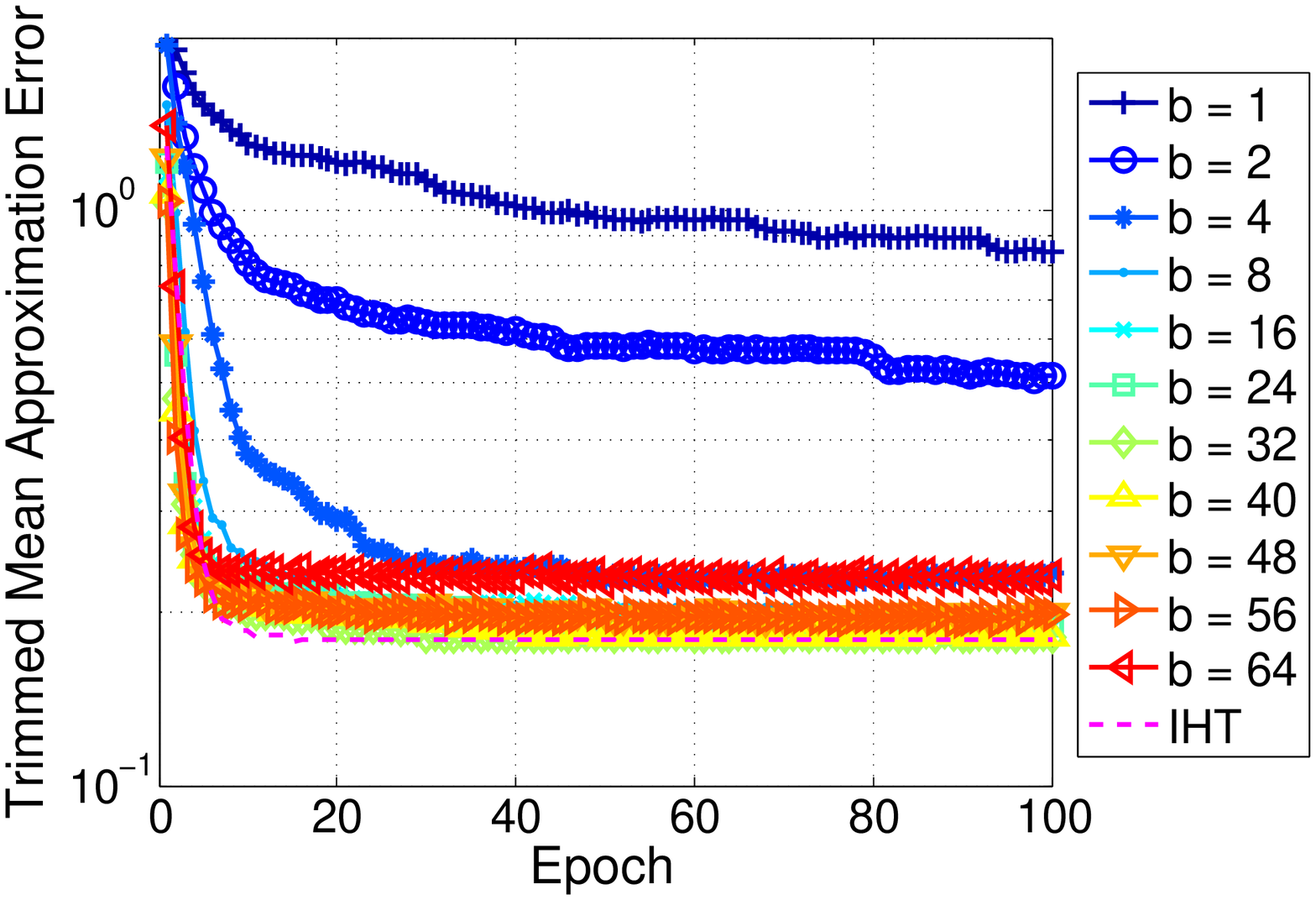} &
\includegraphics[height=2.25in,width=2.9in]{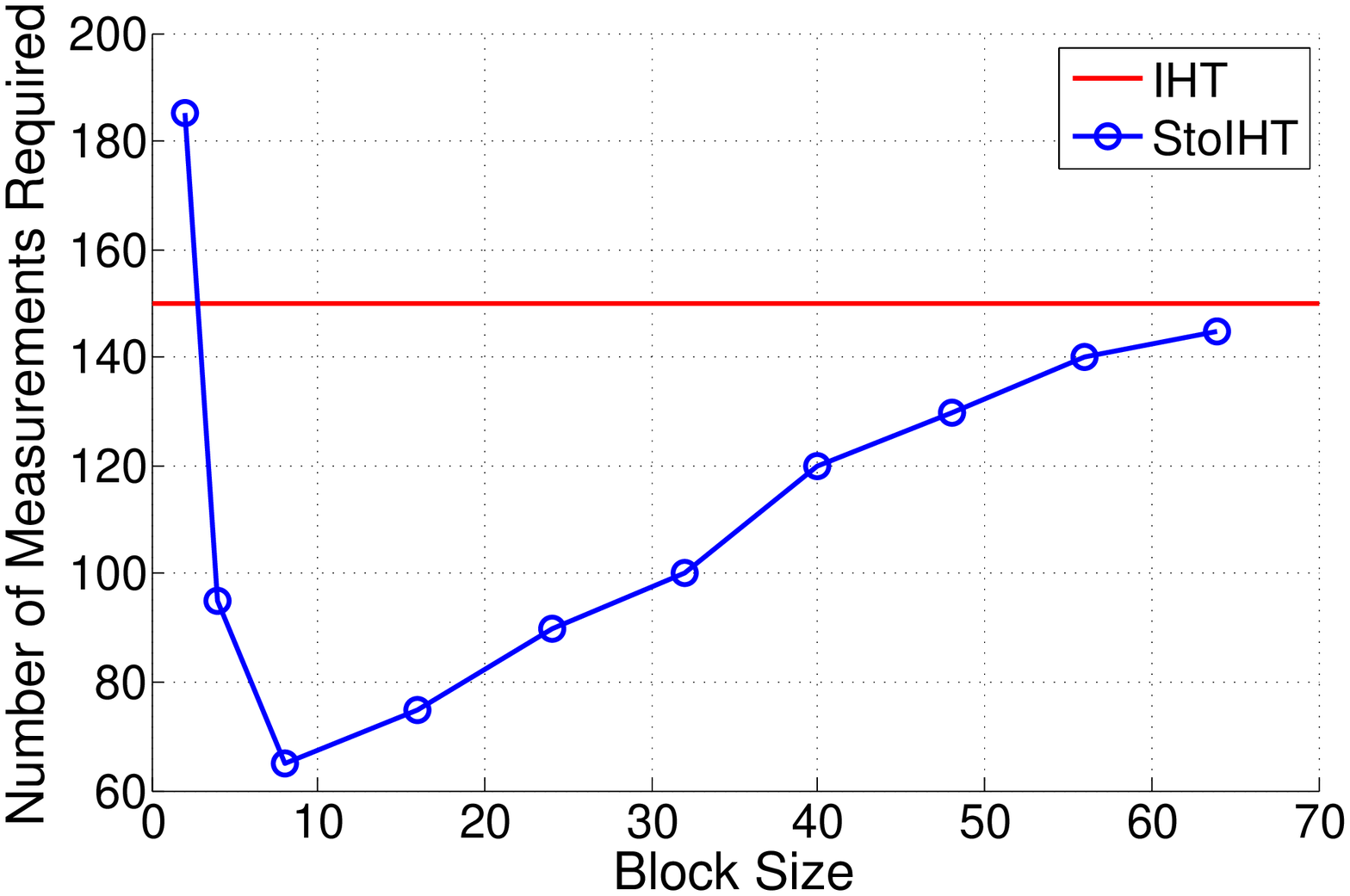} 
\end{tabular}
\caption{Sparse Vector Recovery: A comparison of IHT and StoIHT in the presence of noise.  Recovery error versus epoch (left) and measurements required versus block size (right).\label{fig5}}
\end{figure} 

\begin{figure}[ht]
\begin{tabular}{cc}
 \includegraphics[height=2.25in,width=3.22in]{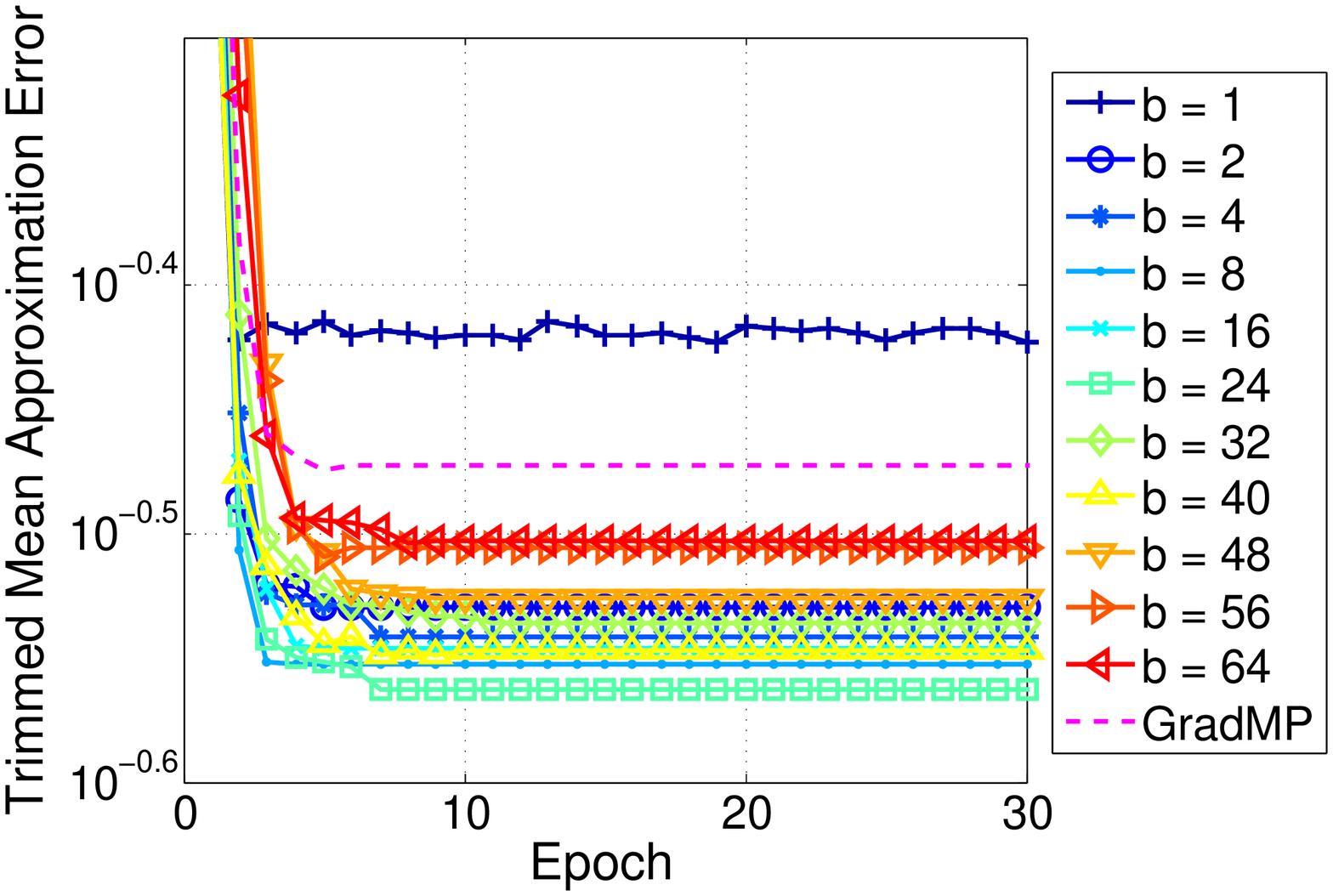} & 
 \includegraphics[height=2.25in,width=2.9in]{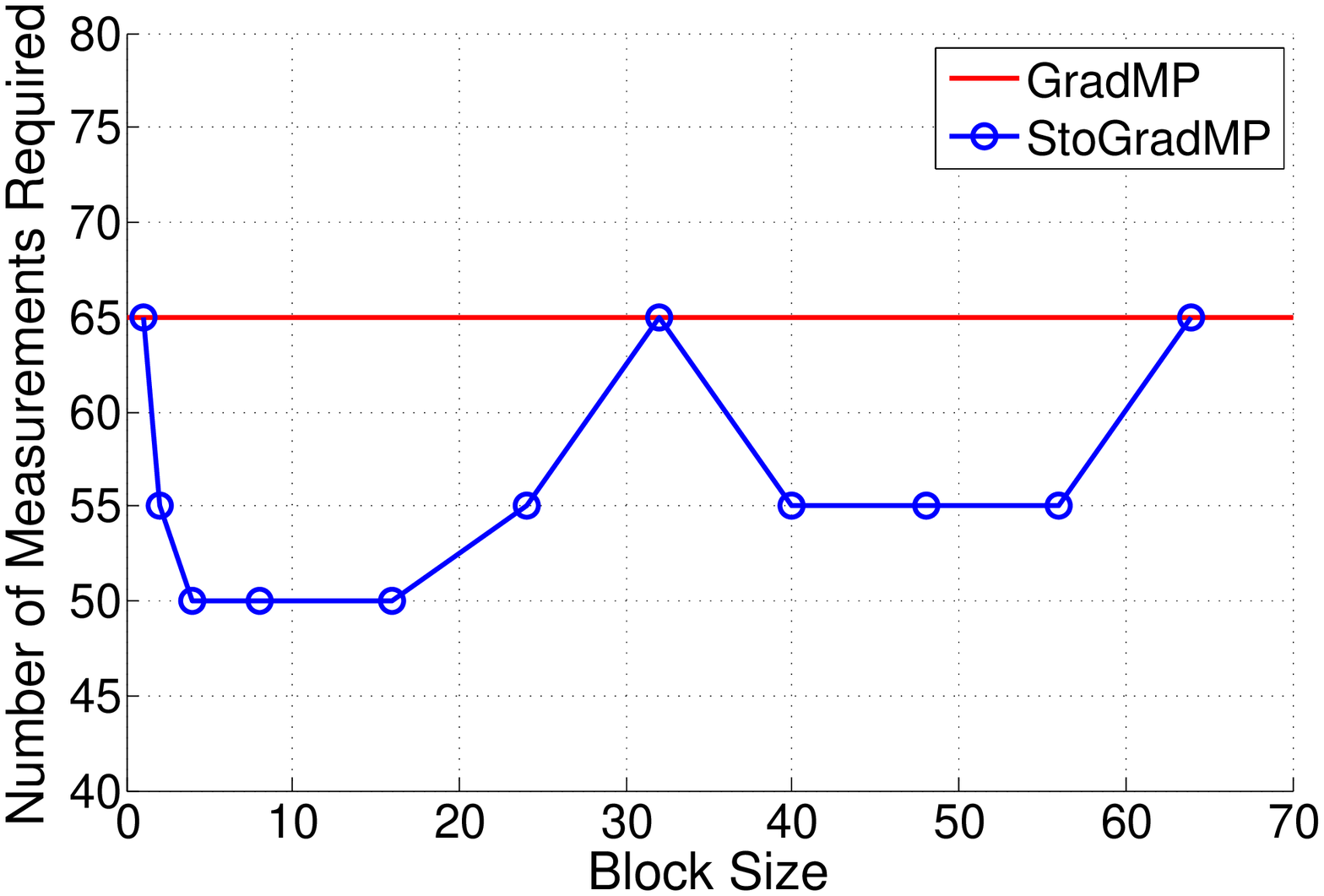} 
\end{tabular}
\caption{Sparse Vector Recovery: A comparison of GradMP and StoGradMP in the presence of noise. Recovery error versus epoch (left) and measurements required versus block size (right).\label{fig6}}
\end{figure} 

\subsection{The choice of step size in StoIHT}

Our last experiment in the sparse vector recovery setting explores the role of the step size $\gamma$ in StoIHT.  Keeping the dimension of the signal at $256$, the sparsity $k_0=8$, the number of measurements $m=80$, no noise, and fixing the block size $b=8$, we test the algorithm using various values of the step size $\gamma$.  The results are shown in Figure~\ref{fig7}.  We see that the value of $\gamma$ clearly plays a role, but the range of successful values is quite large.  Not surprisingly, too small of a step size leads to extremely slow convergence, and too large of one leads to divergence (at least initially).

\begin{figure}[ht]
\begin{center}
 \includegraphics[height=3in,]{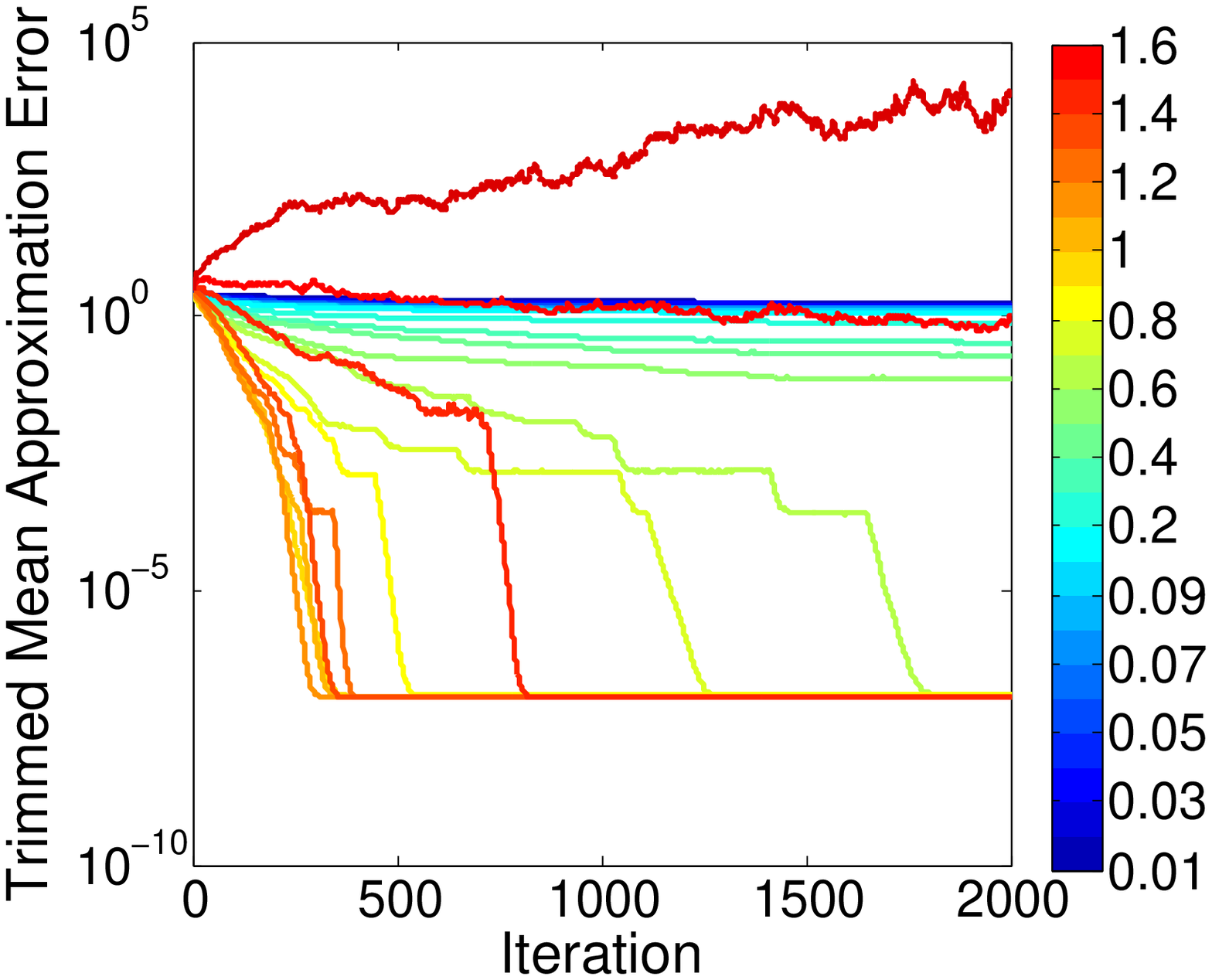} 
\end{center}
\caption{Sparse Vector Recovery: A comparison of StoIHT for various values of the step size $\gamma$ (shown in the colorbar).\label{fig7}}
\end{figure} 

\subsection{Low-Rank Matrix Recovery}

We now turn to the setting where we wish to recover a low-rank matrix $W_0$ from $m$ linear measurements as studied in Subsection \ref{subsec::Low-rank matrix recovery}. Here $W_0$ is the $10\times 10$ matrix with rank $k_0$ and we take $m$ linear Gaussian measurements of the form $y_i = \langle A_i, W_0 \rangle$, where each $A_i$ is a $10\times 10$ matrix with i.i.d. standard Gaussian entries.  As before, we first compare the percentage of exact recovery (where again we deem the signal is recovered exactly when the error $\|W_0-\hat{W}\|_F$ is below $10^{-6}$) against the number of measurements required, for various rank levels. For the matrix case, we use a step size of $\gamma=0.5$ for both the IHT and StoIHT methods, which seems to work well in this setting. The results for IHT and StoIHT are shown in Figure~\ref{fig8} and for GradMP and StoGradMP in Figure~\ref{fig9}. For this choice of parameters, we see that both StoIHT and StoGradMP tend to require fewer measurements to recover the signal.

\begin{figure}[ht]
\begin{tabular}{cc}
 \includegraphics[height=2.25in,width=2.9in]{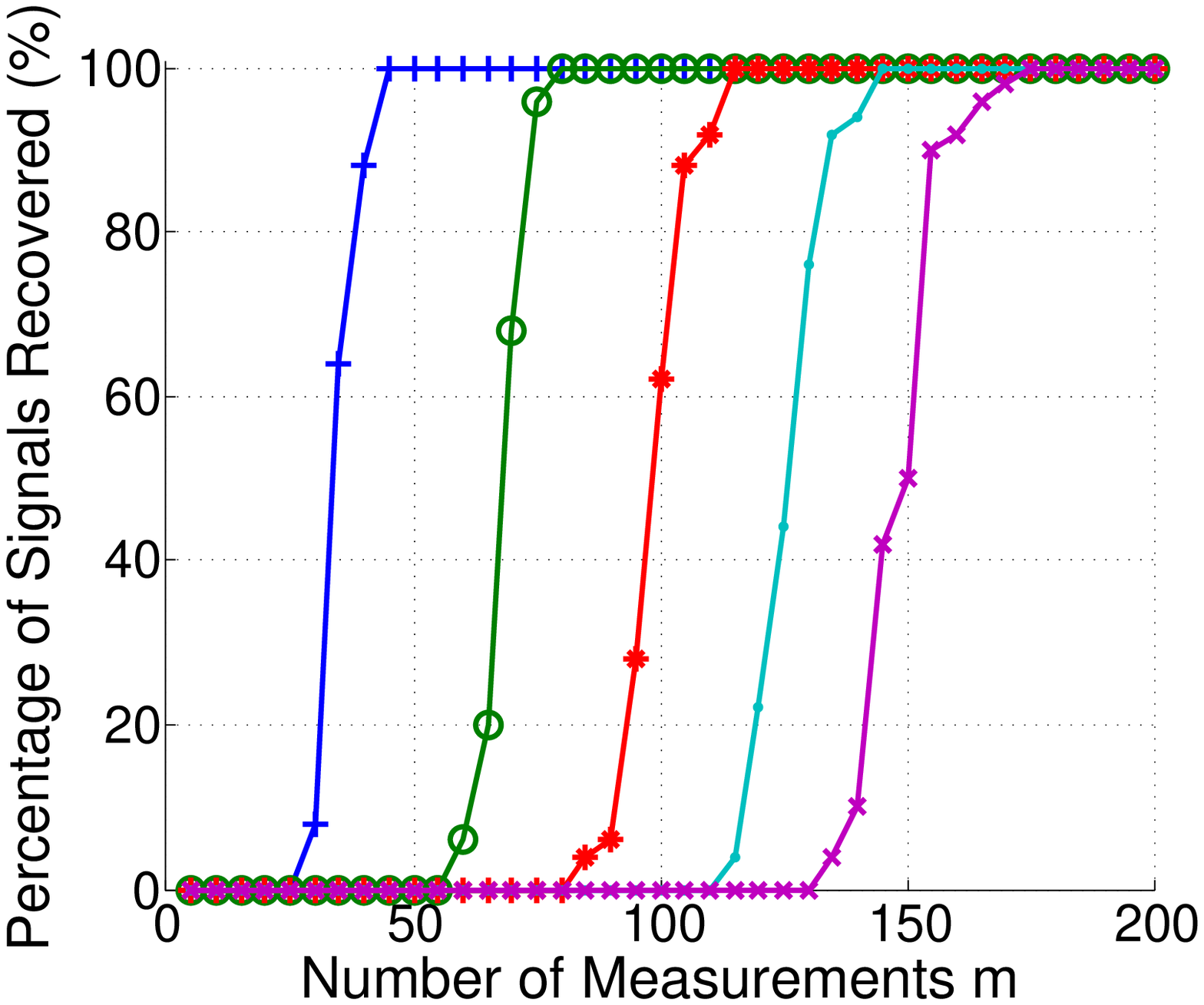} & 
 \includegraphics[height=2.25in,width=3.22in]{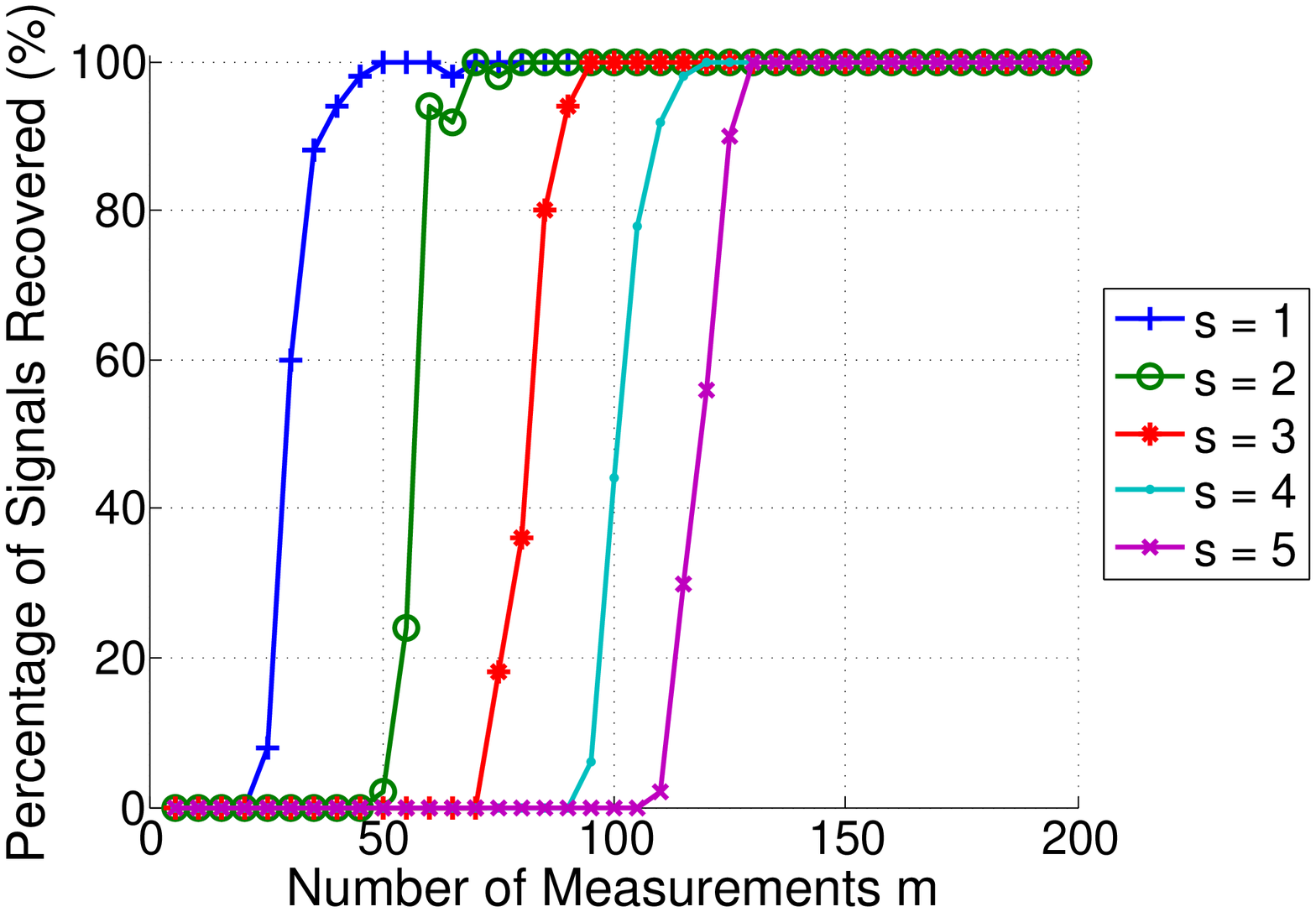} 
\end{tabular}
\caption{Low-Rank Matrix Recovery: Percent recovery as a function of the number of measurements for IHT (left) and StoIHT (right) for various rank levels $s$.\label{fig8}}
\end{figure} 

\begin{figure}[ht]
\begin{tabular}{cc}
 \includegraphics[height=2.25in,width=2.9in]{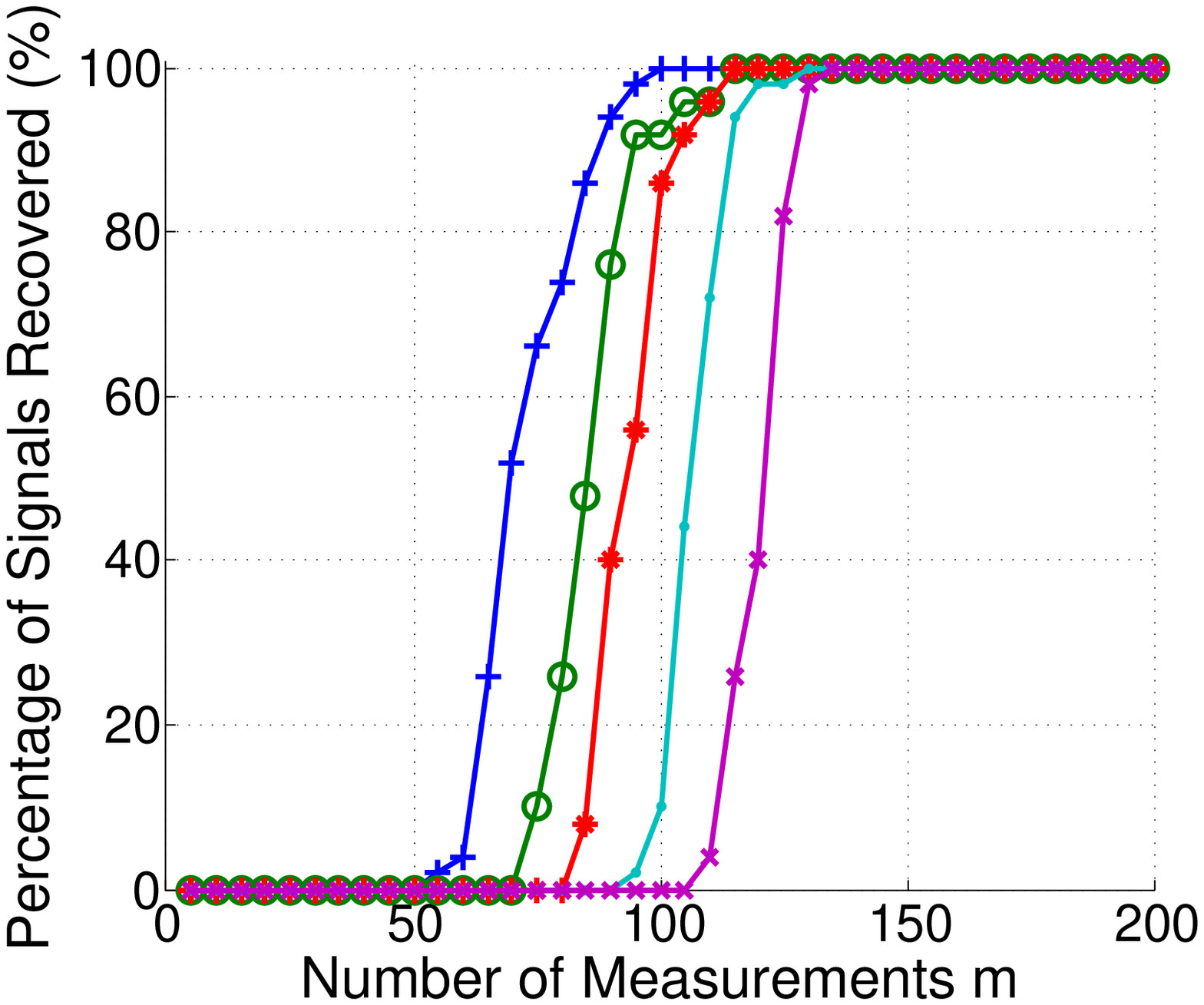} & 
 \includegraphics[height=2.25in,width=3.22in]{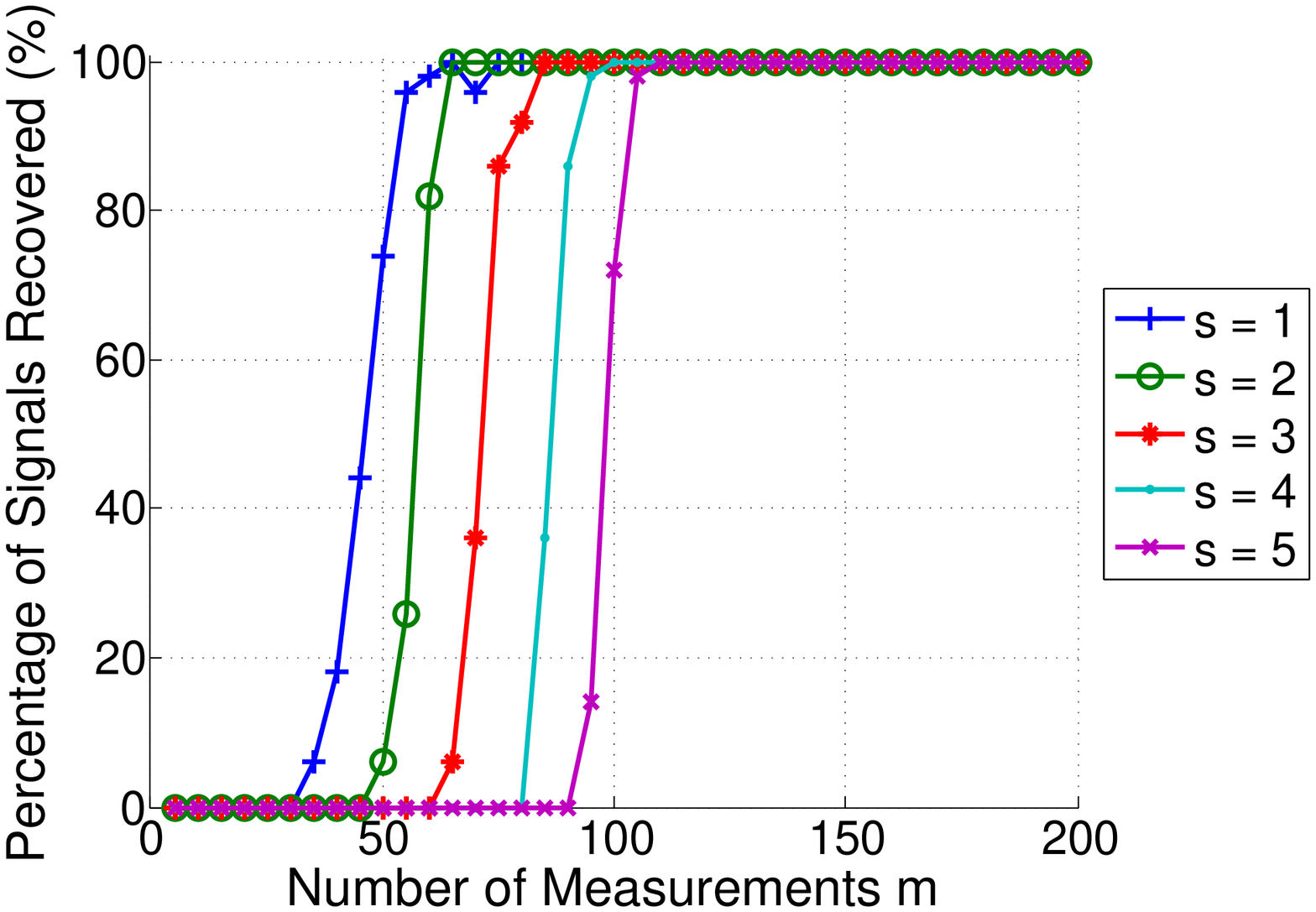} 
\end{tabular}
\caption{Low-Rank Matrix Recovery: Percent recovery as a function of the number of measurements for GradMP (left) and StoGradMP (right) for various rank levels $k_0$.\label{fig9}}
\end{figure} 

Next we examine the signal recovery error as a function of epoch, for various block sizes and against the deterministic methods.  We fix the rank to be $k_0=2$ in these experiments.  Because both block size and number of measurements affect the convergence, we see different behavior in the low measurement regime and the high measurement regime.  This is apparent in Figure~\ref{fig10}, where $m=90$ measurements are used in the plot on the left and $m=140$ measurements are used in the plot on the right, which shows the convergence of the IHT methods per epoch for various block sizes.  We again see that for proper choices of block sizes, the StoIHT method outperforms IHT.  It is also interesting to note that IHT seems to reach a higher noise floor than StoIHT.  Of course we again point out that we have not optimized any of the algorithm parameters for either method.  Results for the GradMP methods are shown in Figure~\ref{fig11}, again where $m=90$ measurements are used in the plot on the left and $m=140$ measurements are used in the plot on the right. Similar to the IHT results, proper choices of block sizes allows StoGradMP to require much fewer epochs than GradMP to achieve convergence.

\begin{figure}[ht]
\begin{tabular}{cc}
 \includegraphics[height=2.25in,width=2.9in]{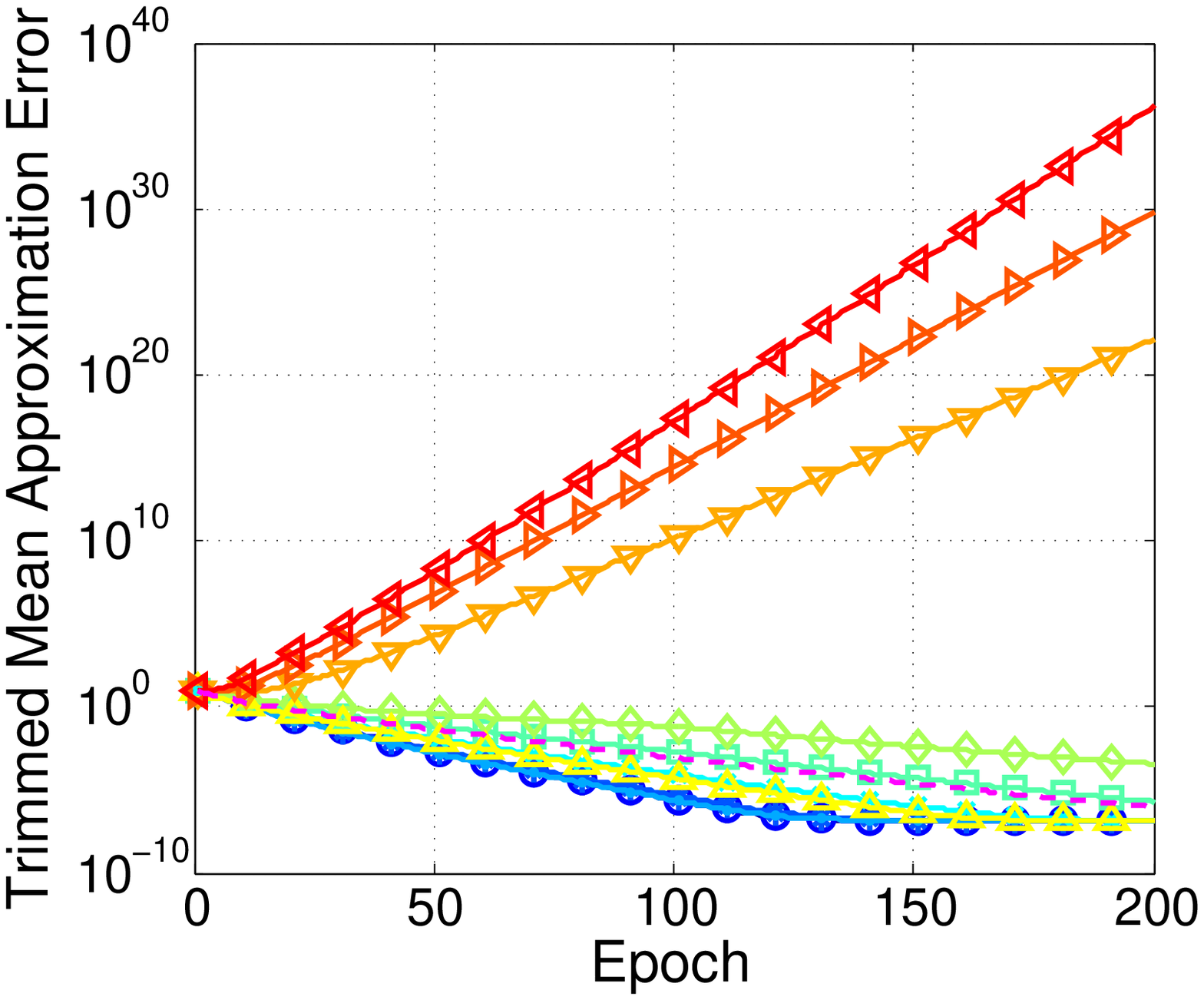} & 
 \includegraphics[height=2.25in,width=3.22in]{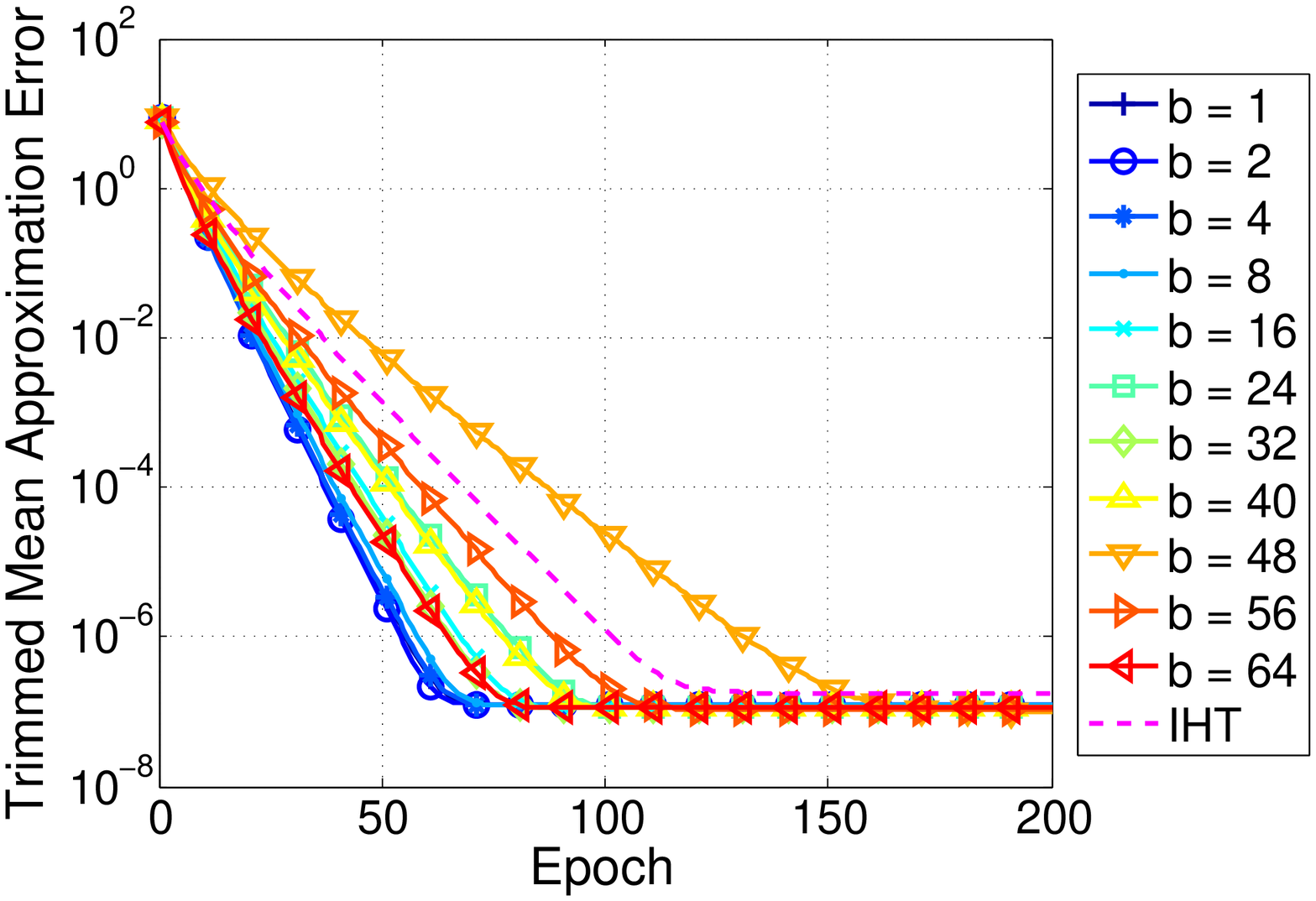} 
\end{tabular}
\caption{Low-Rank Matrix Recovery: Recovery error as a function of the number of epochs for StoIHT methods using $m=90$ (left) and $m=140$ (right) for various block sizes $b$.\label{fig10}}
\end{figure} 

\begin{figure}[ht]
\begin{tabular}{cc}
 \includegraphics[height=2.25in,width=2.9in]{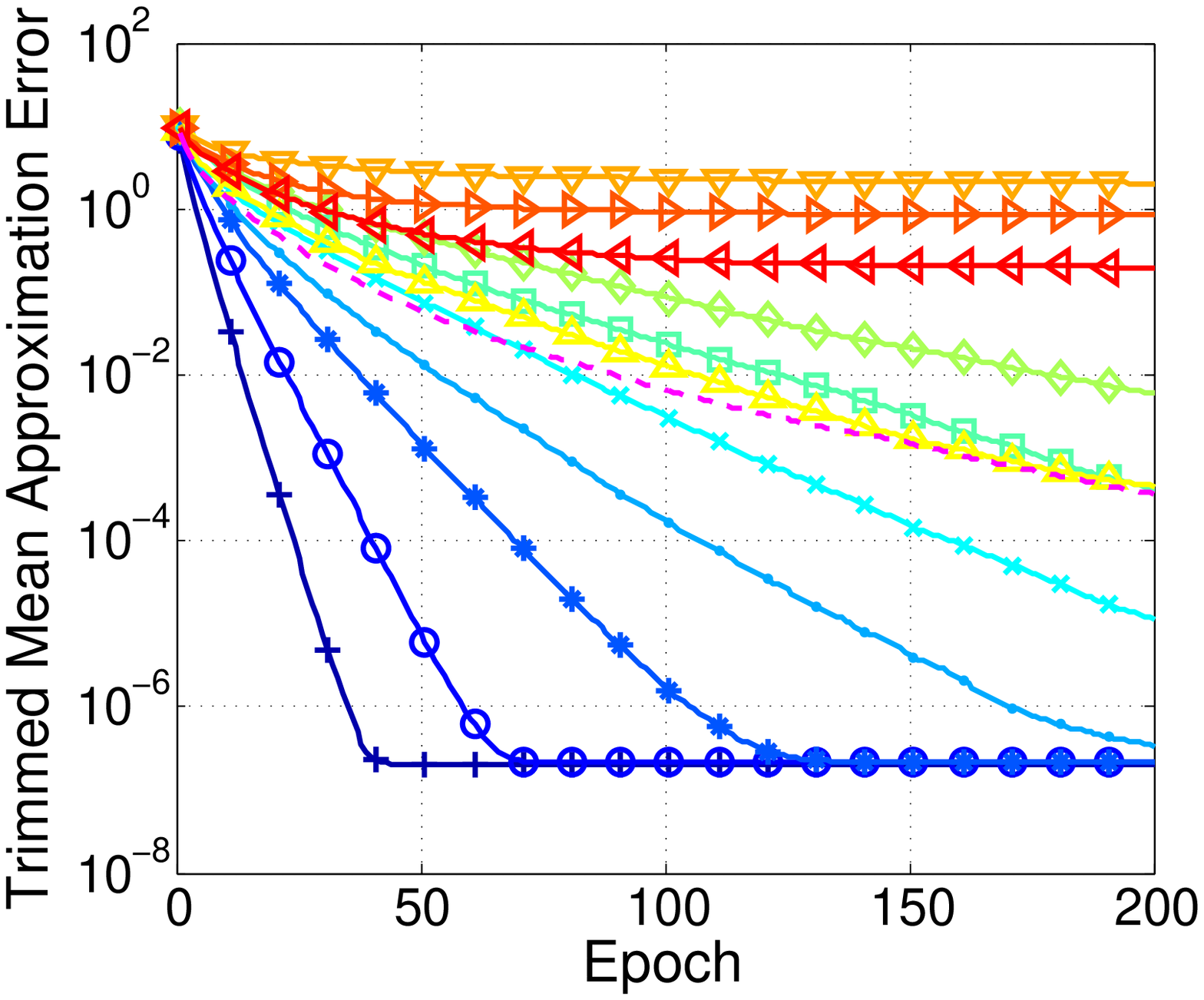} & 
 \includegraphics[height=2.25in,width=3.22in]{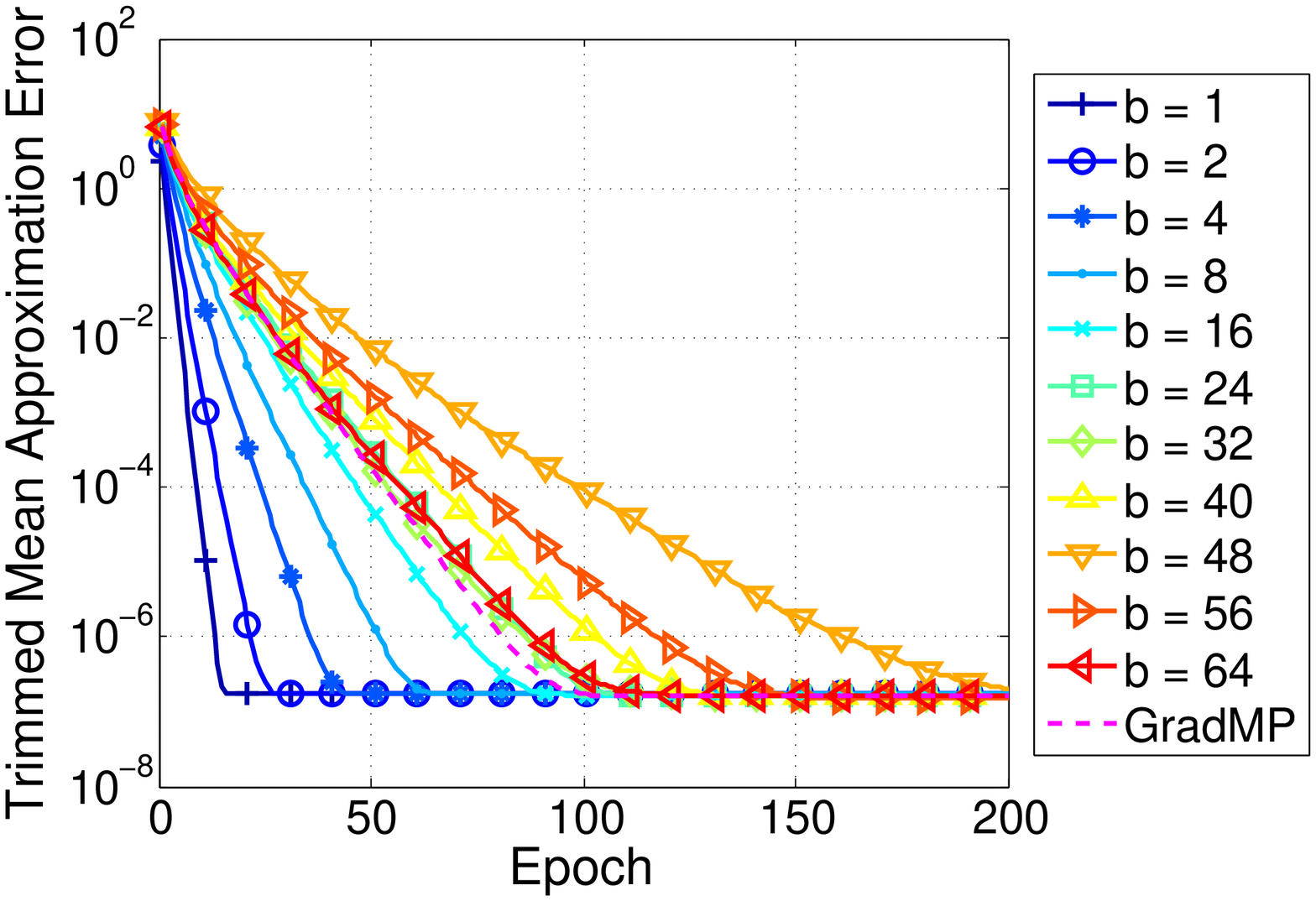} 
\end{tabular}
\caption{Low-Rank Matrix Recovery: Recovery error as a function of the number of epochs for the StoGradMP algorithm using $m=90$ (left) and $m=140$ (right) for various block sizes $b$.\label{fig11}}
\end{figure} 

Figure~\ref{fig12} compares the block size and the number of measurements required for exact signal recovery for the IHT methods and the GradMP methods, again for a fixed rank of $k_0=2$.  We again see that StoIHT and StoGradMP prefer small block sizes.

\begin{figure}[ht]
\begin{tabular}{cc}
 \includegraphics[height=2.25in,width=3.05in]{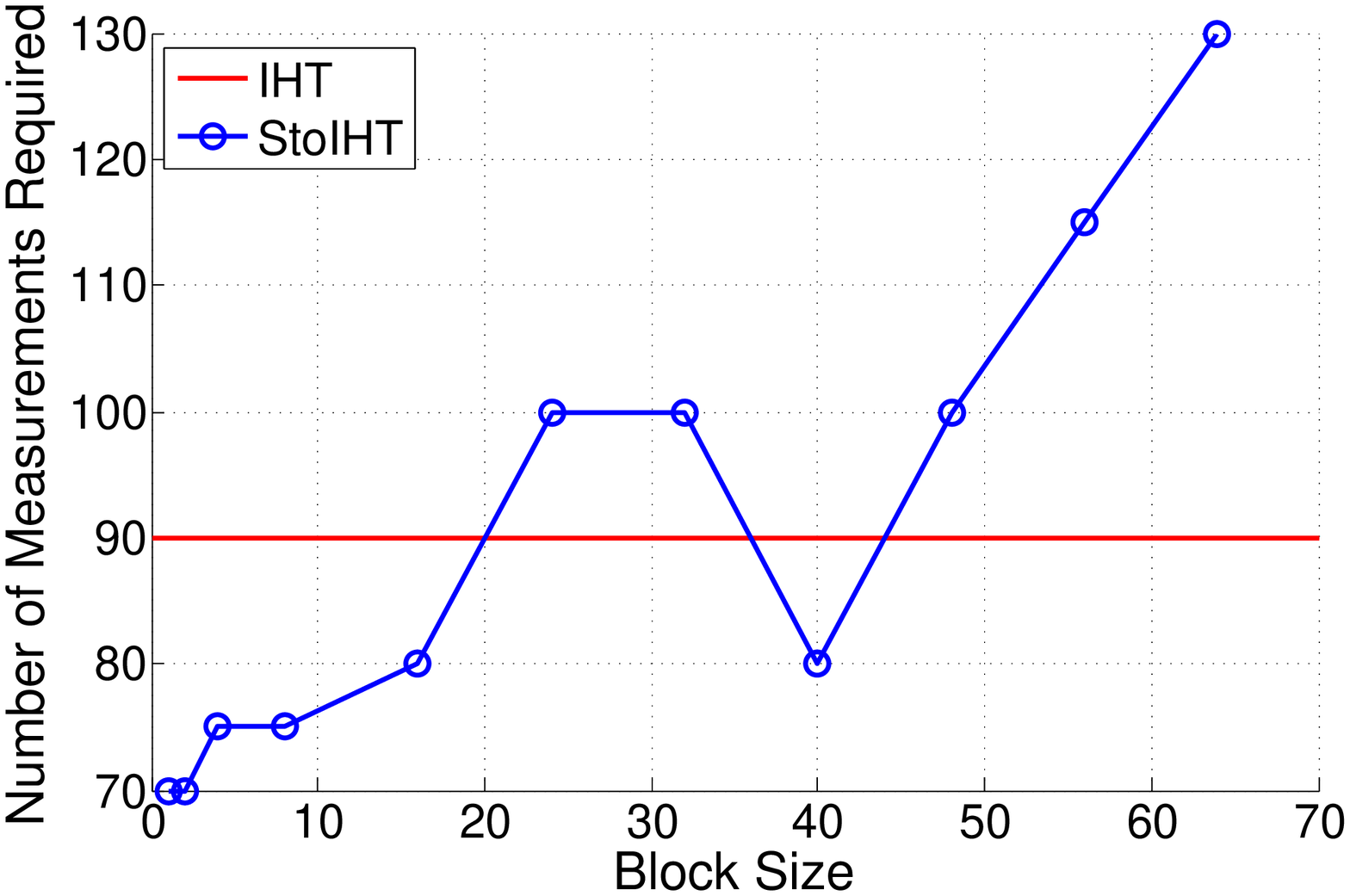} & 
 \includegraphics[height=2.25in,width=3.05in]{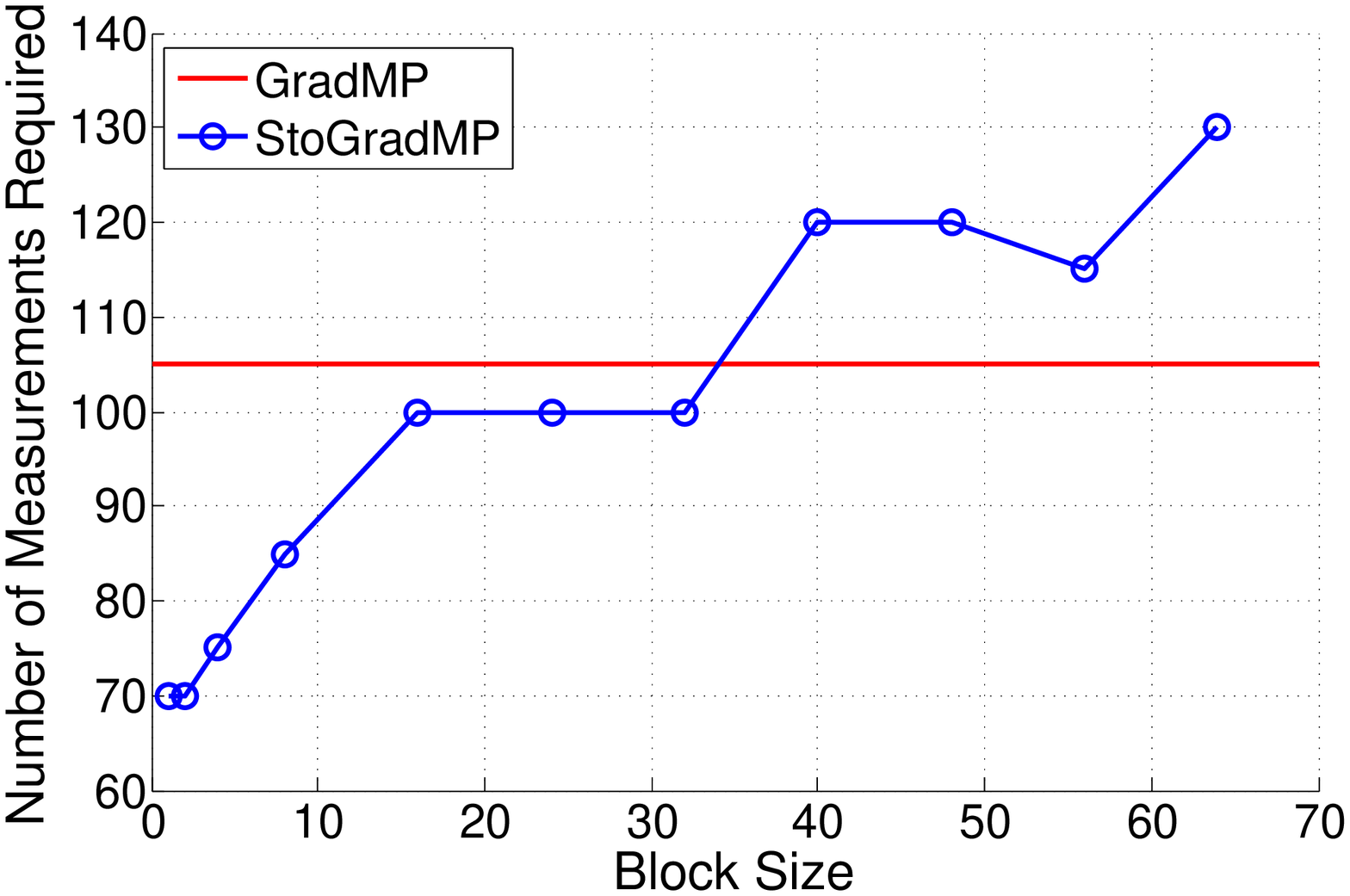} 
\end{tabular}
\caption{Low-Rank Matrix Recovery: Number of measurements required for signal recovery as a function of block size (blue marker) for StoIHT (left) and StoGradMP (right).  Number of measurements required for deterministic method shown as red solid line.  \label{fig12}}
\end{figure} 

\subsection{Recovery with approximations}
Finally, we consider the important case where the identification, estimation, and pruning steps can only be performed approximately.  In particular, we consider the case of low-rank matrix recovery in which these steps utilize only an approximate Singular Value Decomposition (SVD) of the matrix.  This may be something that is unavoidable in certain applications, or may be desirable in others for computational speedup.  For our first experiments of this kind, we use $N=1024$ and generate a $N\times N$ rank $k_0=40$ matrix.  We take $m$ permuted rows of the $N\times N$ discrete Fourier transform as the measurement operator, use $2$ blocks in the stochastic algorithms, and run $40$ trials. In the StoIHT experiments we take $m=0.3N^2$, and in the StoGradMP experiments we take $m=0.35N^2$; these values for $m$ empirically seemed to work well with the two algorithms. For each trial of the approximate SVD, we also run $5$ sub-trials, to account for the randomness used in the approximate SVD algorithm.  Here we use the randomized method described in~\cite{halko2011finding} to compute the approximate SVD of a matrix.  Briefly, to obtain a rank-$s$ approximation of a matrix X and compute its approximated SVD, one applies the matrix to a randomly generated $N\times (s+d)$ matrix $\Omega$ to obtain the product $Y=X\Omega$ and constructs an orthonormal basis $Q$ for the column space of $Y$.  Here, $d$ is an \textit{over-sampling factor} that can be tuned to balance the tradeoff between accuracy and computation time.  Using this basis, one computes the SVD of the product $B=Q^*X = U\Sigma V^*$, and approximates the SVD of $X$ by $X\approx (QU)\Sigma V^*$.  Because $(s+d)$ is typically much less than $N$, significant speedup can be gained. In addition, \cite{halko2011finding} proves that the approximation error is bounded by
$$
\norm{X - X_s}_F \leq \left(1 + \sqrt{\frac{s}{s+d}} \right) \norm{X - X^{\text{best}}_s}_F,
$$
where $X^{\text{best}}_s$ is the best rank-$s$ approximation of $X$ and $X_s$ is the approximate rank-$s$ matrix produced from the above procedure. Here, the multiplicative error is associated with the quantity $\eta$ in the approximation operator $\text{approx}_s(w,\eta)$ defined in (\ref{eqt::approximation definition}).

Figure~\ref{fig13} shows the approximation error as a function of epoch and runtime for the StoIHT algorithm, for various over-sampling factors $d$ as well as the full SVD computation for comparison.  We again use a $10\%$ trimmed mean over all the trials, and a step-size of $\gamma=0.5$.  We see that in terms of epochs, for reasonably sized over-sampling factors, the convergence using the SVD approximation is very similar to that of using the full SVD.  In terms of runtime, we see a significant speedup for moderate choices of over-sampling factor, as expected. Recall that $2$ blocks were used for this experiment, but we have observed a very similar relationship between the curves when increasing the number of blocks to $10$. 

The analogous results for StoGradMP are very similar, and are shown in Figure ~\ref{fig14}. We again see that for certain over-sampling factors, the convergence of the approximation error as a function of epoch is similar when using the approximate SVD and the full SVD. We also see a very significant speedup when using the approximate SVD; in this case, all the over-sampling factors used in this experiment offer an improved runtime over the full SVD computation. 

\begin{figure}[ht]
\begin{tabular}{cc}
 \includegraphics[height=2.25in,width=2.9in]{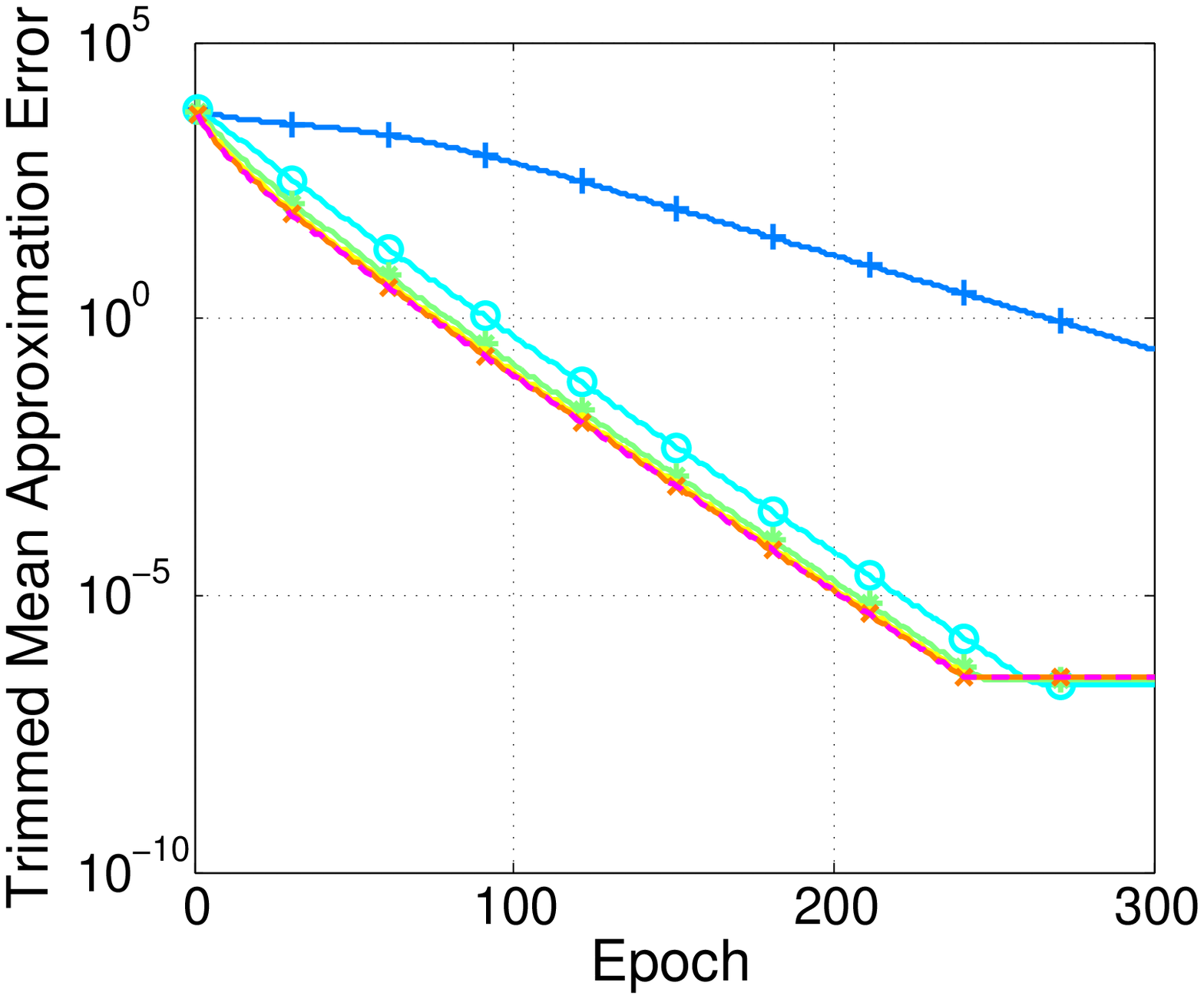} & 
 \includegraphics[height=2.25in,width=3.22in]{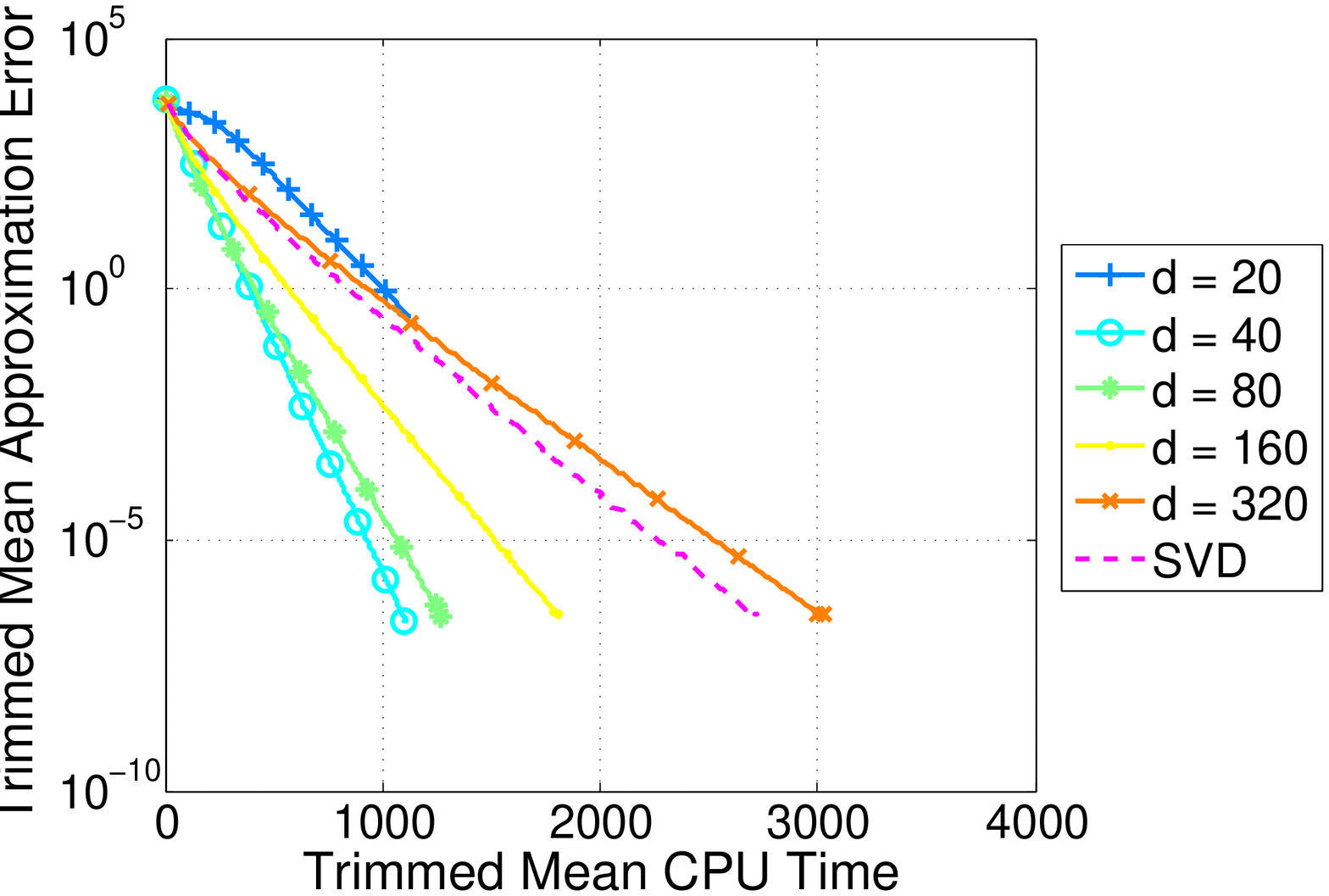} 
\end{tabular}
\caption{Low-Rank Matrix Recovery with Approximations: Trimmed mean recovery error as a function of epochs (left) and runtime (right) for various over-sampling factors $d$ using the StoIHT algorithm.  Performance using full SVD computation shown as dashed line.  \label{fig13}}
\end{figure} 

\begin{figure}[ht]
\begin{tabular}{cc}
 \includegraphics[height=2.25in,width=2.9in]{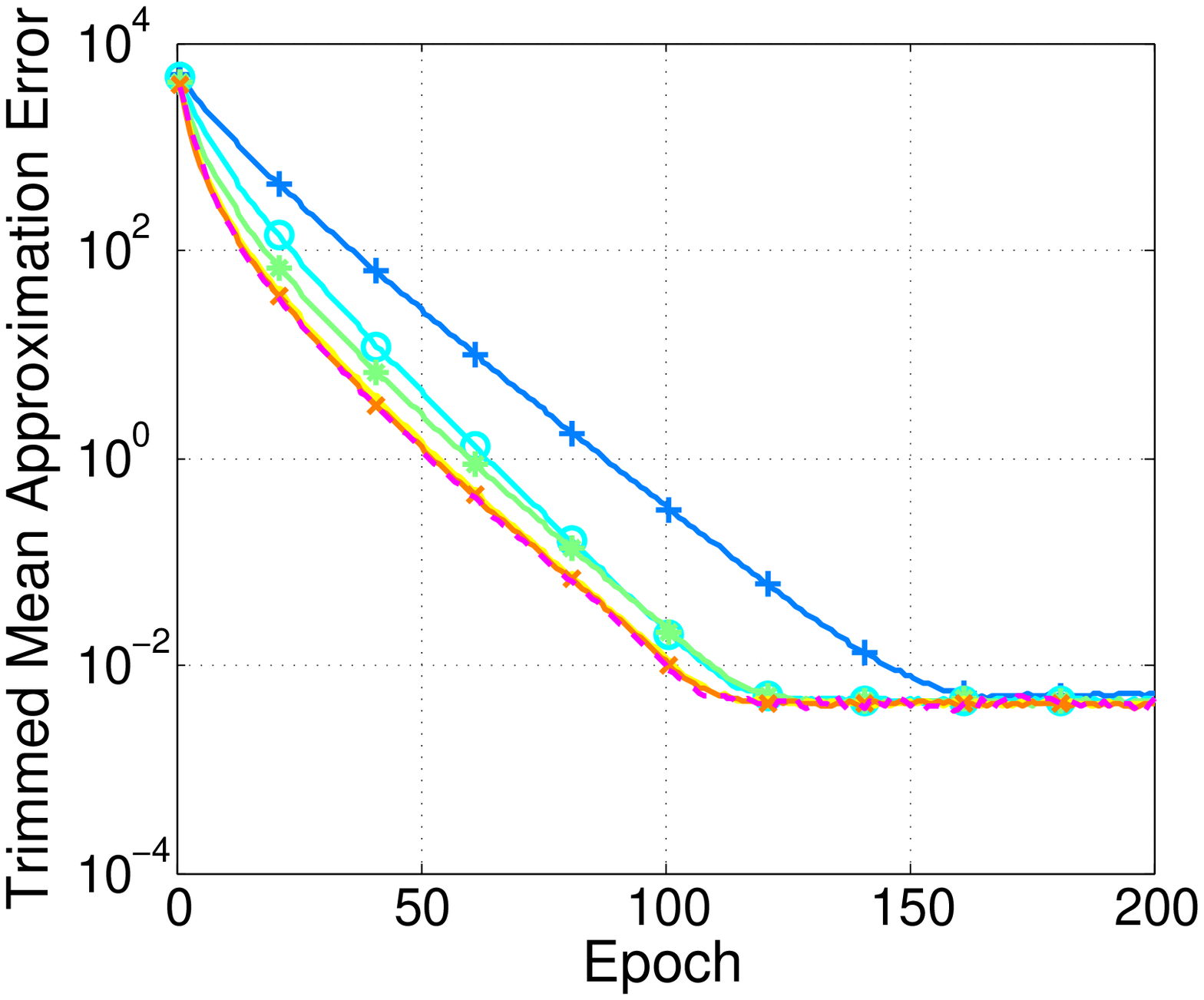} & 
 \includegraphics[height=2.25in,width=3.22in]{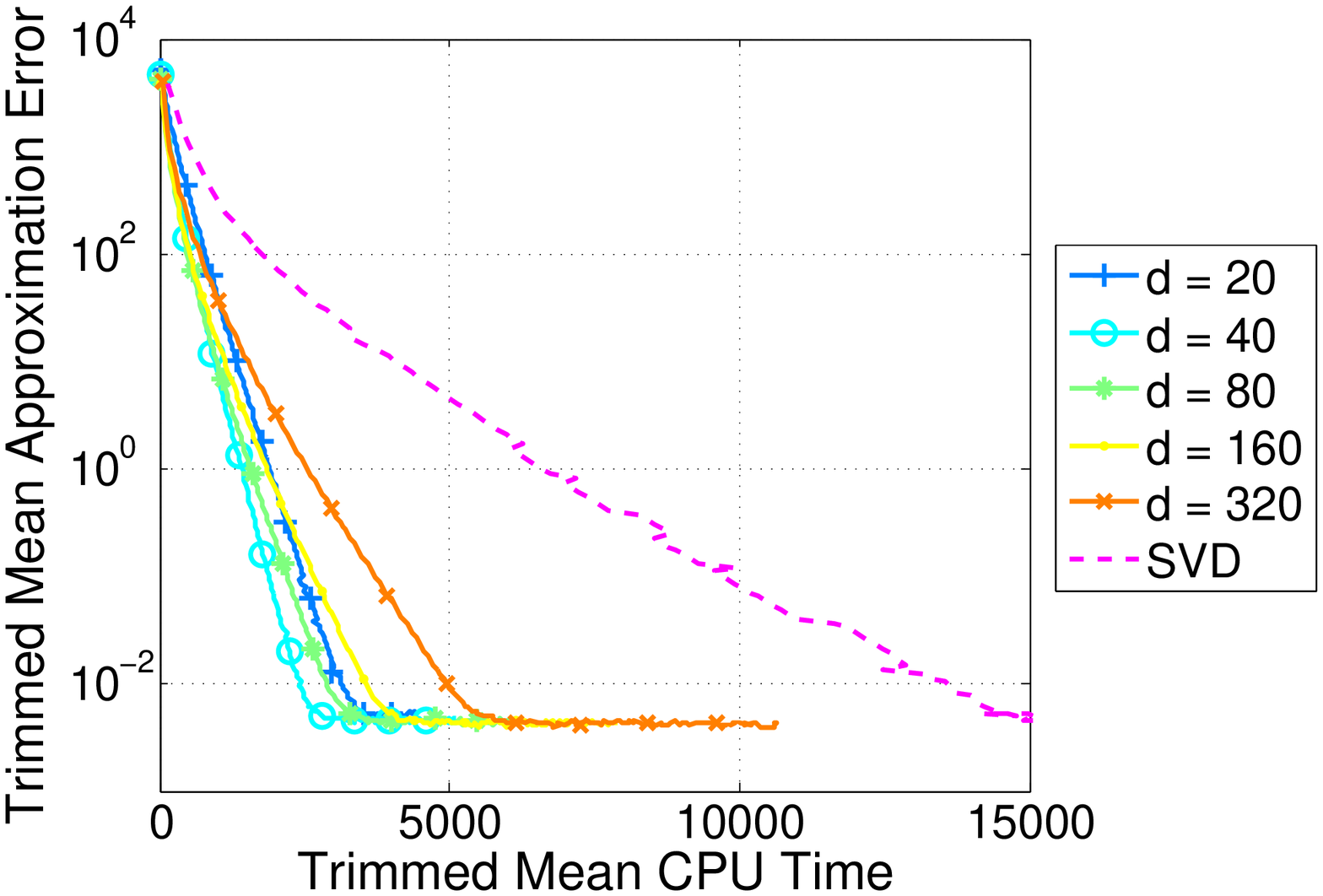} 
\end{tabular}
\caption{Low-Rank Matrix Recovery with Approximations: Trimmed mean recovery error as a function of epochs (left) and runtime (right) for various over-sampling factors $d$ using the StoGradMP algorithm.  Performance using full SVD computation shown as dashed line.  \label{fig14}}
\end{figure}

\section{Conclusion}
\label{sec::conclusion}

We study in this paper two stochastic algorithms to solve a possibly non-convex optimization problem with the constraint that the solution has a simple representation with respect to a predefined atom set. This type of optimization has found tremendous applications in signal processing, machine learning, and statistics such as sparse signal recovery and low-rank matrix estimation. Our proposed algorithms, called StoIHT and StoGradMP, have their roots back to the celebrated IHT and CoSaMP algorithms, from which we have made several significant extensions. The first extension is to transfer algorithmic ideas of IHT and CoSaMP to the stochastic setting and the second extension is the allowance of approximate projections at each iteration of the algorithms. More importantly, we theoretically prove that the stochastic versions with inexact projections enjoy the same linear convergence rate as their deterministic counterparts. We also show that the algorithms behave predictably even when the gradients are contaminated by noise. Experimentally, stochastic approaches have shown particular advantages over the deterministic counterparts in many problems of interest such as linear regression and matrix recovery.

\section{Proofs}
\label{sec::proofs}

\subsection{Consequences of the $\oper D$-RSC and $\oper D$-RSS}

\noindent The first corollary provides a useful upper bound for the gradient, which we call co-coercivity.
\begin{cor}
\label{cor::consequences of RSS}
Assume the function $f(w)$ satisfies the $\oper D$-RSS property, then
\begin{equation}
\label{inq::33}
\inner{w'-w,\nabla f(w') - \nabla f(w)} \leq \rho^+_s \norm{w'-w}_2^2
\end{equation}
for all vectors $w$ and $w'$ of size $n$ such that $|\supp_{\oper D}(w) \cup \supp_{\oper D}(w')| \leq s $. In addition, let $\Omega = \supp_{\oper D} (w) \cup \supp_{\oper D}(w')$; then we have
\begin{equation}
\label{inq::coercivity}
[\textbf{Co-coercivity}] \quad\quad \norm{\oper P_{\Omega}(\oper \nabla f(w') - \nabla f(w))}^2_2 \leq \rho^+_s \inner{w'-w,\nabla f(w') - \nabla f(w)}.
\end{equation}

\end{cor}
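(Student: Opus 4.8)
The plan has two parts. Inequality~\eqref{inq::33} is immediate: by Cauchy--Schwarz followed by the $\oper D$-RSS bound (applicable since $|\supp_{\oper D}(w)\cup\supp_{\oper D}(w')|\leq s$),
$$
\inner{w'-w,\nabla f(w')-\nabla f(w)}\leq \norm{w'-w}_2\,\norm{\nabla f(w')-\nabla f(w)}_2\leq \rho^+_s\norm{w'-w}_2^2 .
$$
So the real work is the co-coercivity bound~\eqref{inq::coercivity}, which I would obtain by adapting the classical argument that a convex function with a Lipschitz gradient has a co-coercive gradient, carried out \emph{inside the subspace} $\oper R(D_{\Omega})$ so that only the restricted $\oper D$-RSS estimate is ever needed.

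Set $\Omega=\supp_{\oper D}(w)\cup\supp_{\oper D}(w')$ and $V=\oper R(D_{\Omega})$, so $|\Omega|\leq s$, $w,w'\in V$, and $\oper P_{\Omega}$ is the orthogonal projector onto $V$. Introduce $g(z)\triangleq f(z)-\inner{\nabla f(w),z}$, with $\nabla g(z)=\nabla f(z)-\nabla f(w)$. Two facts drive the argument: (a) for $z,z'\in V$, $\oper D$-RSS gives $\norm{\nabla g(z')-\nabla g(z)}_2=\norm{\nabla f(z')-\nabla f(z)}_2\leq \rho^+_s\norm{z'-z}_2$, i.e. $g$ restricted to $V$ has $\rho^+_s$-Lipschitz gradient; and (b) $\nabla g(w)=0$, so (using convexity of $f$, hence $g$, on $V$ in the settings of interest) $w$ minimizes $g$ over $V$. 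Applying the descent lemma on $V$ (the quadratic upper bound coming from (a)) at the point $z'=w'-\tfrac{1}{\rho^+_s}\oper P_{\Omega}\nabla g(w')\in V$ yields
$$
g(z')\leq g(w')+\inner{\oper P_{\Omega}\nabla g(w'),z'-w'}+\tfrac{\rho^+_s}{2}\norm{z'-w'}_2^2=g(w')-\tfrac{1}{2\rho^+_s}\norm{\oper P_{\Omega}\nabla g(w')}_2^2,
$$
which combined with $g(w)\leq g(z')$ from (b) and $\oper P_{\Omega}\nabla g(w')=\oper P_{\Omega}(\nabla f(w')-\nabla f(w))$ gives
$$
\tfrac{1}{2\rho^+_s}\norm{\oper P_{\Omega}(\nabla f(w')-\nabla f(w))}_2^2\leq f(w')-f(w)-\inner{\nabla f(w),w'-w}.
$$
Running the same computation with $w$ and $w'$ interchanged and adding the two inequalities makes the function values cancel, leaving $\tfrac{1}{\rho^+_s}\norm{\oper P_{\Omega}(\nabla f(w')-\nabla f(w))}_2^2\leq \inner{\nabla f(w')-\nabla f(w),w'-w}$; multiplying by $\rho^+_s$ is~\eqref{inq::coercivity}.

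The step needing the most care is the descent/optimality argument confined to $V$: one must verify that every point appearing — $w$, $w'$, and in particular $z'=w'-\tfrac{1}{\rho^+_s}\oper P_{\Omega}\nabla g(w')$ — lies in $V$, which has dimension at most $s$, so that the $\oper D$-RSS estimate legitimately controls all the relevant increments; and that the critical point $w$ of $g|_V$ is genuinely a minimizer, which is where (restricted) convexity of $f$ on $\oper R(D_{\Omega})$ is used, as it holds in the paper's concrete applications where the $f_i$ are convex quadratics. The remaining ingredients — the integral-form Taylor estimate giving the descent lemma, and the identification $\oper P_V=\oper P_{\Omega}$ — are routine.
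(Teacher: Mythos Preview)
Your approach is essentially the same as the paper's: for co-coercivity you both introduce the shifted function $G(x)=f(x)-\inner{\nabla f(w),x}$, apply the descent lemma on the subspace at the point $w'-\tfrac{1}{\rho^+_s}\oper P_{\Omega}\nabla G(w')$, use that $w$ minimizes $G$ there, and symmetrize. The only differences are that for~\eqref{inq::33} the paper symmetrizes the descent lemma rather than using Cauchy--Schwarz, and the paper simply asserts $G(x)\geq G(w)$ on the subspace without comment, whereas you (correctly) flag that this step requires restricted convexity of $f$.
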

\begin{proof}
From the definition of $\oper D$-RSS, we can show that
\begin{equation}
\label{inq::D-RSS variant}
f(w') - f(w) - \inner{\nabla f(w), w'-w} \leq \frac{\rho^+_s}{2} \norm{w'-w}_2^2.
\end{equation}
Similarly, interchanging the role of $w$ and $w'$, we have
$$
f(w) - f(w') - \inner{\nabla f(w'), w-w'} \leq \frac{\rho^+_s}{2} \norm{w'-w}_2^2.
$$
Taking the summation of these two inequalities leads to the first claim. 

To prove the second claim, we define a function $G(x) \triangleq f(x) - \inner{\nabla f(w), x}$, it is easy to see that for any $x$ and $y$ with $\supp_{\oper D}(x) \cup \supp_{\oper D}(y) \in \Omega$, we have
$$
\norm{\nabla G(x) - \nabla G(y)}_2 = \norm{\nabla f(x) - \nabla f(y)}_2 \leq \rho^+_s \norm{x-y}_2.
$$
This implies that $G(x)$ has $\oper D$-RSS with constant $\rho^+_s$. In particular, we get a similar inequality as in (\ref{inq::D-RSS variant})
\begin{equation}
\label{inq::D-RSS for G(x)}
G(x) - G(y) - \inner{\nabla G(y), x-y} \leq \frac{\rho^+_s}{2} \norm{x-y}_2^2.
\end{equation}

\noindent We also observe that
$$
G(x) - G(w) = f(x) - f(w) - \inner{\nabla f(w), x-w} \geq 0
$$ 
for all $x$ such that $\supp_{\oper D}(x) \in \Omega$. Let $x \triangleq w' - \frac{1}{\rho^+_s} \oper P_{\Omega} \nabla G(w')$, then it is clear that $\supp_{\oper D}(x) \in \Omega$. Thus, by $\oper D$-RSS property of $G(x)$, we have
\begin{equation}
\begin{split}
\nonumber
G(w) &\leq G \left(w' - \frac{1}{\rho^+_s} \oper P_{\Omega} \nabla G(w') \right) \\
&\leq G(w') + \inner{\nabla G(w'), - \frac{1}{\rho^+_s} \oper P_{\Omega} \nabla G(w')} + \frac{1}{2\rho^+_s} \norm{\oper P_{\Omega} \nabla G(w')}_2^2 \\
&= G(w') - \frac{1}{2\rho^+_s} \norm{\oper P_{\Omega} \nabla G(w')}_2^2.
\end{split}
\end{equation}
Replacing the function $G(x)$ in this equality we get
$$
\frac{1}{2\rho^+_s} \norm{\oper P_{\Omega} (\nabla f(w') - \nabla f(w))}_2^2 \leq f(w') - f(w) - \inner{\nabla f(w), w'-w}.
$$
The claim follows by adding the two inequalities with $w$ and $w'$ interchanged.
\end{proof}

\noindent The following corollary provides the lower bound for the gradient.
\begin{cor}
\label{cor::consequences of RSC}
Assume the function $F(w)$ satisfies the $\oper D$-RSC, then
\begin{equation}
\label{inq::37}
\rho^-_{s} \norm{w'-w}_2^2 \leq \inner{w'-w,\nabla F(w') - \nabla F(w)}
\end{equation}
for all $w$ and $w'$ such that $|\supp_{\oper D}(w) \cup \supp_{\oper D}(w')| \leq s$.
\end{cor}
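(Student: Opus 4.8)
The plan is to mimic exactly the summation trick used at the start of the proof of Corollary~\ref{cor::consequences of RSS}, but now applied to the $\oper D$-RSC inequality~\eqref{eqt::D-RSC} rather than the $\oper D$-RSS inequality. First I would fix $w,w'$ with $|\supp_{\oper D}(w)\cup\supp_{\oper D}(w')|\leq s$ and write down~\eqref{eqt::D-RSC} directly:
\begin{equation}
\nonumber
F(w') - F(w) - \inner{\nabla F(w), w'-w} \geq \frac{\rho^-_s}{2}\norm{w'-w}_2^2 .
\end{equation}
Then, since the support condition $|\supp_{\oper D}(w)\cup\supp_{\oper D}(w')|\leq s$ is symmetric in $w$ and $w'$, the same inequality holds with the roles of $w$ and $w'$ exchanged:
\begin{equation}
\nonumber
F(w) - F(w') - \inner{\nabla F(w'), w-w'} \geq \frac{\rho^-_s}{2}\norm{w'-w}_2^2 .
\end{equation}

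Next I would add these two inequalities. The function-value terms $F(w')-F(w)$ and $F(w)-F(w')$ cancel, and the two inner-product terms combine, using $\inner{\nabla F(w'),w-w'} = -\inner{\nabla F(w'),w'-w}$, into $\inner{\nabla F(w')-\nabla F(w),\,w'-w}$. The right-hand side sums to $\rho^-_s\norm{w'-w}_2^2$, which is precisely~\eqref{inq::37}. That completes the argument.

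There is essentially no obstacle here: unlike the co-coercivity bound~\eqref{inq::coercivity}, which required the auxiliary construction of $G$ and evaluating it at the cleverly chosen point $w'-\frac{1}{\rho^+_s}\oper P_\Omega\nabla G(w')$, the lower bound~\eqref{inq::37} follows from nothing more than a direct symmetrization of the defining inequality. The only point worth a moment's care is confirming that applying~\eqref{eqt::D-RSC} with $w$ and $w'$ swapped is legitimate, which it is because the hypothesis on the combined $\oper D$-support is symmetric; everything else is a one-line cancellation.
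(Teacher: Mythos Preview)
Your proposal is correct and matches the paper's proof essentially line for line: write down the $\oper D$-RSC inequality~\eqref{eqt::D-RSC}, swap $w$ and $w'$, and add the two inequalities so that the function-value terms cancel and the gradient terms combine into $\inner{w'-w,\nabla F(w')-\nabla F(w)}$. There is nothing to add.
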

\begin{proof}
From the $\oper D$-RSC assumption, we can write
$$
F(w') - F(w) - \inner{\nabla F(w), w'-w} \geq \frac{\rho^-_s}{2} \norm{w'-w}_2^2.
$$
Swapping $w$ and $w'$, we also have
$$
F(w) - F(w') - \inner{\nabla F(w'),w-w'} \geq \frac{\rho^-_s}{2} \norm{w'-w}_2^2.
$$
The result follows by adding the two inequalities. 
\end{proof}

\noindent The next corollary provides key estimates for our convergence analysis. Recall that we assume $\{f_i(w) \}_{i=1}^M$ satisfies the $\oper D$-RSS and $F(w) = \sum_{i=1}^M f_i(w)$ satisfies the $\oper D$-RSC. 
\begin{cor}
\label{cor::StoIHT 1st corollary}
Let $i$ be an index selected with probability $p(i)$ from the set $[M]$. For any fixed sparse vectors $w$ and $w'$, let $\Omega$ be a set such that $\supp_{\oper D}(w) \cup \supp_{\oper D}(w') \in \Omega$ and denote $s = |\Omega|$. We have
\begin{equation}
\label{inq::1st key observation}
\E_i \norm{w' - w - \frac{\gamma}{Mp(i)} \oper P_{\Omega} \left(\nabla f_i(w') - \nabla f_i(w) \right)}_2 \leq \sqrt{1- (2 - \gamma \alpha_s) \gamma \rho^-_s } \norm{w'-w}_2
\end{equation}
where we define $\alpha_s \triangleq \max_i \frac{\rho^+_s(i)}{Mp(i)}$. In addition, we have
\begin{equation}
\label{inq::2nd key observation}
\E_i \norm{w' - w - \frac{\gamma}{Mp(i)} \left(\nabla f_i(w') - \nabla f_i(w) \right)}_2 \leq \sqrt{1 + \gamma^2 \alpha_s \overline{\rho}^+_s - 2\gamma \rho^-_s } \norm{w'-w}_2,
\end{equation}
where $\overline{\rho}^+_s \triangleq \frac{1}{M} \sum_i \rho^+_s(i)$.
\end{cor}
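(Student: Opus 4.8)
The plan is to derive both inequalities from the same template: first apply Jensen's inequality to replace $\E_i\norm{\cdot}_2$ by $\sqrt{\E_i\norm{\cdot}_2^2}$, then expand the square and control the cross term and the quadratic term separately, using the unbiasedness identity $\E_i\frac{1}{Mp(i)}\nabla f_i(w)=\nabla F(w)$ together with the $\oper D$-RSC and $\oper D$-RSS consequences recorded in Corollaries~\ref{cor::consequences of RSS} and~\ref{cor::consequences of RSC}. Throughout, abbreviate $v\triangleq w'-w$ and $g_i\triangleq\nabla f_i(w')-\nabla f_i(w)$; since $\supp_{\oper D}(v)\subseteq\Omega$ we have $\oper P_{\Omega}v=v$, and because $\oper P_{\Omega}$ is self-adjoint and idempotent, $\inner{v,\oper P_{\Omega}g_i}=\inner{v,g_i}$.

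For~\eqref{inq::1st key observation}, I would expand
\[
\norm{v-\tfrac{\gamma}{Mp(i)}\oper P_{\Omega}g_i}_2^2=\norm{v}_2^2-\tfrac{2\gamma}{Mp(i)}\inner{v,g_i}+\tfrac{\gamma^2}{(Mp(i))^2}\norm{\oper P_{\Omega}g_i}_2^2,
\]
take $\E_i$ of this identity, and handle the middle term by unbiasedness, $\E_i\frac{1}{Mp(i)}\inner{v,g_i}=\inner{v,\nabla F(w')-\nabla F(w)}$. For the last term, co-coercivity~\eqref{inq::coercivity} applied to $f_i$ (its proof goes through verbatim for any set $\Omega\supseteq\supp_{\oper D}(w)\cup\supp_{\oper D}(w')$ with $|\Omega|\leq s$) gives $\norm{\oper P_{\Omega}g_i}_2^2\leq\rho^+_s(i)\inner{v,g_i}$, which in particular forces $\inner{v,g_i}\geq0$; hence, since $\rho^+_s(i)/(Mp(i))\leq\alpha_s$,
\[
\gamma^2\,\E_i\tfrac{1}{(Mp(i))^2}\norm{\oper P_{\Omega}g_i}_2^2\leq\gamma^2\alpha_s\,\E_i\tfrac{1}{Mp(i)}\inner{v,g_i}=\gamma^2\alpha_s\inner{v,\nabla F(w')-\nabla F(w)}.
\]
Collecting the three contributions gives $\E_i\norm{\cdot}_2^2\leq\norm{v}_2^2-\gamma(2-\gamma\alpha_s)\inner{v,\nabla F(w')-\nabla F(w)}$; provided $\gamma$ is small enough that $2-\gamma\alpha_s\geq0$ (which holds throughout our analysis), the $\oper D$-RSC bound~\eqref{inq::37} yields $\E_i\norm{\cdot}_2^2\leq\bigl(1-(2-\gamma\alpha_s)\gamma\rho^-_s\bigr)\norm{v}_2^2$, and Jensen's inequality completes~\eqref{inq::1st key observation}.

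For~\eqref{inq::2nd key observation}, I would expand $\norm{v-\tfrac{\gamma}{Mp(i)}g_i}_2^2$ in the same manner but without a projection; after taking $\E_i$, the cross term is again $-2\gamma\inner{v,\nabla F(w')-\nabla F(w)}\leq-2\gamma\rho^-_s\norm{v}_2^2$ by unbiasedness and~\eqref{inq::37}. For the quadratic term, the $\oper D$-RSS inequality $\norm{g_i}_2\leq\rho^+_s(i)\norm{v}_2$ together with $\rho^+_s(i)/(Mp(i))\leq\alpha_s$ gives
\[
\gamma^2\,\E_i\tfrac{1}{(Mp(i))^2}\norm{g_i}_2^2\leq\gamma^2\,\E_i\tfrac{(\rho^+_s(i))^2}{(Mp(i))^2}\norm{v}_2^2\leq\gamma^2\alpha_s\,\E_i\tfrac{\rho^+_s(i)}{Mp(i)}\norm{v}_2^2=\gamma^2\alpha_s\overline{\rho}^+_s\norm{v}_2^2,
\]
the last equality being the identity $\E_i\frac{\rho^+_s(i)}{Mp(i)}=\frac{1}{M}\sum_i\rho^+_s(i)=\overline{\rho}^+_s$. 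Thus $\E_i\norm{\cdot}_2^2\leq\bigl(1+\gamma^2\alpha_s\overline{\rho}^+_s-2\gamma\rho^-_s\bigr)\norm{v}_2^2$, and one more application of Jensen's inequality finishes~\eqref{inq::2nd key observation}.

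The delicate step --- the one I would check most carefully --- is the first inequality: one has to apply co-coercivity in exactly the right way so that the $\rho^+$-weighted quadratic term and the cross term fuse into a single multiple of $\inner{v,\nabla F(w')-\nabla F(w)}$ with the correct sign, which is why the coefficient $2-\gamma\alpha_s$ must be nonnegative in order to invoke the $\oper D$-RSC lower bound in the right direction. One must also notice that $\inner{v,g_i}\geq0$ is not an added hypothesis but a free consequence of co-coercivity, which is precisely what legitimizes pulling the maximum $\alpha_s$ out of $\E_i\frac{\rho^+_s(i)}{Mp(i)}\inner{v,g_i}$. Everything else reduces to routine expansion of squared norms and the linearity of $\E_i$.
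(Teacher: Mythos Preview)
Your proof is correct and follows essentially the same approach as the paper's: expand the squared norm, use co-coercivity~\eqref{inq::coercivity} (resp.\ the $\oper D$-RSS Lipschitz bound) on the quadratic term, unbiasedness plus the $\oper D$-RSC inequality~\eqref{inq::37} on the cross term, and finish with Jensen. Your explicit remarks that $\langle v,g_i\rangle\geq 0$ (from co-coercivity) justifies extracting $\alpha_s$ and that $2-\gamma\alpha_s\geq 0$ is needed to apply~\eqref{inq::37} in the correct direction are useful clarifications the paper leaves implicit.
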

The difference between the estimates (\ref{inq::1st key observation}) and (\ref{inq::2nd key observation}) is the additional term $\norm{\oper P_{\Omega^c} \left(\nabla f_i(w') - \nabla f_i(w) \right)}_2$ with $\Omega^c = [n]\backslash \Omega$ appearing in (\ref{inq::2nd key observation}). 
\begin{proof}
We will use the co-coercivity property that appeared in inequality (\ref{inq::coercivity}) in Corollary \ref{cor::consequences of RSS}. We have
\begin{equation*}
\begin{split}
\nonumber
&\E_i\norm{w' - w - \frac{\gamma}{Mp(i)} \oper P_{\Omega} \left(\nabla f_i(w') - \nabla f_i(w) \right)}^2_2 \\
&= \norm{w' - w}_2^2 +  \E_i \frac{\gamma^2}{(Mp(i))^2} \norm{\oper P_{\Omega} \left( \nabla f_i(w') - \nabla f_i(w) \right)}_2^2 \\
&\quad- 2 \gamma \E_i\inner{w' - w,   \oper P_{\Omega} \frac{1}{Mp(i)} \left(\nabla f_i(w') - \nabla f_i(w) \right)} \\
&\leq \norm{w' - w}_2^2 + \gamma^2 \E_i \frac{\rho^+_{s}(i)}{(Mp(i))^2} \inner{w'-w, \nabla f_i(w') - \nabla f_i(w) } \\
&\quad- 2 \gamma \E_i\inner{w' - w,   \frac{1}{Mp(i)} \left(\nabla f_i(w') - \nabla f_i(w) \right) } \\
&\leq \norm{w' - w}_2^2 + \gamma^2 \max_i \frac{\rho^+_{s}(i)}{Mp(i)} \E_i \inner{w'-w, \frac{1}{Mp(i)} \left(\nabla f_i(w') - \nabla f_i(w) \right)} \\
&\quad- 2 \gamma \E_i\inner{w' - w,   \frac{1}{Mp(i)} \left(\nabla f_i(w') - \nabla f_i(w) \right) } \\
&= \norm{w' - w}_2^2 - \left(2\gamma - \gamma^2 \max_i \frac{\rho^+_{s}(i)}{Mp(i)} \right) \E_i\inner{w' - w,   \frac{1}{Mp(i)} \left(\nabla f_i(w') - \nabla f_i(w) \right) } \\
&= \norm{w' - w}_2^2 - (2\gamma - \gamma^2 \alpha_s) \inner{w' - w,   \nabla F(w') - \nabla F(w) }  \\
&\leq \norm{w' - w}_2^2 - (2\gamma - \gamma^2 \alpha_s) \rho^-_s \norm{w' - w}_2^2,
\end{split}
\end{equation*}
where the first inequality follows from (\ref{inq::coercivity}) and the last inequality follows from (\ref{inq::37}). Applying the known result $(\E Z)^2 \leq \E Z^2$ completes the proof of (\ref{inq::1st key observation}).

The proof of (\ref{inq::2nd key observation}) is similar to that of (\ref{inq::1st key observation}), except now we are not able to apply the co-coercivity inequality. Expanding the left hand side and applying the definition of $\oper D$-RSS together with the inequality (\ref{inq::37}), we derive
\begin{equation}
\begin{split}
\nonumber
&\E_i\norm{w' - w - \frac{\gamma}{Mp(i)} \left(\nabla f_i(w') - \nabla f_i(w) \right)}^2_2 \\
&= \norm{w' - w}_2^2 +  \E_i \frac{\gamma^2}{(Mp(i))^2} \norm{\nabla f_i(w') - \nabla f_i(w) }_2^2 \\
&\quad- 2 \gamma \E_i\inner{w' - w, \frac{1}{Mp(i)} \left(\nabla f_i(w') - \nabla f_i(w) \right)} \\
&= \norm{w' - w}_2^2 + \E_i \frac{\gamma^2}{(Mp(i))^2} \norm{\nabla f_i(w') - \nabla f_i(w) }_2^2\\
&\quad- 2 \gamma \inner{w' - w, \nabla F(w') - \nabla F_i(w) } \\
&\leq \norm{w' - w}_2^2 + \E_i \frac{\gamma^2}{(Mp(i))^2} (\rho^+_s(i))^2 \norm{w'-w}_2^2 - 2 \gamma \rho^-_{s}  \norm{w'-w}_2^2. 
\end{split}
\end{equation}
 We further have
$$
 \E_i \frac{\gamma^2}{(Mp(i))^2} (\rho^+_s(i))^2 \leq \gamma^2 \max_i \frac{\rho^+_s(i)}{Mp(i)} \E_i \frac{\rho^+_s(i)}{Mp(i)} = \gamma^2 \alpha_s \sum_i \frac{\rho^+_s(i)}{Mp(i)} p(i) = \gamma^2 \alpha_s \overline{\rho}^+_s,
$$
where we recall that $\alpha_s = \max_i \frac{\rho^+_s(i)}{Mp(i)}$ and $\overline{\rho}^+_s = \frac{1}{M} \sum_i \rho^+_s(i)$. Substitute this result into the above inequality and then use the inequality $(\E Z)^2 \leq \E Z^2$ to complete the proof.
\end{proof}

\subsection{Proof of Theorem \ref{thm::StoIHT}}
\label{sub::proof of theorem 1}

\begin{proof} [Proof of Theorem \ref{thm::StoIHT}]
We notice that $\norm{w^{t+1} - b^t}_2 \leq \eta \norm{b^t_k - b^t}_2 \leq \eta \norm{w^{\star} - b^t}_2$ where $b^t_k$ is the best $k$-sparse approximation of $b^t$ with respect to $\oper D$. Thus,
$$
\norm{w^{t+1} - w^{\star} + w^{\star} - b^t}_2^2 \leq \eta^2 \norm{w^{\star} - b^t}_2^2.
$$

\noindent Expanding the left hand side of this inequality leads to
\begin{equation}
\begin{split}
\nonumber
\norm{w^{t+1} - w^{\star}}_2^2 &\leq 2 \inner{w^{t+1} - w^{\star}, b^t - w^{\star}} + (\eta^2-1) \norm{b^t-w^{\star}}_2^2 \\
&= 2 \inner{w^{t+1} - w^{\star}, w^t - w^{\star} - \frac{\gamma}{Mp(i_t)} \nabla f_{i_t}(w^t)} \\
&\quad+ (\eta^2-1) \norm{w^t - w^{\star} - \frac{\gamma}{Mp(i_t)} \nabla f_{i_t}(w^t)}_2^2 \\
&= 2 \inner{w^{t+1} - w^{\star}, w^t - w^{\star} - \frac{\gamma}{Mp(i_t)} \left( \nabla f_{i_t}(w^t) - \nabla f_{i_t}(w^{\star}) \right)} \\
&\quad- 2\inner{w^{t+1} - w^{\star}, \frac{\gamma}{Mp(i_t)} \nabla f_{i_t} (w^{\star})} + (\eta^2-1) \norm{w^t - w^{\star} - \frac{\gamma}{Mp(i_t)} \nabla f_{i_t}(w^t)}_2^2.
\end{split}
\end{equation}

\noindent Denote $\Omega = \supp_{\oper D}(w^{t+1}) \cup \supp_{\oper D}(w^t) \cup \supp_{\oper D}(w^{\star})$ and notice that $|\Omega| \leq 3k$, we get
\begin{equation}
\begin{split}
\nonumber
\norm{w^{t+1} - w^{\star}}_2^2 &\leq 2 \inner{w^{t+1} - w^{\star}, w^t - w^{\star} - \frac{\gamma}{Mp(i_t)}\oper P_{\Omega} \left( \nabla f_{i_t}(w^t) - \nabla f_{i_t}(w^{\star}) \right)} \\
&\quad- 2\inner{w^{t+1} - w^{\star}, \frac{\gamma}{Mp(i_t)} \oper P_{\Omega} \nabla f_{i_t} (w^{\star})} + (\eta^2-1) \norm{w^t - w^{\star} - \frac{\gamma}{Mp(i_t)} \nabla f_{i_t}(w^t)}_2^2 \\
&\leq 2 \norm{w^{t+1} - w^{\star}}_2 \\
&\quad\times \underbrace{\left( \norm{w^t - w^{\star} - \frac{\gamma}{Mp(i_t)} \oper P_{\Omega} \left( \nabla f_{i_t}(w^t) - \nabla f_{i_t}(w^{\star}) \right)}_2 + \norm{\frac{\gamma}{Mp(i_t)} \oper P_{\Omega} \nabla f_{i_t}(w^{\star})}_2 \right)}_u \\
&\quad+ \underbrace{(\eta^2-1) \norm{w^t - w^{\star} - \frac{\gamma}{Mp(i_t)} \nabla f_{i_t}(w^t)}_2^2 }_v.
\end{split}
\end{equation}

\noindent Solving this quadratic inequality $x^2 - 2ux - v \leq 0$ with $x = \norm{w^{t+1} - w^{\star}}_2$, we get $x \leq u + \sqrt{u^2+v} \leq 2u+ \sqrt{v}$. Substituting the expressions for $u$ and $v$ above, we arrive at
\begin{equation}
\begin{split}
\nonumber
\norm{w^{t+1} - w^{\star}}_2 &\leq 2\left( \norm{w^t - w^{\star} - \frac{\gamma}{Mp(i_t)} \oper P_{\Omega} \left( \nabla f_{i_t}(w^t) - \nabla f_{i_t}(w^{\star}) \right)}_2 + \norm{\frac{\gamma}{Mp(i_t)} \oper P_{\Omega} \nabla f_{i_t}(w^{\star})}_2 \right) \\
&\quad+ \sqrt{\eta^2-1} \norm{w^t - w^{\star} - \frac{\gamma}{Mp(i_t)} \nabla f_{i_t}(w^t)}_2 \\
&\leq 2\left( \norm{w^t - w^{\star} - \frac{\gamma}{Mp(i_t)} \oper P_{\Omega} \left( \nabla f_{i_t}(w^t) - \nabla f_{i_t}(w^{\star}) \right)}_2 + \norm{\frac{\gamma}{Mp(i_t)} \oper P_{\Omega} \nabla f_{i_t}(w^{\star})}_2 \right) \\
&\quad+ \sqrt{\eta^2-1} \left( \norm{w^t - w^{\star} - \frac{\gamma}{Mp(i_t)} \left( \nabla f_{i_t}(w^t) - \nabla f_{i_t}(w^{\star}) \right)}_2 + \norm{\frac{\gamma}{Mp(i_t)} \nabla f_{i_t}(w^{\star})}_2 \right).
\end{split}
\end{equation}

\noindent Denote $I_t$ as the set containing all indices $i_1, i_2,..., i_t$ randomly selected at or before step $t$ of the algorithm: $I_t = \{i_1,...,i_t\}$. It is clear that $I_t$ determines the solutions $w^1,...,w^{t+1}$. We also denote the conditional expectation $\E_{i_t | I_{t-1}} \norm{w^{t+1} - w^{\star}}_2 \triangleq \E_{i_t} (\norm{w^{t+1} - w^{\star}}_2| I_{t-1})$. Now taking the conditional expectation on both sides of the above inequality  we obtain
\begin{equation}
\begin{split}
\nonumber
&\E_{i_t | I_{t-1}} \norm{w^{t+1} - w^{\star}}_2 \\
&\leq 2\left( \E_{i_t | I_{t-1}} \norm{w^t - w^{\star} - \frac{\gamma}{Mp(i_t)} \oper P_{\Omega} \left( \nabla f_{i_t}(w^t) - \nabla f_{i_t}(w^{\star}) \right)}_2 + \E_{i_t | I_{t-1}} \norm{\frac{\gamma}{Mp(i_t)} \oper P_{\Omega} \nabla f_{i_t} (w^{\star})}_2 \right)  \\
&\quad+ \sqrt{\eta^2-1} \left( \E_{i_t | I_{t-1}} \norm{w^t - w^{\star} - \frac{\gamma}{Mp(i_t)} \left( \nabla f_{i_t} (w^t) - \nabla f_{i_t} (w^{\star}) \right)}_2 + \E_{i_t | I_{t-1}} \norm{\frac{\gamma}{Mp(i_t)} \nabla f_{i_t} (w^{\star})}_2 \right) .
\end{split}
\end{equation}

\noindent Conditioning on $I_{t-1}$, $w^t$ can be seen as a fixed vector. We apply the inequality (\ref{inq::1st key observation}) of Corollary \ref{cor::StoIHT 1st corollary} for the first term and (\ref{inq::2nd key observation}) for the third term, we get 
\begin{equation}
\begin{split}
\nonumber
&\E_{i_t | I_{t-1}} \norm{w^{t+1} - w^{\star}}_2 \\
&\leq 2 \sqrt{\left(1- (2\gamma - \gamma^2 \alpha_{3k}) \rho^-_{3k} \right)} \norm{w^t-w^{\star}}_2 + 2 \frac{\gamma}{\min_{i_t} Mp(i_t)} \E_{i_t}\norm{\oper P_{\Omega} \nabla f_{i_t} (w^{\star})}_2 \\
&\quad+ \sqrt{\eta^2-1}  \sqrt{1 + \gamma^2 \alpha_{3k} \overline{\rho}^+_{3k} - 2\gamma \rho^-_{3k} } \norm{w^t-w^{\star}}_2 + \sqrt{\eta^2-1}  \frac{\gamma}{\min_{i_t} Mp(i_t)} \E_{i_t} \norm{\nabla f_{i_t} (w^{\star})}_2 \\
&= \left( 2 \sqrt{\left(1- (2\gamma - \gamma^2 \alpha_{3k}) \rho^-_{3k} \right)} + \sqrt{(\eta^2-1)\left(1 + \gamma^2 \alpha_{3k} \overline{\rho}^+_{3k} - 2\gamma \rho^-_{3k} \right)} \right) \norm{w^t - w^{\star}}_2 \\
&\quad+ \frac{\gamma}{\min_{i_t} Mp(i_t)} \left( 2 \E_{i_t} \norm{\oper P_{\Omega} \nabla f_{i_t} (w^{\star})}_2 + \sqrt{\eta^2 - 1} \E_{i_t} \norm{\nabla f_{i_t} (w^{\star})}_2 \right) \\
&\leq \kappa \norm{w^t -w^{\star}}_2 +  \sigma,
\end{split}
\end{equation}
where $\kappa$ and $\sigma_{w^{\star}}$ are defined in Theorem \ref{thm::StoIHT}. Taking the expectation on both sides with respect to $I_{t-1}$ yields
$$
\E_{I_t} \norm{w^{t+1}-w^{\star}}_2 \leq \kappa \E_{I_{t-1}} \norm{w^t-w^{\star}} + \sigma.
$$

\noindent Applying this result recursively over $t$ iterations yields the desired result:
\begin{equation}
\begin{split}
\nonumber
\E_{I_t} \norm{w^{t+1}-w^{\star}}_2 &\leq \kappa^{t+1} \norm{w^0 - w^{\star}}_2 + \sum_{j=0}^t \kappa^j \sigma \\
&\leq \kappa^{t+1} \norm{w^0 - w^{\star}}_2  + \frac{1}{1-\kappa} \sigma.
\end{split}
\end{equation}
\end{proof}

\subsection{Proof of Theorem \ref{thm::StoGradMP}}
\label{subsec::Proof of StoGradMP theorem}

The proof of Theorem \ref{thm::StoGradMP} is a consequence of the following three lemmas. Denote $I_t$ as the set containing all indices $i_1, i_2,..., i_t$ randomly selected at or before step $t$ of the algorithm: $I_t = \{i_1,...,i_t\}$ and denote the conditional expectation $\E_{i_t | I_{t-1}} \norm{w^{t+1} - w^{\star}}_2 \triangleq \E_{i_t} (\norm{w^{t+1} - w^{\star}}_2| I_{t-1})$.

\begin{lem}
\label{lem::bound l2 w^(t+1)-w*}
The recovery error at the $(t+1)$-th iteration is upper bounded by
$$
\norm{w^{t+1} - w^{\star}}_2 \leq (1+\eta_2) \norm{b^t - w^{\star}}_2.
$$
\end{lem}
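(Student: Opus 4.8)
\textbf{Proof proposal for Lemma \ref{lem::bound l2 w^(t+1)-w*}.}
The plan is to relate $w^{t+1}$ to $b^t$ through the guarantee of the \texttt{approx} operator and then insert $w^\star$ by a single application of the triangle inequality. Recall from Algorithm \ref{alg:StoGradMP} that $\Lambda = \text{approx}_k(b^t,\eta_2)$ and $w^{t+1} = \oper P_\Lambda(b^t)$. By the defining property \eqref{eqt::approximation definition} of the approximation operator,
\begin{equation}
\nonumber
\norm{w^{t+1} - b^t}_2 = \norm{\oper P_\Lambda b^t - b^t}_2 \leq \eta_2 \norm{b^t - (b^t)_k}_2,
\end{equation}
where $(b^t)_k$ is the best $k$-sparse approximation of $b^t$ with respect to $\oper D$.

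The key observation is that $w^\star$ is a feasible point of \eqref{opt::general min}, so $\norm{w^\star}_{0,\oper D} \leq k$; hence $w^\star$ lies in the span of at most $k$ atoms of $\oper D$ and is therefore one admissible competitor in the minimization defining $(b^t)_k$. Consequently $\norm{b^t - (b^t)_k}_2 \leq \norm{b^t - w^\star}_2$, and combining this with the previous display gives $\norm{w^{t+1} - b^t}_2 \leq \eta_2 \norm{b^t - w^\star}_2$.

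Finally I would apply the triangle inequality,
\begin{equation}
\nonumber
\norm{w^{t+1} - w^\star}_2 \leq \norm{w^{t+1} - b^t}_2 + \norm{b^t - w^\star}_2 \leq \eta_2\norm{b^t - w^\star}_2 + \norm{b^t - w^\star}_2 = (1+\eta_2)\norm{b^t - w^\star}_2,
\end{equation}
which is the claim. There is essentially no obstacle here: the only point needing care is noting that feasibility of $w^\star$ makes it a valid comparison element for the best $k$-sparse approximation, so that the approximation guarantee can be transferred from the (unknown) optimum $(b^t)_k$ to $w^\star$. The substantive work in the convergence proof will instead be in the subsequent lemmas that bound $\norm{b^t - w^\star}_2$ in terms of $\norm{w^t - w^\star}_2$ using the $\oper D$-RSC/$\oper D$-RSS estimates of Corollary \ref{cor::StoIHT 1st corollary}.
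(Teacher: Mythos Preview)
Your proof is correct and follows essentially the same approach as the paper: both use the defining inequality of the \texttt{approx} operator to bound $\norm{w^{t+1}-b^t}_2$ by $\eta_2\norm{b^t_k-b^t}_2$, invoke feasibility of $w^\star$ to replace $b^t_k$ by $w^\star$, and finish with the triangle inequality. If anything, you make the feasibility step more explicit than the paper does.
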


\begin{lem}
\label{lem::bound l2 b^t - w*}
Denote $\widehat{\Gamma}$ as the set obtained from the $t$-th iteration and $i$ as the index selected randomly from $[M]$ with probability $p(i)$. We have,
\begin{equation}
\begin{split}
\nonumber
\E_{I_t} \norm{b^t - w^{\star}}_2 &\leq \sqrt{\frac{\alpha_{4k}}{\rho^-_{4k}}} \E_{I_t} \norm{P_{\widehat{\Gamma}^c} (b^t - w^{\star})}_2 + \sigma_1
\end{split}
\end{equation}
where $\alpha_k = \max_i \frac{\rho^+_k(i)}{Mp(i)}$ and 
$$
\sigma_1 \triangleq \frac{3}{\rho^-_{4k}} \frac{1}{\min_i Mp(i)} \max_{|\Omega| \leq 3k, i \in [M]}\norm{\oper P_{\Omega} \nabla f_i(w^{\star})}_2.
$$
\end{lem}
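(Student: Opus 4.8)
The plan is to exploit the \emph{exact} optimality of $b^t$ as the minimizer of $F$ over $\oper R(D_{\widehat\Gamma})$, which gives the first-order condition $\oper P_{\widehat\Gamma}\nabla F(b^t)=0$, and to combine this with the $\oper D$-RSC lower bound (Corollary~\ref{cor::consequences of RSC}) and a \emph{weighted} co-coercivity estimate for $F=\frac1M\sum_i f_i$ built from Corollary~\ref{cor::consequences of RSS}. Set $\Omega:=\widehat\Gamma\cup\supp_{\oper D}(w^\star)$, so $|\Omega|\le 4k$ since $|\widehat\Gamma|\le 3k$, and write $Q:=\inner{b^t-w^\star,\nabla F(b^t)-\nabla F(w^\star)}$. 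Because $b^t\in\oper R(D_{\widehat\Gamma})$, the vectors $b^t-w^\star$ and $\oper P_{\widehat\Gamma^c}(b^t-w^\star)$ both lie in $\oper R(D_\Omega)$, and $\oper P_{\widehat\Gamma}(b^t-w^\star)+\oper P_{\widehat\Gamma^c}(b^t-w^\star)=b^t-w^\star$ is an orthogonal splitting inside that $4k$-dimensional subspace.

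The first step is to manipulate $Q$ using optimality. Since $\inner{\oper P_{\widehat\Gamma}(b^t-w^\star),\nabla F(b^t)}=\inner{\oper P_{\widehat\Gamma}(b^t-w^\star),\oper P_{\widehat\Gamma}\nabla F(b^t)}=0$, one obtains
\begin{equation*}
Q=\inner{\oper P_{\widehat\Gamma^c}(b^t-w^\star),\,\nabla F(b^t)-\nabla F(w^\star)}-\inner{\oper P_{\widehat\Gamma}(b^t-w^\star),\,\oper P_{\widehat\Gamma}\nabla F(w^\star)},
\end{equation*}
hence by Cauchy--Schwarz $Q\le\norm{\oper P_{\widehat\Gamma^c}(b^t-w^\star)}_2\,\norm{\oper P_\Omega(\nabla F(b^t)-\nabla F(w^\star))}_2+\norm{b^t-w^\star}_2\,\norm{\oper P_{\widehat\Gamma}\nabla F(w^\star)}_2$. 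The crucial ingredient is then a sharp bound on $\norm{\oper P_\Omega(\nabla F(b^t)-\nabla F(w^\star))}_2$: writing $\nabla F=\E_i\frac1{Mp(i)}\nabla f_i$, applying the co-coercivity inequality~\eqref{inq::coercivity} to each $f_i$ on $\Omega$ in its square-root form $\norm{\oper P_\Omega(\nabla f_i(b^t)-\nabla f_i(w^\star))}_2\le\sqrt{\rho^+_{4k}(i)}\,\sqrt{\inner{b^t-w^\star,\nabla f_i(b^t)-\nabla f_i(w^\star)}}$, and using Cauchy--Schwarz for the expectation over $i$, yields $\norm{\oper P_\Omega(\nabla F(b^t)-\nabla F(w^\star))}_2\le\sqrt{\alpha_{4k}}\,\sqrt{Q}$; the reweighted constant $\alpha_{4k}=\max_i\rho^+_{4k}(i)/(Mp(i))$ appears exactly because of the importance weights. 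Substituting back gives a quadratic inequality in $\sqrt Q$, and then, via $\rho^-_{4k}\norm{b^t-w^\star}_2^2\le Q$ from~\eqref{inq::37}, a quadratic in $\norm{b^t-w^\star}_2$ whose solution has the form $\norm{b^t-w^\star}_2\le\sqrt{\alpha_{4k}/\rho^-_{4k}}\,\norm{\oper P_{\widehat\Gamma^c}(b^t-w^\star)}_2+\frac{\mathrm{const}}{\rho^-_{4k}}\,\norm{\oper P_{\widehat\Gamma}\nabla F(w^\star)}_2$.

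It remains to convert the gradient term into $\sigma_1$. By the unbiasedness identity $\nabla F(w^\star)=\E_i\frac1{Mp(i)}\nabla f_i(w^\star)$ and Jensen's inequality, $\norm{\oper P_{\widehat\Gamma}\nabla F(w^\star)}_2\le\frac1{\min_i Mp(i)}\max_{i\in[M]}\norm{\oper P_{\widehat\Gamma}\nabla f_i(w^\star)}_2\le\frac1{\min_i Mp(i)}\max_{|\Omega|\le 3k,\,i\in[M]}\norm{\oper P_\Omega\nabla f_i(w^\star)}_2$, using $|\widehat\Gamma|\le 3k$; tracking the absolute constant (which can be taken to be $3$) produces exactly $\sigma_1$. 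Since every estimate above holds for each realization of the random set $\widehat\Gamma$, i.e. pointwise in $I_t$, taking $\E_{I_t}$ of both sides gives the stated bound.

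I expect the main obstacle to be securing the \emph{sharp} condition-number factor $\sqrt{\alpha_{4k}/\rho^-_{4k}}$ rather than the crude $\rho^+_{4k}/\rho^-_{4k}$ a naive Lipschitz bound on $\nabla F$ would yield: this forces one to carry the inner product $Q$ intact through the argument, use co-coercivity in square-root form, resolve the resulting quadratic only at the very end, and push the $1/(Mp(i))$ weights through so that $\alpha_{4k}$ (not $\overline\rho^+_{4k}$ or $\rho^+_{4k}$) emerges. A secondary technicality is the projection bookkeeping for a general, possibly infinite, dictionary $\oper D$: one must verify that $\oper P_{\widehat\Gamma^c}(b^t-w^\star)$ lies in $\oper R(D_\Omega)$ and that the orthogonal decomposition of $b^t-w^\star$ relative to $\oper R(D_{\widehat\Gamma})$ used throughout is valid inside that $4k$-dimensional ambient subspace.
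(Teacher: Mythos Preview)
Your argument is correct, but it follows a genuinely different route from the paper's. The paper does \emph{not} work with the inner product $Q=\inner{b^t-w^\star,\nabla F(b^t)-\nabla F(w^\star)}$ directly. Instead it introduces a free step-size parameter $\gamma>0$, bounds $\norm{\oper P_{\widehat\Gamma}(b^t-w^\star)}_2$ by inserting $\pm\frac{\gamma}{Mp(i)}\oper P_{\widehat\Gamma}(\nabla f_i(b^t)-\nabla f_i(w^\star))$, invokes the contraction estimate of Corollary~\ref{cor::StoIHT 1st corollary} (inequality~\eqref{inq::1st key observation}) to get a factor $\sqrt{1-(2\gamma-\gamma^2\alpha_{4k})\rho^-_{4k}}$, combines with the Pythagorean identity $\norm{b^t-w^\star}_2^2=\norm{\oper P_{\widehat\Gamma}(b^t-w^\star)}_2^2+\norm{\oper P_{\widehat\Gamma^c}(b^t-w^\star)}_2^2$, and finally optimizes over $\gamma$ (the optimum is $\gamma=1/\alpha_{4k}$). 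Your approach is more elementary: it uses only co-coercivity (Corollary~\ref{cor::consequences of RSS}) in square-root form and a careful resolution of the resulting pair of quadratic inequalities, bypassing Corollary~\ref{cor::StoIHT 1st corollary} and the auxiliary $\gamma$ entirely. In fact, if you square $2\sqrt{\rho^-_{4k}}\,x-p\sqrt{\alpha_{4k}}\le\sqrt{p^2\alpha_{4k}+4xg}$ (with $x=\norm{b^t-w^\star}_2$, $p=\norm{\oper P_{\widehat\Gamma^c}(b^t-w^\star)}_2$, $g=\norm{\oper P_{\widehat\Gamma}\nabla F(w^\star)}_2$) rather than crudely estimating the radical, the cross terms cancel and you obtain $x\le\sqrt{\alpha_{4k}/\rho^-_{4k}}\,p+g/\rho^-_{4k}$, i.e.\ constant $1$ rather than $3$ on the noise term --- a strictly sharper version of the lemma. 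The paper's route has the virtue of reusing the same contraction machinery as the StoIHT analysis; yours is shorter and makes the appearance of $\alpha_{4k}$ (via the importance weights in the Cauchy--Schwarz/Jensen step over $i$) completely transparent.
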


\begin{lem}
\label{lem::bound L2 P_Gamma^c b^t-w*}
Denote $\widehat{\Gamma}$ as the set obtained from the $t$-th iteration. Then,
\begin{equation}
\begin{split}
\E_{i_t} \norm{\oper P_{\widehat{\Gamma}^c} (b^t - w^{\star})}_2  &\leq \left( \max_i \sqrt{Mp(i)} \sqrt{\frac{\frac{2\eta_1^2-1}{\eta^2_1}\rho^+_{4k} - \rho^-_{4k}}{\rho^-_{4k}}}  + \frac{\sqrt{\eta^2_1-1}}{\eta_1} \right) \norm{w^t-w^{\star}}_2 + \sigma_2,
\end{split}
\end{equation}
where
$$
\sigma_2 \triangleq \frac{2 \max_{i \in [M]} p(i)}{\rho^-_{4k} \min_{i \in [M]} p(i)} \max_{|\Omega| \leq 4k, i \in [M]}\norm{\oper P_{\Omega} \nabla f_{i} (w^{\star})}_2.
$$
\end{lem}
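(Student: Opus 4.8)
The plan is to carry out the ``identification'' analysis familiar from CoSaMP and GradMP, now inside the atomic set $\oper D$ and with the extra randomness of the index $i_t$; throughout I would condition on the indices drawn before step $t$, so that $w^t$ is a fixed sparse vector and the only randomness is $i_t$, which determines $r^t=\nabla f_{i_t}(w^t)$, the sets $\Gamma$ and $\widehat{\Gamma}$, and $b^t$. The first move is to reduce the left-hand side. Put $R^\star\triangleq\supp_{\oper D}(w^\star)\setminus\widehat{\Gamma}$; then $|R^\star|\le k$, and $R^\star\cap\Gamma=\varnothing$ since $\Gamma\subseteq\widehat{\Gamma}$. Because $b^t$ is supported on $\widehat{\Gamma}$ (it minimizes $F$ over $\text{span}(D_{\widehat{\Gamma}})$) and $\supp_{\oper D}(w^t)\subseteq\Lambda\subseteq\widehat{\Gamma}$, only the atoms of $w^\star$ lying in $R^\star$ survive the projection onto $\widehat{\Gamma}^c$; fixing $\Omega\supseteq\widehat{\Gamma}\cup\supp_{\oper D}(w^\star)$ with $|\Omega|\le 4k$ and working inside $\oper R(D_\Omega)$,
$$
\norm{\oper P_{\widehat{\Gamma}^c}(b^t-w^\star)}_2=\norm{\oper P_{R^\star}(w^\star-w^t)}_2 .
$$

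Second, I would exploit that $R^\star$ was \emph{missed} by the identify step. Since $\Gamma=\text{approx}_{2k}(r^t,\eta_1)$ and $|\supp_{\oper D}(w^\star)\cup\supp_{\oper D}(w^t)|\le 2k$, pairing the at most $|R^\star|$ missed atoms with an equally large subset of $\Gamma$ lying outside $\supp_{\oper D}(w^\star)\cup\supp_{\oper D}(w^t)$, and then invoking the approximation inequalities (\ref{eqt::approximation consequence}) and (\ref{eqt::approximation consequence 2}) applied to $w=r^t$, bounds $\norm{\oper P_{R^\star}r^t}_2$ by the mass of $\oper P_\Omega r^t$ on a set of size $\le 4k$ disjoint from both supports, producing the factors $\tfrac{2\eta_1^2-1}{\eta_1^2}$ (which will land inside a square root) and $\tfrac{\sqrt{\eta_1^2-1}}{\eta_1}$ (additive). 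On any such disjoint set I split $r^t=\bigl(\nabla f_{i_t}(w^t)-\nabla f_{i_t}(w^\star)\bigr)+\nabla f_{i_t}(w^\star)$; the second summand, after maximizing over $i\in[M]$ and over $|\Omega|\le 4k$ and dividing by $\rho^-_{4k}$, contributes exactly $\sigma_2$, the prefactor $\tfrac{\max_i p(i)}{\rho^-_{4k}\min_i p(i)}$ arising because the algorithm identifies on the unnormalized $\nabla f_{i_t}$ while the analysis works with the unbiased $\tfrac{1}{Mp(i_t)}\nabla f_{i_t}$.

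Third, and this is the crux, I would bound the ``signal'' contribution $\oper P_{(\cdot)}\bigl(\nabla f_{i_t}(w^t)-\nabla f_{i_t}(w^\star)\bigr)$ by a multiple of $\norm{w^t-w^\star}_2$; a naive $\rho^+_{4k}(i_t)\norm{w^t-w^\star}_2$ bound is too lossy, yielding a $\rho^+/\rho^-$ rather than the claimed $\sqrt{(\rho^+-\rho^-)/\rho^-}$-type dependence. Instead I would form $\norm{(w^t-w^\star)-\tfrac{1}{Mp(i_t)}\oper P_\Omega\bigl(\nabla f_{i_t}(w^t)-\nabla f_{i_t}(w^\star)\bigr)}_2$ (legitimate since $w^t-w^\star$ vanishes on the atoms in question), take the conditional expectation $\E_{i_t}$, and apply Corollary~\ref{cor::StoIHT 1st corollary}, whose estimate (\ref{inq::1st key observation}) already fuses the co-coercivity consequence (\ref{inq::coercivity}) of $\oper D$-RSS with the $\oper D$-RSC bound (\ref{inq::37}); using (\ref{inq::37}) once more to divide by $\rho^-_{4k}$, Jensen's inequality $(\E Z)^2\le\E Z^2$, and the identity $\alpha_{4k}=\max_i\rho^+_{4k}(i)/(Mp(i))$, one assembles the coefficient $\max_i\sqrt{Mp(i)}\,\sqrt{\bigl(\tfrac{2\eta_1^2-1}{\eta_1^2}\rho^+_{4k}-\rho^-_{4k}\bigr)/\rho^-_{4k}}+\tfrac{\sqrt{\eta_1^2-1}}{\eta_1}$ multiplying $\norm{w^t-w^\star}_2$, the $\max_i\sqrt{Mp(i)}$ again accounting for the missing normalization. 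Combining the three steps and taking $\E_{i_t}$ throughout gives the lemma.

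I expect the main obstacle to be this third step: one must choose the scaling inside the norm and the comparison sets so that the co-coercivity bound and the $\oper D$-RSC lower bound combine into the \emph{sharp} $\sqrt{(\rho^+-\rho^-)/\rho^-}$-type factor rather than a lossy $\rho^+/\rho^-$, correctly commute $\E_{i_t}$ past the $\text{approx}_{2k}$ operator (which is deterministic only once $i_t$ has been drawn), and keep careful track of every factor of $Mp(i)$ and of $\eta_1$, which enters the final bound both multiplicatively, through $\tfrac{2\eta_1^2-1}{\eta_1^2}$ inside the square root, and additively, through $\tfrac{\sqrt{\eta_1^2-1}}{\eta_1}$. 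The combinatorial matching argument and the subspace conventions used in the first two steps are routine once set up with care.
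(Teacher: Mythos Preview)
Your outline has a genuine gap at the junction of steps 1--3. Step~1 correctly reduces the left-hand side to $\norm{\oper P_{R^\star}(w^\star-w^t)}_2$ with $R^\star\subseteq\Gamma^c$ (the paper does the same, arriving at $\norm{\Delta-\oper P_\Gamma\Delta}_2$ with $\Delta=w^\star-w^t$). But from there you switch to bounding $\norm{\oper P_{R^\star} r^t}_2$ and then the projected gradient difference $\oper P_{(\cdot)}(\nabla f_{i_t}(w^t)-\nabla f_{i_t}(w^\star))$ on sets disjoint from $\supp_{\oper D}(\Delta)$. Nowhere do you explain how controlling these \emph{gradient} quantities yields a bound on $\norm{\oper P_{R^\star}\Delta}_2$, which is an \emph{error} quantity. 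In the quadratic CoSaMP setting the proxy satisfies $r^t\approx A^*A\Delta$, so the two are directly comparable by RIP; in the general $\oper D$-RSC/$\oper D$-RSS setting no such identification is available, and the step ``$\norm{\oper P_{R^\star}\Delta}_2$ is controlled once $\norm{\oper P_{R^\star} r^t}_2$ is'' is exactly what needs an argument. Your appeal to Corollary~\ref{cor::StoIHT 1st corollary} only bounds $\E_{i_t}\norm{\Delta-\tfrac{\gamma}{Mp(i_t)}\oper P_\Omega(\nabla f_{i_t}(w^t)-\nabla f_{i_t}(w^\star))}_2$; even if you add and subtract this inside $\oper P_{R^\star}$, one of the resulting terms is the gradient difference projected onto $R^\star\subseteq\supp_{\oper D}(\Delta)$, \emph{not} onto a disjoint set, so your ``legitimate since $w^t-w^\star$ vanishes'' justification does not apply to it. It is also unclear how your route would manufacture the specific factor $\tfrac{2\eta_1^2-1}{\eta_1^2}$ inside the square root or the $\max_i\sqrt{Mp(i)}$ prefactor.

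The paper avoids this obstacle by a functional-inequality argument rather than a projection comparison. It starts from $\oper D$-RSC of $F$ at $(w^\star,w^t)$, writes $\inner{\nabla F(w^t),\Delta}=\E_{i_t}\inner{\tfrac{1}{Mp(i_t)}\oper P_R\nabla f_{i_t}(w^t),\Delta}$ with $R=\supp_{\oper D}(\Delta)$, and uses~(\ref{eqt::approximation consequence 2}) to replace $\oper P_R$ by $\oper P_\Gamma$ plus an $\eta_1$-correction. It then introduces an auxiliary vector $y=z-\tfrac{\sqrt{\eta_1^2-1}}{\eta_1}x$ (with $z,x$ unit-direction scalings of $\oper P_\Gamma\nabla f_{i_t}(w^t)$ and $\oper P_{\Gamma^c}\nabla f_{i_t}(w^t)$ of length $\norm{\Delta}_2$), applies the $\oper D$-RSS descent inequality $\inner{\nabla f_{i_t}(w^t),y}\ge f_{i_t}(w^t+y)-f_{i_t}(w^t)-\tfrac{\rho^+_{4k}(i_t)}{2}\norm{y}_2^2$, and then $\oper D$-RSC once more to $f_{i_t}(w^t+y)-f_{i_t}(w^\star)$. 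This produces a quadratic inequality in $\E_{i_t}\norm{\Delta-y}_2$; solving it and using $\norm{\Delta-y}_2\ge\norm{\Delta-\oper P_\Gamma\Delta}_2-\tfrac{\sqrt{\eta_1^2-1}}{\eta_1}\norm{\Delta}_2$ gives the claim. The constant $\tfrac{2\eta_1^2-1}{\eta_1^2}$ is exactly $\norm{y}_2^2/\norm{\Delta}_2^2$, and $\max_i\sqrt{Mp(i)}$ comes from the ratio of the leading and quadratic coefficients in that inequality. None of this uses Corollary~\ref{cor::StoIHT 1st corollary}.
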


We are now able to prove Theorem \ref{thm::StoGradMP}. We have a series of inequalities that follow from the above lemmas, 
\begin{equation}
\begin{split}
&\E_{I_t} \norm{w^{t+1} - w^{\star}}_2 \\
&\leq (1+\eta_2) \E_{I_t} \norm{b^t-w^{\star}}_2  \quad\quad \text{(Lemma \ref{lem::bound l2 w^(t+1)-w*})}\\
&\leq (1+\eta_2) \sqrt{\frac{\alpha_{4k}}{\rho^-_{4k}}} \E_{I_t} \norm{P_{\widehat{\Gamma}^c} (b^t - w^{\star})}_2 + (1+\eta_2) \sigma_1  \quad\quad \text{(Lemma \ref{lem::bound l2 b^t - w*}) }\\
&\leq (1+\eta_2) \sqrt{\frac{\alpha_{4k}}{\rho^-_{4k}}} \left( \max_i \sqrt{Mp(i)} \sqrt{\frac{ \frac{2\eta^2_1-1}{\eta^2_1}\rho^+_{4k} - \rho^-_{4k}}{\rho^-_{4k}}} + \frac{\sqrt{\eta^2_1-1}}{\eta_1} \right)  \E_{I_{t-1}} \norm{w^t-w^{\star}}_2 \\
&+ (1+\eta_2)\left( \sqrt{\frac{\alpha_{4k}}{\rho^-_{4k}}} \sigma_2 + \sigma_1 \right),
\end{split}
\end{equation}
where the last inequality follows from Lemma \ref{lem::bound L2 P_Gamma^c b^t-w*}. Replacing the definition of $\kappa$ in (\ref{eqt::kappa of StoGradMP}) and noticing that $\sigma_{w^{\star}}$ defined in (\ref{eqt::sigma of StoGradMP}) is greater than the second term of the last equation (it is due to $\max_{|\Omega| \leq 4k, i \in [M]}\norm{\oper P_{\Omega} \nabla f_{i} (w^{\star})}_2 \geq \max_{|\Omega| \leq 3k, i \in [M]}\norm{\oper P_{\Omega} \nabla f_{i} (w^{\star})}_2$), we arrive at
\begin{equation}
\E_{I_t} \norm{w^{t+1} - w^{\star}}_2 \leq \kappa \E_{I_{t-1}}\norm{w^t-w^{\star}}_2 + \sigma_{w^\star}.
\end{equation}
Applying this inequality recursively $t$ times will complete the proof.

\vspace{3mm}
\noindent To the end of this section, we prove three lemmas stated above.

\begin{proof} [Proof of Lemma \ref{lem::bound l2 w^(t+1)-w*}]
Recall that $b^t$ is the vector obtained from the $t$-th iteration. From the algorithm, we have
$$
\norm{w^{t+1} - b^t}_2 \leq \eta_2 \norm{b^t_k - b^t}_2 \leq \eta_2 \norm{w^{\star} - b^t}_2
$$
where $b^t_k$ is the best $k$-sparse approximation of $b^t$ with respect to the set $\oper D$. We thus have
\begin{equation}
\begin{split}
\norm{w^{t+1} - w^{\star}}_2 &\leq \norm{(w^{t+1} - b^t) + (b^t -w^{\star})}_2  \\
&\leq \norm{w^{t+1} - b^t}_2 + \norm{b^t - w^{\star}}_2 \leq (1+\eta_2) \norm{b^t - w^{\star}}_2.
\end{split}
\end{equation}
\end{proof}

\begin{proof} [Proof of Lemma \ref{lem::bound l2 b^t - w*}]
Denote the set $\oper C_{\widehat{\Gamma}} \triangleq \{ w: w = \sum_{j \in \widehat{\Gamma}} \alpha_j d_j \}$. It is clear that $\oper C_{\widehat{\Gamma}}$ is a convex set, so the estimation step can be written as
$$
b^t = \argmin_w F(w)  \quad\text{such that} \quad w \in \oper C_{\widehat{\Gamma}}.
$$

\noindent Optimization theory states that (Proposition 4.7.1 of \cite{DNO_optimization_2003_B}) 
\begin{equation}
\nonumber
\inner{\nabla F(b^t), b^t - z} \leq 0 \quad \text{for all } z \in \oper C_{\widehat{\Gamma}}.
\end{equation}

\noindent Put differently, we have
\begin{equation}
\nonumber
\inner{\nabla F(b^t), \oper P_{\widehat{\Gamma}}(b^t - z)} \leq 0 \quad \text{for all } z.
\end{equation}
Denote by $i$ an index selected randomly from $[M]$ with probability $p(i)$ and independent from  all the random indices $i_t$ and recall that $\nabla F(b^t) = \E_i \frac{1}{Mp(i)} \nabla f_i (b^t)$. The above inequality can be read as
\begin{equation}
\label{inq::optimization condition}
0 \geq \inner{\E_i \frac{1}{Mp(i)} \nabla f_i (b^t), \oper P_{\widehat{\Gamma}}(b^t - z)} = \E_i \inner{\frac{1}{Mp(i)} \oper P_{\widehat{\Gamma}} \nabla f_i (b^t), \oper P_{\widehat{\Gamma}} (b^t - z)} \quad \text{for all } z.
\end{equation}

\noindent We first derive the upper bound of $\norm{\oper P_{\widehat{\Gamma}}(b^t - w^{\star})}_2$. For any $\gamma > 0$, we have
\begin{equation}
\begin{split}
\nonumber
\norm{\oper P_{\widehat{\Gamma}}(b^t - w^{\star})}_2^2 &=  \inner{\oper P_{\widehat{\Gamma}}(b^t - w^{\star}), b^t - w^{\star} }\\
&= \inner{\oper P_{\widehat{\Gamma}} (b^t - w^{\star}), b^t - w^{\star} - \E_i \frac{\gamma}{Mp(i)}\oper P_{\widehat{\Gamma}}(\nabla f_i(b^t) - \nabla f_i(w^{\star}))} \\
&\quad+ \inner{\oper P_{\widehat{\Gamma}} (b^t - w^{\star}), \E_i \frac{\gamma}{Mp(i)} \oper P_{\widehat{\Gamma}} \nabla f_i(b^t)} -  \inner{\oper P_{\widehat{\Gamma}}(b^t -  w^{\star}), \E_i \frac{\gamma}{Mp(i)}\oper P_{\widehat{\Gamma}} \nabla f_i(w^{\star}))} \\
&= \E_i \inner{\oper P_{\widehat{\Gamma}} (b^t - w^{\star}), b^t - w^{\star} - \frac{\gamma}{Mp(i)}\oper P_{\widehat{\Gamma}}(\nabla f_i(b^t) - \nabla f_i(w^{\star}))} \\
&\quad+ \E_i \inner{\oper P_{\widehat{\Gamma}} (b^t - w^{\star}),  \frac{\gamma}{Mp(i)} \oper P_{\widehat{\Gamma}} \nabla f_i(b^t)} -  \E_i \inner{\oper P_{\widehat{\Gamma}}(b^t -  w^{\star}), \frac{\gamma}{Mp(i)}\oper P_{\widehat{\Gamma}} \nabla f_i(w^{\star}))} \\
&\leq \norm{\oper P_{\widehat{\Gamma}} (b^t - w^{\star})}_2 \E_i \norm{b^t - w^{\star} - \frac{\gamma}{Mp(i)} \oper P_{\widehat{\Gamma}}(\nabla f_i(b^t) - \nabla f_i(w^{\star}))}_2 \\
&\quad+ \norm{\oper P_{\widehat{\Gamma}}(b^t -  w^{\star})}_2 \E_i \frac{\gamma}{Mp(i)} \norm{\oper P_{\widehat{\Gamma}} \nabla f_i(w^{\star}))}_2
\end{split}
\end{equation} 
where the inequality follows from (\ref{inq::optimization condition}) and the Cauchy-Schwarz inequality. Canceling the common term in both sides, we derive
\begin{equation}
\nonumber
\norm{\oper P_{\widehat{\Gamma}}(b^t - w^{\star})}_2 \leq \E_i \norm{b^t - w^{\star} - \frac{\gamma}{Mp(i)} \oper P_{\widehat{\Gamma}}(\nabla f_i(b^t) - \nabla f_i(w^{\star}))}_2 + \E_i \frac{\gamma}{Mp(i)} \norm{\oper P_{\widehat{\Gamma}} \nabla f_i(w^{\star}))}_2.
\end{equation}

\noindent We bound the first term of the right-hand side. For a fixed realization of the random vector $b^t$, we apply Corollary \ref{cor::StoIHT 1st corollary} to obtain
\begin{equation}
\begin{split}
\E_{i} \norm{b^t - w^{\star} - \frac{\gamma}{Mp(i)} \oper P_{\widehat{\Gamma}}(\nabla f_i(b^t) - \nabla f_i(w^{\star}))}_2 \leq \sqrt{\left(1- (2\gamma - \gamma^2 \alpha_{4k}) \rho^-_{4k} \right)} \norm{b^t-w^{\star}}_2 .
\end{split}
\end{equation}

\noindent Applying this result to the above inequality and taking the expectation with respect to $i_t$ yields
\begin{equation}
\begin{split}
\norm{\oper P_{\widehat{\Gamma}}(b^t - w^{\star})}_2  \leq \sqrt{\left(1- (2\gamma - \gamma^2 \alpha_{4k}) \rho^-_{4k} \right)} \norm{b^t-w^{\star}}_2 + \frac{\gamma}{\min_i Mp(i)} \E_i \norm{\oper P_{\widehat{\Gamma}} \nabla f_i(w^{\star}))}_2,
\end{split}
\end{equation}

\noindent We now apply this inequality to get
\begin{equation}
\begin{split}
\norm{b^t - w^{\star}}_2^2 &= \norm{\oper P_{\widehat{\Gamma}}(b^t - w^{\star})}_2^2 + \norm{P_{\widehat{\Gamma}^c} (b^t - w^{\star}) }_2^2 \\
&\leq  \left( \sqrt{\left(1- (2\gamma - \gamma^2 \alpha_{4k}) \rho^-_{4k} \right)} \norm{b^t-w^{\star}}_2 + \frac{\gamma}{\min_i Mp(i)} \E_i \norm{\oper P_{\widehat{\Gamma}} \nabla f_i(w^{\star}))}_2 \right)^2 \\
&\quad+ \norm{\oper P_{\widehat{\Gamma}^c} (b^t - w^{\star})}_2^2.
\end{split}
\end{equation}

\noindent Solving the quadratic polynomial $a x^2 - 2bx - c \leq 0$ with $x = \norm{b^t - w^{\star}}_2$, 
$a = (2\gamma - \gamma^2 \alpha_{4k})\rho^-_{4k}$, $b = \sqrt{\left(1- (2\gamma - \gamma^2 \alpha_{4k}) \rho^-_{4k} \right)}  \frac{\gamma}{\min_i Mp(i)} \E_i \norm{\oper P_{\widehat{\Gamma}} \nabla f_i(w^{\star}))}_2$, and $c = (\frac{\gamma}{\min_i Mp(i)} \E_i \norm{\oper P_{\widehat{\Gamma}} \nabla f_i(w^{\star}))}_2)^2 + \norm{\oper P_{\widehat{\Gamma}^c} (b^t - w^{\star})}_2^2$, we get
$$
\norm{b^t - w^{\star}}_2 \leq \frac{b + \sqrt{b^2+ac}}{a} \leq \sqrt{\frac{c}{a}} + \frac{2b}{a}.
$$
Replacing these quantities $a$, $b$, and $c$ yields 
\begin{equation}
\begin{split}
\nonumber
\norm{b^t - w^{\star}}_2 &\leq \frac{1}{\sqrt{ (2\gamma - \gamma^2 \alpha_{4k}) \rho^-_{4k}}}  \norm{P_{\widehat{\Gamma}^c} (b^t - w^{\star})}_2 \\
&\quad+ \left( \frac{1}{\sqrt{ (2\gamma - \gamma^2 \alpha_{4k}) \rho^-_{4k}}} + \frac{2\sqrt{\left(1- (2\gamma - \gamma^2 \alpha_{4k}) \rho^-_{4k} \right)}}{ (2\gamma - \gamma^2 \alpha_{4k}) \rho^-_{4k}}\right)  \frac{\gamma}{\min_i Mp(i)} \E_i \norm{\oper P_{\widehat{\Gamma}} \nabla f_i(w^{\star}))}_2 \\
&\leq \frac{1}{\sqrt{ (2\gamma - \gamma^2 \alpha_{4k}) \rho^-_{4k}}}  \norm{P_{\widehat{\Gamma}^c} (b^t - w^{\star})}_2 + \frac{3}{ (2\gamma - \gamma^2 \alpha_{4k}) \rho^-_{4k}}  \frac{\gamma}{\min_i Mp(i)} \E_i \norm{\oper P_{\widehat{\Gamma}} \nabla f_i(w^{\star}))}_2.
\end{split}
\end{equation}
Optimizing $\gamma$ that maximizes ($2\gamma - \gamma^2 \alpha_{4k}$), we get $\gamma = \frac{1}{\alpha_{4k}}$. Plugging this value into the above inequality and taking the expectation with respect to $I_t$ (notice that the random variable $b^t$ is determined by random indices $i_0,...,i_t$)
\begin{equation}
\begin{split}
\nonumber
\E_{I_t} \norm{b^t - w^{\star}}_2 &\leq \sqrt{\frac{\alpha_{4k}}{\rho^-_{4k}}} \E_{I_t} \norm{P_{\widehat{\Gamma}^c} (b^t - w^{\star})}_2 + \frac{3}{\rho^-_{4k}} \frac{1}{\min_i Mp(i)} \E_{i, I_t} \norm{\oper P_{\widehat{\Gamma}} \nabla f_i(w^{\star}))}_2.
\end{split}
\end{equation}
The proof is completed. 

\end{proof}

\begin{proof} [Proof of Lemma \ref{lem::bound L2 P_Gamma^c b^t-w*}]
Since $b^t$ and $w^t$ are in $\text{span}({\oper D_{\widehat{\Gamma}}})$, we have $\oper P_{\widehat{\Gamma}^c} b^t = 0$ and $\oper P_{\widehat{\Gamma}^c} w^t = 0$. Therefore,
\begin{equation}
\label{inq::1st intemediate result}
\norm{\oper P_{\widehat{\Gamma}^c} (b^t - w^{\star})}_2 = \norm{\oper P_{\widehat{\Gamma}^c} (w^t - w^{\star})}_2 \leq \norm{\oper P_{{\Gamma}^c} (w^t - w^{\star})}_2 = \norm{\Delta - \oper P_{\Gamma} \Delta}_2,
\end{equation}
where we denote $\Delta \triangleq w^{\star} - w^t$. The goal is to estimate $\norm{\Delta - \oper P_{\Gamma} \Delta}_2$. Let $R \triangleq \supp_{\oper D} (\Delta)$ and apply the $\oper D$-RSC, we have
\begin{equation}
\begin{split}
\label{inq::proof Lemma 3-1st inequality}
F(w^{\star}) - F(w^t) - \frac{\rho^-_{4k}}{2} \norm{w^{\star} - w^t}_2^2 &\geq \inner{\nabla F(w^t),w^{\star} - w^t} \\
&= \E_{i_t} \inner{\frac{1}{Mp(i_t)} \nabla f_{i_t}(w^t),\Delta} \\
&= \E_{i_t} \inner{\frac{1}{Mp(i_t)}  \oper P_R \nabla f_{i_t}(w^t),\Delta} \\
&\geq - \E_{i_t} \norm{\frac{1}{Mp(i_t)}  \oper P_R \nabla f_{i_t}(w^t)}_2 \norm{\Delta}_2.
\end{split} 
\end{equation}
The right-hand side can be lower bounded by applying inequality (\ref{eqt::approximation consequence 2}), which yields $\norm{\oper P_R \nabla f_{i_t}(w^t)}_2 \leq \norm{\oper P_{\Gamma} \nabla f_{i_t}(w^t)}_2 + \frac{\sqrt{\eta^2_1-1}}{\eta_1} \norm{\oper P_{\Gamma^c} \nabla f_{i_t}(w^t)}_2$. We now apply this observation to the above inequality. Denote $z \triangleq -\frac{\oper P_{\Gamma} \nabla f_{i_t}(w^t)}{\norm{\oper P_{\Gamma} \nabla f_{i_t}(w^t)}_2} \norm{\Delta}_2$ and $x \triangleq \frac{\oper P_{\Gamma^c} \nabla f_{i_t}(w^t)}{\norm{\oper P_{\Gamma^c} \nabla f_{i_t}(w^t)}_2} \norm{\Delta}_2$, we have
\begin{equation}
\begin{split}
\label{inq::proof Lemma 3-2nd inequality}
&- \E_{i_t} \norm{\frac{1}{Mp(i_t)}  \oper P_R \nabla f_{i_t}(w^t)}_2 \norm{\Delta}_2 \\
&\geq -\E_{i_t} \norm{\frac{1}{Mp(i_t)}  \oper P_{\Gamma} \nabla f_{i_t}(w^t)}_2 \norm{\Delta}_2 -  \frac{\sqrt{\eta^2_1-1}}{\eta_1} \E_{i_t} \norm{\frac{1}{Mp(i_t)} \oper P_{\Gamma^c} \nabla f_{i_t}(w^t)}_2 \norm{\Delta}_2 \\
&=  \E_{i_t} \inner{\frac{1}{Mp(i_t)}  \oper P_{\Gamma} \nabla f_{i_t}(w^t), z} - \frac{\sqrt{\eta^2_1-1}}{\eta_1} \E_{i_t} \inner{\frac{1}{Mp(i_t)} \oper P_{\Gamma^c} \nabla f_{i_t}(w^t), x} \\
&=  \E_{i_t} \inner{\frac{1}{Mp(i_t)}  \nabla f_{i_t}(w^t), z} - \E_{i_t} \inner{\frac{1}{Mp(i_t)} \nabla f_{i_t}(w^t), \frac{\sqrt{\eta^2_1-1}}{\eta_1} x} \\
&= \E_{i_t} \inner{\frac{1}{Mp(i_t)} \nabla f_{i_t}(w^t), z - \frac{\sqrt{\eta^2_1-1}}{\eta_1} x},
\end{split} 
\end{equation}
where the second equality follows from $\supp_{\oper D}(z) = \Gamma$ and $\inner{\oper P_{\Gamma}r ,z} = \inner{r,\oper P_{\Gamma}z} = \inner{r,z}$. Denote $y \triangleq z - \frac{\sqrt{\eta^2_1-1}}{\eta_1} x$ and combine (\ref{inq::proof Lemma 3-1st inequality}) and (\ref{inq::proof Lemma 3-2nd inequality}) to arrive at
\begin{equation}
\label{inq::proof Lemma 3-3rd inequality}
F(w^{\star}) - F(w^t) - \frac{\rho^-_{4k}}{2} \norm{\Delta}_2^2 \geq \E_{i_t} \inner{\frac{1}{Mp(i_t)} \nabla f_{i_t}(w^t), y}.
\end{equation}

\noindent We now use the $\oper D$-RSS property to lower bound the right-hand side of the above inequality. Recall that from the definition of $\oper D$-RSS, we can show that
\begin{equation}
\begin{split}
\nonumber
\inner{\nabla f_{i_t}(w^t), y} &\geq f_{i_t}(w^t+ y) - f_{i_t}(w^t) - \frac{\rho^+_{4k}(i_t)}{2} \norm{y}_2^2.
\end{split}
\end{equation}
Multiply both sides with $\frac{1}{Mp(i_t)}$ and take the expectation  with respect to the index $i_t$ and recall that $\E_{i_t} \frac{1}{Mp(i_t)} f_{i_t} (w^t) = F(w^t)$, we have
$$
\E_{i_t} \inner{\frac{1}{Mp(i_t)}  \nabla f_{i_t}(w^t), y} \geq \E_{i_t} \frac{1}{Mp(i_t)}  f_{i_t }(w^t + y) - F(w^t) -  \frac{1}{2} \E_{i_t} \frac{\rho^+_{4k}(i_t)}{Mp(i_t)} \norm{y}_2^2.
$$

\noindent Combining with inequality (\ref{inq::proof Lemma 3-3rd inequality}) and removing the common terms yields
\begin{equation}
\begin{split}
\nonumber
\frac{1}{2} \E_{i_t} \frac{\rho^+_{4k}(i_t)}{Mp(i_t)} \norm{y}_2^2 - \frac{\rho^-_{4k}}{2} \norm{\Delta}_2^2 &\geq \E_{i_t} \frac{1}{Mp(i_t)}  f_{i_t }(w^t+ y) - F(w^{\star}) \\
&= \E_{i_t} \frac{1}{Mp(i_t)} \left( f_{i_t }(w^t+ y) - f_{i_t }(w^{\star}) \right) 
\end{split}
\end{equation}
where the equality follows from $F(w^{\star}) = \E_{i_t} \frac{1}{Mp(i_t)} f_{i_t }(w^{\star})$. Applying the $\oper D$-RSC one more time to the right-hand side and then taking the expectation, we get
\begin{equation}
\label{inq::proof Lemma 3-quadratic inequality}
\begin{split}
&\frac{1}{2} \E_{i_t} \frac{\rho^+_{4k}(i_t)}{Mp(i_t)} \norm{y}_2^2 - \frac{\rho^-_{4k}}{2} \norm{\Delta}_2^2 \\
&\geq \frac{\rho^-_{4k}}{2} \E_{i_t} \frac{1}{Mp(i_t)} \norm{w^t+ y-w^{\star}}_2^2 + \E_{i_t} \inner{\frac{1}{Mp(i_t)} \nabla f_{i_t}(w^{\star}), w^t+ y-w^{\star}} \\ 
&= \frac{\rho^-_{4k}}{2} \E_{i_t} \frac{1}{Mp(i_t)} \norm{\Delta - y}_2^2 + \E_{i_t} \inner{\frac{1}{Mp(i_t)} \oper P_{\Gamma \cup R} \nabla f_{i_t}(w^{\star}), y-\Delta}  \\
&\geq \frac{\rho^-_{4k}}{2} \E_{i_t} \frac{1}{Mp(i_t)} \norm{\Delta - y}_2^2 - \E_{i_t} \frac{1}{Mp(i_t)} \norm{\oper P_{\Gamma \cup R} \nabla f_{i_t} (w^{\star})}_2 \norm{\Delta - y}_2 \\
&\geq \frac{\rho^-_{4k}}{2\max_{i_t} Mp(i_t)} \E_{i_t} \norm{\Delta - y}_2^2 - \frac{\max_{i_t} \norm{\oper P_{\Gamma \cup R} \nabla f_{i_t} (w^{\star})}_2}{\min_{i_t} Mp(i_t)} \E_{i_t}\norm{\Delta - y}_2 \\
&\geq \frac{\rho^-_{4k}}{2\max_{i_t} Mp(i_t)} \left(\E_{i_t} \norm{\Delta - y}_2 \right)^2 - \frac{\max_{i_t} \norm{\oper P_{\Gamma \cup R} \nabla f_{i_t} (w^{\star})}_2}{\min_{i_t} Mp(i_t)} \E_{i_t}\norm{\Delta - y}_2 .
\end{split}
\end{equation}

\noindent Solving the quadratic inequality $au^2 - 2bu -c\leq 0$ with	 $u = \E_{i_t} \norm{\Delta - y}_2$, $a = \frac{\rho^-_{4k}}{\max_{i_t} Mp(i_t)}$, $b = \frac{\max_{i_t} \norm{\oper P_{\Gamma \cup R} \nabla f_{i_t} (w^{\star})}_2}{\min_{i_t} Mp(i_t)}$, and $c = \E_{i_t} \frac{\rho^+_{4k}(i_t) }{Mp(i_t)} \norm{y}_2^2 - \rho^-_{4k} \norm{\Delta}_2^2$, we obtain 
\begin{equation}
\label{inq::proof Lemma 3-4th inequality}
\E_{i_t} \norm{\Delta - y}_2 \leq \sqrt{\frac{c}{a}} + \frac{2b}{a}.
\end{equation}

\noindent Now plugging the definition of $y$ we can obtain the lower bound of the left-hand side. We have,
\begin{equation}
\begin{split}
\nonumber
\norm{\Delta - y }_2 &= \norm{\Delta - z + \frac{\sqrt{\eta^2_1-1}}{\eta_1} x}_2 \\
&\geq \norm{\Delta - z}_2 - \norm{\frac{\sqrt{\eta^2_1-1}}{\eta_1} x}_2 \\
&= \norm{\Delta + \frac{\norm{\Delta}_2}{\norm{\oper P_{\Gamma} \nabla f_{i_t}(w^t)}_2} \oper P_{\Gamma} \nabla f_{i_t}(w^t) }_2 - \norm{\frac{\sqrt{\eta^2_1-1}}{\eta_1}  \frac{\nabla f_{i_t}(w^t)}{\norm{\nabla f_{i_t}(w^t)}_2} \norm{\Delta}_2}_2 \\
&\geq \norm{\Delta - \oper P_{\Gamma} \Delta}_2 - \frac{\sqrt{\eta^2_1-1}}{\eta_1} \norm{\Delta}_2,
\end{split}
\end{equation}
where the first inequality follows from the triangular argument; the last inequality follows from the observation that for any vector $v$, $\norm{\Delta - \oper P_{\Gamma} v }_2 \geq \norm{\Delta - \oper P_{\Gamma} \Delta}_2$. Here, $v = -\frac{\norm{\Delta}_2}{ \norm{\oper P_{\Gamma} \nabla f_{i_t}(w^t)}_2} \nabla f_{i_t}(w^t)$. Therefore,
\begin{equation}
\nonumber
\E_{i_t} \norm{\Delta - y }_2 \geq \E_{i_t} \norm{\Delta - \oper P_{\Gamma} \Delta}_2 - \frac{\sqrt{\eta^2_1-1}}{\eta_1} \norm{\Delta}_2.
\end{equation}
Plugging this inequality into (\ref{inq::proof Lemma 3-4th inequality}), we get
\begin{equation}
\label{inq::proof Lemma 3-5th inequality}
\E_{i_t} \norm{\Delta - \oper P_{\Gamma} \Delta}_2 \leq  \sqrt{\frac{c}{a}} + \frac{2b}{a} + \frac{\sqrt{\eta^2_1-1}}{\eta_1} \norm{\Delta}_2.
\end{equation}

\noindent The last step is to substitute values of $a$, $b$, and $c$ defined above into this inequality. From the definition of $y$ together with the observation that $x$ is orthogonal with $z$, we have
\begin{equation}
\begin{split}
\nonumber
\norm{y}^2_2 &= \norm{z}^2_2 + \frac{\eta_1^2-1}{\eta^2_1} \norm{x}^2_2 \\
&= \norm{\frac{\oper P_{\Gamma} \nabla f_{i_t}(w^t)}{\norm{\oper P_{\Gamma} \nabla f_{i_t}(w^t)}_2} \norm{\Delta}_2}^2_2 + \frac{\eta_1^2-1}{\eta^2_1} \norm{\frac{\oper P_{\Gamma^c} \nabla f_{i_t}(w^t)}{\norm{\oper P_{\Gamma^c} \nabla f_{i_t}(w^t)}_2} \norm{\Delta}_2}^2_2 \\
&= \norm{\Delta}^2_2 + \frac{\eta_1^2-1}{\eta^2_1} \norm{\Delta}^2_2 = \frac{2\eta_1^2-1}{\eta^2_1} \norm{\Delta}^2_2.
\end{split}
\end{equation} 
Thus, the quantity $c$ defined above is bounded by 
\begin{equation}
\begin{split}
\nonumber
c &= \E_{i_t} \frac{\rho^+_{4k}(i_t)}{Mp(i_t)} \frac{2\eta_1^2-1}{\eta^2_1} \norm{\Delta}_2^2 - \rho^-_{4k} \norm{\Delta}_2^2 \\
&\leq \max_{i_t} \rho^+_{4k}(i_t) \E_{i_t} \frac{1}{Mp(i_t)} \frac{2\eta_1^2-1}{\eta^2_1} \norm{\Delta}_2^2 - \rho^-_{4k} \norm{\Delta}_2^2 \\
&= \left(\frac{2\eta_1^2-1}{\eta^2_1} \rho^+_{4k} - \rho^-_{4k} \right) \norm{\Delta}_2^2. 
\end{split}
\end{equation}

\noindent Now combine this inequality with (\ref{inq::proof Lemma 3-5th inequality}) and plug values of $a$ and $b$, we obtain
\begin{equation}
\begin{split}
\nonumber
\E_{i_t} \norm{\Delta - \oper P_{\Gamma} \Delta}_2 &\leq  \sqrt{\frac{c}{a}} + \frac{2b}{a} + \frac{\sqrt{\eta^2_1-1}}{\eta_1} \norm{\Delta}_2 \\
&\leq \max_{i_t} \sqrt{Mp(i_t)} \sqrt{\frac{\frac{2\eta_1^2-1}{\eta^2_1}\rho^+_{4k} - \rho^-_{4k}}{\rho^-_{4k}}} \norm{\Delta}_2 \\
&\quad+ \frac{2\max_{i_t} p(i_t)}{\rho^-_{4k} \min_{i_t} p(i_t)} \max_{|\Omega| \leq 4k, i_t \in [M]}\norm{\oper P_{\Omega} \nabla f_{i_t} (w^{\star})}_2 + \frac{\sqrt{\eta^2_1-1}}{\eta_1} \norm{\Delta}_2 \\
&= \left( \max_{i_t} \sqrt{Mp(i_t)} \sqrt{\frac{\frac{2\eta_1^2-1}{\eta^2_1} \rho^+_{4k} - \rho^-_{4k}}{\rho^-_{4k}}}  + \frac{\sqrt{\eta^2_1-1}}{\eta_1} \right) \norm{\Delta}_2 \\
&\quad+ \frac{2\max_{i_t} p(i_t)}{\rho^-_{4k} \min_{i_t} p(i_t)} \max_{|\Omega| \leq 4k, i_t \in [M]}\norm{\oper P_{\Omega} \nabla f_{i_t} (w^{\star})}_2
\end{split}
\end{equation}
The proof follows by combining this inequality with (\ref{inq::1st intemediate result}).
\end{proof}

\subsection{Proof of Theorem \ref{thm::StoIHT with inexact gradients}}
\label{subsection::proof of StoIHT with inexact gradients}
\begin{proof}
At the $t$-th iteration, denote $g_{i_t} (w) \triangleq \nabla f_{i_t}(w) + e^t$. The proof of Theorem \ref{thm::StoIHT with inexact gradients} is essentially the same as that of Theorem \ref{thm::StoIHT} with $\nabla f_{i_t} (w^t)$ and $\nabla f_{i_t} (w^{\star})$ replaced by $g_{i_t} (w^t)$ and $g_{i_t} (w^{\star})$, respectively. Following the same proof as in Theorem \ref{thm::StoIHT}, we arrive at
\begin{equation}
\begin{split}
\nonumber
&\E_{i_t | I_{t-1}} \norm{w^{t+1} - w^{\star}}_2 \\
&\leq 2\left( \E_{i_t | I_{t-1}} \norm{w^t - w^{\star} - \frac{\gamma}{Mp(i_t)} \oper P_{\Omega} \left( g_{i_t}(w^t) - g_{i_t}(w^{\star}) \right)}_2 + \E_{i_t | I_{t-1}} \norm{\frac{\gamma}{Mp(i_t)} \oper P_{\Omega} g_{i_t} (w^{\star})}_2 \right)  \\
&\quad+ \sqrt{\eta^2-1} \left( \E_{i_t | I_{t-1}} \norm{w^t - w^{\star} - \frac{\gamma}{Mp(i_t)} \left( g_{i_t} (w^t) - g_{i_t} (w^{\star}) \right)}_2 + \E_{i_t | I_{t-1}} \norm{\frac{\gamma}{Mp(i_t)} g_{i_t} (w^{\star})}_2 \right) .
\end{split}
\end{equation}
Notice that $g_{i_t}(w^t) - g_{i_t}(w^{\star}) = \nabla f_{i_t}(w^t) - \nabla f_{i_t}(w^{\star})$, we can apply the inequality (\ref{inq::1st key observation}) of Corollary \ref{cor::StoIHT 1st corollary} for the first term and (\ref{inq::2nd key observation}) for the third term of the summation to obtain
\begin{equation}
\begin{split}
\nonumber
&\E_{i_t | I_{t-1}} \norm{w^{t+1} - w^{\star}}_2 \\
&\leq 2 \sqrt{\left(1- (2\gamma - \gamma^2 \alpha_{3k}) \rho^-_{3k} \right)} \norm{w^t-w^{\star}}_2 + 2 \frac{\gamma}{\min_{i_t} Mp(i_t)} \E_{i_t}\norm{\oper P_{\Omega} \nabla g_{i_t} (w^{\star})}_2 \\
&\quad+ \sqrt{\eta^2-1}  \sqrt{1 + \gamma^2 \alpha_{3k} \overline{\rho}^+_{3k} - 2\gamma \rho^-_{3k} } \norm{w^t-w^{\star}}_2 + \sqrt{\eta^2-1}  \frac{\gamma}{\min_{i_t} Mp(i_t)} \E_{i_t} \norm{\nabla g_{i_t} (w^{\star})}_2 \\
&= \left( 2 \sqrt{\left(1- (2\gamma - \gamma^2 \alpha_{3k}) \rho^-_{3k} \right)} + \sqrt{(\eta^2-1)\left(1 + \gamma^2 \alpha_{3k} \overline{\rho}^+_{3k} - 2\gamma \rho^-_{3k} \right)} \right) \norm{w^t - w^{\star}}_2 \\
&\quad+ \frac{\gamma}{\min_{i_t} Mp(i_t)} \left( 2 \E_{i_t} \norm{\oper P_{\Omega} \nabla g_{i_t} (w^{\star})}_2 + \sqrt{\eta^2 - 1} \E_{i_t} \norm{\nabla g_{i_t} (w^{\star})}_2 \right) \\
&\leq \kappa \norm{w^t -w^{\star}}_2 +  (\sigma_{w^{\star}} + \sigma_{e^t} ),
\end{split}
\end{equation}
where $\kappa$ and $\sigma_{w^{\star}}$ are defined in (\ref{eqt::kappa of StoIHT}) and (\ref{eqt::sigma of StoIHT}) and 
$$
\sigma_{e^t} \triangleq \frac{\gamma}{\min_{i} Mp(i)} \left( 2 \max_{|\Omega| \leq 3k} \norm{\oper P_{\Omega} e^t}_2 + \sqrt{\eta^2 - 1} \norm{e^t}_2 \right).
$$
Taking the expectation on both sides with respect to $I_{t-1}$ yields
$$
\E_{I_t} \norm{w^{t+1}-w^{\star}}_2 \leq \kappa \E_{I_{t-1}} \norm{w^t - w^{\star}}_2 + (\sigma_{w^{\star}} + \sigma_{e}),
$$
where $\sigma_e = \max_{j \in [t]} \sigma_{e^j}$. Applying this result recursively completes the proof.

\end{proof}

\subsection{Proof of Theorem \ref{thm::StoGradMP with inexact gradients}}

\begin{proof}
The analysis of Theorem \ref{thm::StoGradMP with inexact gradients} follows closely to that of Theorem \ref{thm::StoGradMP}. In particular, we will apply Lemmas \ref{lem::bound l2 w^(t+1)-w*}, \ref{lem::bound l2 b^t - w*}, and the following lemma to obtain the proof. 
\begin{lem}
\label{lem::bound L2 P_Gamma^c b^t-w*-inexact gradient}
Denote $\widehat{\Gamma}$ as the set obtained from the $t$-th iteration. Then,
\begin{equation}
\begin{split}
\E_{i_t} \norm{\oper P_{\widehat{\Gamma}^c} (b^t - w^{\star})}_2  &\leq \left( \max_i \sqrt{Mp(i)} \sqrt{\frac{\frac{2\eta_1^2-1}{\eta^2_1}\rho^+_{4k} - \rho^-_{4k}}{\rho^-_{4k}}}  + \frac{\sqrt{\eta^2_1-1}}{\eta_1} \right) \norm{w^t-w^{\star}}_2 + \sigma_2,
\end{split}
\end{equation}
where 
$$
\sigma_2 \triangleq \frac{2 \max_{i_t} p(i_t)}{\rho^-_{4k} \min_{i_t} p(i_t)} \left( \max_{|\Omega| \leq 4k, i_t \in [M]}\norm{\oper P_{\Omega} \nabla f_{i_t} (w^{\star})}_2 + \max_t \norm{e^t}_2 \right).
$$
\end{lem}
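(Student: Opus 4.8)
The plan is to reduce the statement to the exact-gradient Lemma~\ref{lem::bound L2 P_Gamma^c b^t-w*} by folding the deterministic perturbation into the objective. Fix the iteration index $t$, so that $e^t$ is a fixed vector, and set $h_i(w)\triangleq f_i(w)+\inner{e^t,w}$ for each $i\in[M]$ and $\bar h(w)\triangleq\frac1M\sum_{i=1}^M h_i(w)=F(w)+\inner{e^t,w}$. Adding a linear term leaves gradient differences unchanged, so each $h_i$ satisfies $\oper D$-RSS with the same constant $\rho^+_k(i)$, and it leaves the Bregman expression in~\eqref{eqt::D-RSC} unchanged, so $\bar h$ satisfies $\oper D$-RSC with the same constant $\rho^-_k$; moreover $\E_i\frac1{Mp(i)}\nabla h_i(w)=\nabla\bar h(w)$ and $\E_i\frac1{Mp(i)}h_i(w)=\bar h(w)$. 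The crucial observation is that the contaminated proxy in Algorithm~\ref{alg:StoGradMP} is literally $r^t=\nabla f_{i_t}(w^t)+e^t=\nabla h_{i_t}(w^t)$, an \emph{exact} gradient of $h_{i_t}$ at $w^t$, so the identification step computes $\Gamma=\text{approx}_{2k}(\nabla h_{i_t}(w^t),\eta_1)$.

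Next I would note that the proof of Lemma~\ref{lem::bound L2 P_Gamma^c b^t-w*} uses only: the $\oper D$-RSS of the single selected function $f_{i_t}$; the $\oper D$-RSC of the average $F$; the identities $\oper P_{\widehat\Gamma^c}b^t=\oper P_{\widehat\Gamma^c}w^t=0$ together with $\Gamma\subseteq\widehat\Gamma$ (so that the \emph{optimality} of $b^t$ at the estimate step is never used, only that $b^t\in\text{span}(D_{\widehat\Gamma})$); and the approximation inequality~\eqref{eqt::approximation consequence 2} applied to the proxy vector. All four are available for the triple $(\{h_i\}_{i\in[M]},\bar h,\nabla h_{i_t}(w^t))$, so that proof runs verbatim and yields the claimed inequality with $\nabla f_{i_t}$ replaced throughout by $\nabla h_{i_t}$: the contraction factor on $\norm{w^t-w^\star}_2$ is unchanged (it depends only on $\rho^+_{4k},\rho^-_{4k},p(\cdot),\eta_1$), and the additive term is $\frac{2\max_i p(i)}{\rho^-_{4k}\min_i p(i)}\max_{|\Omega|\le4k,\,i\in[M]}\norm{\oper P_\Omega\nabla h_i(w^\star)}_2$. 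Since $\nabla h_i(w^\star)=\nabla f_i(w^\star)+e^t$, the triangle inequality gives $\norm{\oper P_\Omega\nabla h_i(w^\star)}_2\le\norm{\oper P_\Omega\nabla f_i(w^\star)}_2+\norm{e^t}_2$ for every $|\Omega|\le4k$, so this additive term is at most $\frac{2\max_i p(i)}{\rho^-_{4k}\min_i p(i)}(\max_{|\Omega|\le4k,\,i}\norm{\oper P_\Omega\nabla f_i(w^\star)}_2+\max_t\norm{e^t}_2)$, which is precisely the $\sigma_2$ in the statement (passing from $\norm{e^t}_2$ to $\max_t\norm{e^t}_2$ is the harmless overestimate that makes the eventual error bound uniform in $t$).

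The only step I expect to require real care is the audit above: verifying that nothing in the proof of Lemma~\ref{lem::bound L2 P_Gamma^c b^t-w*} draws on a property of the $f_i$ other than $\oper D$-RSS/$\oper D$-RSC, or on the estimate step beyond span membership of $b^t$ — all of which survive the shift $f_i\mapsto f_i+\inner{e^t,\cdot}$. If one wishes to avoid the reduction, the alternative is to repeat that proof line by line with $r^t$ in place of $\nabla f_{i_t}(w^t)$: writing $\oper P_R\nabla f_{i_t}(w^t)=\oper P_R r^t-\oper P_R e^t$ and carrying $r^t$ through the $z,x,y$ construction, the two stray contributions $\inner{\oper P_R e^t,\Delta}$ and $-\inner{e^t,y}$ combine into $\E_{i_t}\inner{\frac1{Mp(i_t)}e^t,\Delta-y}$. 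The delicate point is then to steer this into the \emph{linear} coefficient of the quadratic in $\E_{i_t}\norm{\Delta-y}_2$ (alongside the $\nabla f_{i_t}(w^\star)$ term), so that it enters $\sigma_2$ additively without touching $\kappa$; steering it into the constant term instead would produce a spurious cross term $\norm{e^t}_2\,\norm{w^t-w^\star}_2$.
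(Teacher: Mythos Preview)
Your reduction argument is correct and is a genuinely different route from the paper's. The paper does \emph{not} fold $e^t$ into modified functions $h_i$; instead it takes precisely the ``alternative'' you sketch in your last paragraph: it sets $g_{i_t}(w)\triangleq\nabla f_{i_t}(w)+e^t$, writes $\oper P_R\nabla f_{i_t}(w^t)=\oper P_R g_{i_t}(w^t)-\oper P_R e^t$, builds $z,x,y$ from $g_{i_t}(w^t)$, and the two stray pieces combine exactly into $-\E_{i_t}\inner{\tfrac{1}{Mp(i_t)}e^t,\Delta-y}$, which is then bounded by Cauchy--Schwarz as $\tfrac{1}{\min_i Mp(i)}\norm{e^t}_2\,\E_{i_t}\norm{\Delta-y}_2$ and absorbed into the linear coefficient $b$ of the quadratic in $u=\E_{i_t}\norm{\Delta-y}_2$ --- just as you anticipate in your closing remark about avoiding a spurious cross term. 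So you have correctly predicted both the structure and the delicate step of the paper's proof.

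What the two approaches buy: your reduction is cleaner and shorter --- once you observe that $r^t=\nabla h_{i_t}(w^t)$ and that linear perturbations preserve both $\oper D$-RSS and $\oper D$-RSC constants, Lemma~\ref{lem::bound L2 P_Gamma^c b^t-w*} applies verbatim and the only new work is the triangle inequality on $\norm{\oper P_\Omega\nabla h_i(w^\star)}_2$. Your audit of that lemma's proof is accurate: it uses only span membership of $b^t$ and $w^t$ (never the optimality of $b^t$, which belongs to Lemma~\ref{lem::bound l2 b^t - w*}), $\oper D$-RSS of the selected component, $\oper D$-RSC of the average, and the approximation inequality~\eqref{eqt::approximation consequence 2} applied to the proxy --- all invariant under $f_i\mapsto f_i+\inner{e^t,\cdot}$. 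The paper's direct approach, on the other hand, makes the mechanism by which $e^t$ enters $\sigma_2$ explicit at each line, which is useful if one later wants a finer dependence on $e^t$ (e.g.\ projected rather than full norm).
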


Given this lemma, the proof is exactly the same as that of Theorem \ref{thm::StoGradMP}. Now, we proceed to prove Lemma \ref{lem::bound L2 P_Gamma^c b^t-w*-inexact gradient}. Denote $\Delta = w^{\star} - w^t$ and $g_{i_t}(w) \triangleq \nabla f_{i_t} (w) + e^t$. Similar to the analysis of Lemma \ref{lem::bound L2 P_Gamma^c b^t-w*}, we start by applying the $\oper D$-RSC,
\begin{equation}
\begin{split}
\label{inq::proof Lemma 3-1st inequality-inexact gradient}
F(w^{\star}) &- F(w^t) - \frac{\rho^-_{4k}}{2} \norm{w^{\star} - w^t}_2^2 \\
&\geq \inner{\nabla F(w^t),w^{\star} - w^t} \\
&= \E_{i_t} \inner{\frac{1}{Mp(i_t)} \nabla f_{i_t}(w^t),\Delta} \\
&= \E_{i_t} \inner{\frac{1}{Mp(i_t)}  \oper P_R (g_{i_t}(w^t)) - e^t,\Delta} \\
&\geq - \E_{i_t} \norm{\frac{1}{Mp(i_t)}  \oper P_R g_{i_t} (w^t)}_2 \norm{\Delta}_2 - \E_{i_t} \inner{\frac{1}{Mp(i_t)}  \oper P_R e^t ,\Delta} .
\end{split} 
\end{equation}
Again, applying inequality (\ref{eqt::approximation consequence 2}) allows us to write $\norm{\oper P_R g_{i_t}(w^t)}_2 \leq \norm{\oper P_{\Gamma} g_{i_t}(w^t)}_2 + \frac{\sqrt{\eta^2_1-1}}{\eta_1} \norm{\oper P_{\Gamma^c} g_{i_t}(w^t)}_2$. We now apply this observation to the above inequality. Denote $z \triangleq -\frac{\oper P_{\Gamma} g_{i_t}(w^t)}{\norm{\oper P_{\Gamma} g_{i_t}(w^t)}_2} \norm{\Delta}_2$ and $x \triangleq \frac{\oper P_{\Gamma^c} g_{i_t}(w^t)}{\norm{\oper P_{\Gamma^c} g_{i_t}(w^t)}_2} \norm{\Delta}_2$ and follow the same procedure in formula (\ref{inq::proof Lemma 3-2nd inequality}) with $\nabla f_{i_t}(w^t)$ replaced by $g_{i_t}(w^t)$, we arrive at
\begin{equation}
\begin{split}
\label{inq::proof Lemma 3-2nd inequality-inexact gradient}
&- \E_{i_t} \norm{\frac{1}{Mp(i_t)}  \oper P_R g_{i_t}(w^t)}_2 \norm{\Delta}_2 \\
&\geq -\E_{i_t} \norm{\frac{1}{Mp(i_t)}  \oper P_{\Gamma} g_{i_t}(w^t)}_2 \norm{\Delta}_2 -  \frac{\sqrt{\eta^2_1-1}}{\eta_1} \E_{i_t} \norm{\frac{1}{Mp(i_t)} \oper P_{\Gamma^c} g_{i_t}(w^t)}_2 \norm{\Delta}_2 \\
&= \E_{i_t} \inner{\frac{1}{Mp(i_t)} g_{i_t}(w^t), z - \frac{\sqrt{\eta^2_1-1}}{\eta_1} x} \\
&= \E_{i_t} \inner{\frac{1}{Mp(i_t)} \nabla f_{i_t}(w^t), z - \frac{\sqrt{\eta^2_1-1}}{\eta_1} x} + \E_{i_t} \inner{\frac{1}{Mp(i_t)} e^t, z - \frac{\sqrt{\eta^2_1-1}}{\eta_1} x}.
\end{split} 
\end{equation}
Denote $y \triangleq z - \frac{\sqrt{\eta^2_1-1}}{\eta_1} x$ and combine (\ref{inq::proof Lemma 3-1st inequality-inexact gradient}) and (\ref{inq::proof Lemma 3-2nd inequality-inexact gradient}), we get
\begin{equation}
\label{inq::proof Lemma 3-3rd inequality-inexact gradient}
\begin{split}
F(w^{\star}) - F(w^t) - \frac{\rho^-_{4k}}{2} \norm{\Delta}_2^2 &\geq \E_{i_t} \inner{\frac{1}{Mp(i_t)} \nabla f_{i_t}(w^t), y} - \E_{i_t} \inner{\frac{1}{Mp(i_t)} e^t, \Delta - y} \\
&\geq \E_{i_t} \inner{\frac{1}{Mp(i_t)} \nabla f_{i_t}(w^t), y} - \frac{1}{\min_{i_t} Mp(i_t)} \norm{e^t}_2 \E_{i_t} \norm{\Delta - y}_2,
\end{split}
\end{equation}
where the last argument follows from Cauchy-Schwarz inequality. We now use the $\oper D$-RSS property to lower bound the right-hand side of the above inequality. Recall that from the definition of $\oper D$-RSS, we can show that
\begin{equation}
\begin{split}
\nonumber
\inner{\nabla f_{i_t}(w^t), y} &\geq f_{i_t}(w^t+ y) - f_{i_t}(w^t) - \frac{\rho^+_{4k}(i_t)}{2} \norm{y}_2^2.
\end{split}
\end{equation}
Multiply both sides with $\frac{1}{Mp(i_t)}$ and take the expectation on both sides with respect to the index $i_t$ and recall that $\E_{i_t} \frac{1}{Mp(i_t)} f_{i_t} (w^t) = F(w^t)$, we have
$$
\E_{i_t} \inner{\frac{1}{Mp(i_t)}  \nabla f_{i_t}(w^t), y} \geq \E_{i_t} \frac{1}{Mp(i_t)}  f_{i_t }(w^t + y) - F(w^t) - \frac{1}{2} \E_{i_t} \frac{\rho^+_{4k}(i_t)}{Mp(i_t)} \norm{y}_2^2.
$$

\noindent Combining with inequality (\ref{inq::proof Lemma 3-3rd inequality-inexact gradient}) and removing the common terms yields
\begin{equation}
\begin{split}
\nonumber
 \frac{1}{2}\E_{i_t} &\frac{\rho^+_{4k}(i_t)}{Mp(i_t)} \norm{y}_2^2 - \frac{\rho^-_{4k}}{2} \norm{\Delta}_2^2 \\
&\geq \E_{i_t} \frac{1}{Mp(i_t)}  f_{i_t }(w^t+ y) - F(w^{\star}) - \frac{1}{\min_{i_t} Mp(i_t)} \norm{e^t}_2 \E_{i_t} \norm{\Delta - y}_2\\
&= \E_{i_t} \frac{1}{Mp(i_t)} \left( f_{i_t }(w^t+ y) - f_{i_t }(w^{\star}) \right) - \frac{1}{\min_{i_t} Mp(i_t)} \norm{e^t}_2 \E_{i_t} \norm{\Delta - y}_2.
\end{split}
\end{equation}
Apply the $\oper D$-RSC one more time to the right-hand side and follow a similar procedure as (\ref{inq::proof Lemma 3-quadratic inequality}) with an additional term involving the gradient noise, we get
\begin{equation}
\begin{split}
  \frac{1}{2}\E_{i_t} \frac{\rho^+_{4k}(i_t)}{Mp(i_t)} \norm{y}_2^2 & - \frac{\rho^-_{4k}}{2} \norm{\Delta}_2^2 \\
&\geq \frac{\rho^-_{4k}}{2\max_{i_t} Mp(i_t)} \left(\E_{i_t} \norm{\Delta - y}_2 \right)^2 - \frac{\max_{i_t} \norm{\oper P_{\Gamma \cup R} \nabla f_{i_t} (w^{\star})}_2}{\min_{i_t} Mp(i_t)} \E_{i_t}\norm{\Delta - y}_2 \\
&\quad- \frac{1}{\min_{i_t} Mp(i_t)} \norm{e^t}_2 \E_{i_t} \norm{\Delta - y}_2.
\end{split}
\end{equation}

\noindent Solving the quadratic inequality $au^2 - 2bu -c\leq 0$ where $u = \E_{i_t} \norm{\Delta - y}_2$, $a = \frac{\rho^-_{4k}}{\max_{i_t} Mp(i_t)}$, $b = \frac{\max_{i_t} \norm{\oper P_{\Gamma \cup R} \nabla f_{i_t} (w^{\star})}_2}{\min_{i_t} Mp(i_t)}+\frac{1}{\min_{i_t} Mp(i_t)} \norm{e^t}_2$, and $c =  \E_{i_t} \frac{\rho^+_{4k}(i_t)}{Mp(i_t)} \norm{y}_2^2 - \rho^-_{4k} \norm{\Delta}_2^2$, we obtain 
\begin{equation}
\label{inq::proof Lemma 3-4th inequality-inexact gradient}
\E_{i_t} \norm{\Delta - y}_2 \leq \sqrt{\frac{c}{a}} + \frac{2b}{a}.
\end{equation}

\noindent Following the same steps after inequality (\ref{inq::proof Lemma 3-4th inequality}), we arrive at 
\begin{equation}
\begin{split}
\nonumber
\E_{i_t} \norm{\Delta - \oper P_{\Gamma} \Delta}_2 &\leq  \sqrt{\frac{c}{a}} + \frac{2b}{a} + \frac{\sqrt{\eta^2_1-1}}{\eta_1} \norm{\Delta}_2 \\
&= \max_i \sqrt{Mp(i)} \sqrt{\frac{\frac{2\eta_1^2-1}{\eta^2_1}\rho^+_{4k} - \rho^-_{4k}}{\rho^-_{4k}}} \norm{\Delta}_2 \\
&\quad+ \frac{2\max_i p(i)}{\rho^-_{4k} \min_i p(i)} \left( \max_{|\Omega| \leq 4k, i \in [M]}\norm{\oper P_{\Omega} \nabla f_i (w^{\star})}_2 + \norm{e^t}_2 \right) + \frac{\sqrt{\eta^2_1-1}}{\eta_1} \norm{\Delta}_2 \\
&= \left( \max_i \sqrt{Mp(i)} \sqrt{\frac{\frac{2\eta_1^2-1}{\eta^2_1} \rho^+_{4k} - \rho^-_{4k}}{\rho^-_{4k}}}  + \frac{\sqrt{\eta^2_1-1}}{\eta_1} \right) \norm{\Delta}_2 \\
&\quad+ \frac{2 \max_i p(i)}{\rho^-_{4k} \min_i p(i)} \left(\max_{|\Omega| \leq 4k, i \in [M]}\norm{\oper P_{\Omega} \nabla f_i (w^{\star})}_2 + \norm{e^t}_2 \right).
\end{split}
\end{equation}
The proof follows by combining this inequality with (\ref{inq::1st intemediate result}).

\end{proof}

\subsection{Proof of Theorem \ref{thm::StoGradMP with inexact gradient and approximated estimation}}

\begin{proof}
The proof follows exactly the same steps as that of Theorem \ref{thm::StoGradMP} in Section \ref{subsec::Proof of StoGradMP theorem}. The only difference is Lemma \ref{lem::bound l2 b^t - w*}, which is now replaced by the following lemma.
\begin{lem}
\label{lem::bound l2 b^t - w* - approximated estimation}
Denote $\widehat{\Gamma}$ as the set obtained from the $t$-th iteration and $i$ as the index selected randomly from $[M]$ with probability $p(i)$. We have,
\begin{equation}
\begin{split}
\nonumber
\E_{I_t} \norm{b^t - w^{\star}}_2 &\leq \sqrt{\frac{\alpha_{4k}}{\rho^-_{4k}}} \E_{I_t} \norm{P_{\widehat{\Gamma}^c} (b^t - w^{\star})}_2 + \sigma_1 + \epsilon^t,
\end{split}
\end{equation}
where $\alpha_k = \max_i \frac{\rho^+_k(i)}{Mp(i)}$ and $
\sigma_1 \triangleq \frac{3}{\rho^-_{4k}} \frac{1}{\min_i Mp(i)} \max_{|\Omega| \leq 3k, i \in [M]}\norm{\oper P_{\Omega} \nabla f_i(w^{\star}))}_2$.
\end{lem}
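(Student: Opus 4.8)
The plan is to reduce this lemma to the already-established Lemma \ref{lem::bound l2 b^t - w*} by comparing the approximate iterate $b^t$ with the exact minimizer $b^t_{\opt}$ of the sub-optimization \eqref{opt::get b^t}. Recall that the proof of Lemma \ref{lem::bound l2 b^t - w*} uses only the first-order optimality condition $\inner{\nabla F(\cdot),\, \cdot - z} \leq 0$ on the convex set $\oper C_{\widehat{\Gamma}}$, which is satisfied by $b^t_{\opt}$ but not by $b^t$. So I would first apply that lemma verbatim with the exact minimizer, i.e. with ``$b^t$'' there read as $b^t_{\opt}$, yielding
\begin{equation}
\nonumber
\E_{I_t} \norm{b^t_{\opt} - w^{\star}}_2 \leq \sqrt{\frac{\alpha_{4k}}{\rho^-_{4k}}}\, \E_{I_t} \norm{\oper P_{\widehat{\Gamma}^c} (b^t_{\opt} - w^{\star})}_2 + \sigma_1 .
\end{equation}

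Next I would transfer this back to $b^t$. Since the approximate solver stays on the subspace $\text{span}(D_{\widehat{\Gamma}})$, both $b^t$ and $b^t_{\opt}$ lie in it, hence $\oper P_{\widehat{\Gamma}^c} b^t = \oper P_{\widehat{\Gamma}^c} b^t_{\opt} = 0$ and therefore $\oper P_{\widehat{\Gamma}^c}(b^t_{\opt} - w^{\star}) = \oper P_{\widehat{\Gamma}^c}(b^t - w^{\star}) = -\oper P_{\widehat{\Gamma}^c} w^{\star}$, so the projection term on the right-hand side is unchanged. For the left-hand side I would use the triangle inequality together with the hypothesis $\norm{b^t - b^t_{\opt}}_2 \leq \epsilon^t$:
\begin{equation}
\nonumber
\norm{b^t - w^{\star}}_2 \leq \norm{b^t - b^t_{\opt}}_2 + \norm{b^t_{\opt} - w^{\star}}_2 \leq \epsilon^t + \norm{b^t_{\opt} - w^{\star}}_2 .
\end{equation}
Taking $\E_{I_t}$ of both sides and substituting the displayed bound gives exactly the claimed inequality (here $\epsilon^t$ is a deterministic upper bound, so it passes through the expectation unchanged).

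There is no serious obstacle here; the only point requiring care is that the approximate estimate $b^t$ must be taken to lie in $\text{span}(D_{\widehat{\Gamma}})$ — which is the natural model for an iterative solver of the restricted problem, and is also what keeps the subsequent Lemma \ref{lem::bound L2 P_Gamma^c b^t-w*-inexact gradient} applicable — so that the $\oper P_{\widehat{\Gamma}^c}$ term transfers without picking up an extra $\epsilon^t$-dependent contribution (otherwise one would get a factor like $1+\sqrt{\alpha_{4k}/\rho^-_{4k}}$ in front of $\epsilon^t$ rather than the clean $+\epsilon^t$). Once this lemma is in place, feeding it into the proof of Theorem \ref{thm::StoGradMP} in place of Lemma \ref{lem::bound l2 b^t - w*} produces the same recursion but with an additional additive term, which after unrolling $t$ times yields the stated bound with $\sigma_{\epsilon} = \max_{j\in[t]}\epsilon^j$, completing the proof of Theorem \ref{thm::StoGradMP with inexact gradient and approximated estimation}.
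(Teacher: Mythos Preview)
Your proposal is correct and follows essentially the same approach as the paper: apply the triangle inequality $\norm{b^t - w^{\star}}_2 \leq \norm{b^t_{\opt} - w^{\star}}_2 + \epsilon^t$, then invoke Lemma~\ref{lem::bound l2 b^t - w*} with $b^t_{\opt}$ in place of $b^t$. In fact you are more careful than the paper's own write-up, which silently identifies $\oper P_{\widehat{\Gamma}^c}(b^t_{\opt} - w^{\star})$ with $\oper P_{\widehat{\Gamma}^c}(b^t - w^{\star})$; your explicit observation that both $b^t$ and $b^t_{\opt}$ lie in $\text{span}(D_{\widehat{\Gamma}})$ (so their $\widehat{\Gamma}^c$-projections vanish) is exactly the missing justification.
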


From the triangular inequality, 
$$
\E_{i_t} \norm{b^t - w^{\star}}_2 \leq \E_{i_t} \norm{b^t_{\opt} - w^{\star}}_2 + \E_{i_t} \norm{b^t - b^t_{\opt}}_2 \leq \E_{i_t} \norm{b^t_{\opt} - w^{\star}}_2 + \epsilon^t.
$$
Applying Lemma \ref{lem::bound l2 b^t - w*} to get upper bound for $\E_{i_t} \norm{b^t_{\opt} - w^{\star}}_2$ will complete the proof of this lemma.

\end{proof}

\bibliographystyle{plain}
\bibliography{all_references}

\begin{thebibliography}{10}

\bibitem{ANW_2012_J}
A.~Agarwal, S.~Negahban, and M.~J. Wainwright.
\newblock Fast global convergence of gradient methods for high-dimensional
  statistical recovery.
\newblock {\em Ann. Statist.}, 40(5):2452--2482, 2012.

\bibitem{bach2011}
F.~Bach and E.~Moulines.
\newblock Non-asymptotic analysis of stochastic approximation algorithms for
  machine learning.
\newblock {\em Ad. Neural Inf. Process. Sys. (NIPS)}, 2011.

\bibitem{BRB_GraSP_2013_J}
S.~Bahmani, B.~Raj, and P.~T. Boufounos.
\newblock Greedy sparsity-constrained optimization.
\newblock {\em Journ. Mach. Learn. Res.}, 14:807--841, Mar. 2013.

\bibitem{DNO_optimization_2003_B}
D.~P. Bertsekas, A.~Nedic, and A.~E. Ozdaglar.
\newblock {\em Convex Analysis and Optimization}.
\newblock Athena Scientific, 2003.

\bibitem{BD_IHT_2009_J}
T.~Blumensath and M.~E. Davies.
\newblock Iterative hard thresholding for compressed sensing.
\newblock {\em Appl. Comp. Harm. Anal.}, 27(3):265--274, 2009.

\bibitem{boyd2004convex}
S.~P. Boyd and L.~Vandenberghe.
\newblock {\em Convex optimization}.
\newblock Cambridge {U}niversity {P}ress, 2004.

\bibitem{candes2006compressive}
E.~J. Cand{\`e}s.
\newblock Compressive sampling.
\newblock In {\em Proceed. {I}nt. {C}ongress {M}ath.}, volume~3, pages
  1433--1452. Madrid, Spain, 2006.

\bibitem{CRT_CS_2004_J}
E.~J. Cand\`{e}s, J.Romberg, and T.~Tao.
\newblock Robust uncertainty principles: exact signal reconstruction from
  highly incomplete frequency information.
\newblock {\em IEEE Trans. Inf. Theory}, 52(2):5406--5425, Feb. 2006.

\bibitem{CRT_Stability_2006a_J}
E.~J. Cand\`{e}s, J.Romberg, and T.~Tao.
\newblock Stable signal recovery from incomplete and inaccurate measurements.
\newblock {\em Comm. Pure Appl. Math.}, 59(8):1207--1223, Aug. 2006.

\bibitem{CP_MatrixRec_2009_J}
E.~J. Cand\`es and Y.~Plan.
\newblock Tight oracle inequalities for low-rank matrix recovery from a minimal
  number of noisy random measurements.
\newblock {\em IEEE Trans. Inf. Theory}, 57(4):2342--2359, Apr. 2011.

\bibitem{CT_CSdecoding_2005_J}
E.~J. Cand\`es and T.~Tao.
\newblock Decoding by linear programming.
\newblock {\em IEEE Trans. Inf. Theory}, 51(12):4203--4215, Dec. 2005.

\bibitem{CRPW_2012_J}
V.~Chandrasekaran, B.~Recht, P.~A. Parrilo, and A.~Willsky.
\newblock The convex geometry of linear inverse problems.
\newblock {\em Found. Comp. Math.}, 12(6):805--849, Dec. 2012.

\bibitem{CSwebpage}
Compressed sensing webpage.
\newblock \verb=http://www.dsp.ece.rice.edu/cs/=.

\bibitem{DDM04:Iterative-Thresholding}
I.~Daubechies, M.~Defrise, and C.~De Mol.
\newblock An iterative thresholding algorithm for linear inverse problems with
  a sparsity constraint.
\newblock {\em Comm. Pure Appl. Math.}, 57:1413--1457, 2004.

\bibitem{davenport2012signal}
M.~Davenport, D.~Needell, and M.~Wakin.
\newblock Signal space {C}o{S}a{MP} for sparse recovery with redundant
  dictionaries.
\newblock {\em IEEE Trans. Inf. Theory}, 59(10):6820 -- 6829, 2012.

\bibitem{Donoho_CS_2006_J}
D.~L. Donoho.
\newblock Compressed sensing.
\newblock {\em IEEE Trans. Inf. Theory}, 52(4):1289--1306, April 2006.

\bibitem{DMMS_leastSquare_2011_J}
P.~Drineas, M.~W. Mahoney, S.~Muthukrishnan, and T.~Sarl\'os.
\newblock Faster least squares approximation.
\newblock {\em Numerische Mathematik}, 117(2):219--249, Feb. 2011.

\bibitem{FNW07:Gradient-Projection}
M.~A.~T. Figueiredo, R.~D. Nowak, and S.~J. Wright.
\newblock Gradient projection for sparse reconstruction: {A}pplication to
  {C}ompressed {S}ensing and other inverse problems.
\newblock {\em IEEE J. Sel. Topics Sign. Process.: Spec. Issue Convex Opt.
  Methods. Sign. Process.}, 1(4):586--598, 2007.

\bibitem{Fourcard_2012_RIP}
S.~Foucart.
\newblock Sparse recovery algorithms: sufficient conditions in terms of
  restricted isometry constants.
\newblock {\em Approx. Theory}, 13:65--77, 2012.

\bibitem{giryes2012rip}
R.~Giryes and M.~Elad.
\newblock {RIP}-based near-oracle performance guarantees for {SP},
  {C}o{S}a{MP}, and {IHT}.
\newblock {\em IEEE Trans. Sign. Process.}, 60(3):1465--1468, 2012.

\bibitem{giryes2014greedy}
R.~Giryes, S.~Nam, M.~Elad, R.~Gribonval, and M.~E> Davies.
\newblock Greedy-like algorithms for the cosparse analysis model.
\newblock {\em Linear Algebra Appl.}, 441:22--60, 2014.

\bibitem{giryes2013greedy}
R.~Giryes and D.~Needell.
\newblock Greedy signal space methods for incoherence and beyond.
\newblock Submitted, 2013.

\bibitem{halko2011finding}
N.~Halko, P.-G. Martinsson, and J.~A. Tropp.
\newblock Finding structure with randomness: Probabilistic algorithms for
  constructing approximate matrix decompositions.
\newblock {\em SIAM Review}, 53(2):217--288, 2011.

\bibitem{JMD_SVP_2010_C}
P.~Jain, R.~Mega, and I.~Dhillon.
\newblock Guaranteed rank minimization via {S}ingular {V}alue {P}rojection.
\newblock In {\em Ad. Neural Infor. Proc. Sys. (NIPS)}, pages 937--945,
  Vancouver, BC, Canada, Dec. 2010.

\bibitem{JJR_2011_C}
A.~Jalali, C.~C. Johnson, and P.~Ravikumar.
\newblock On learning discrete graphical models using greed methods.
\newblock In {\em Ad. Neural Infor. Proc. Sys. (NIPS)}, Granada, Spain, Dec.
  2011.

\bibitem{Kac37:Angenaeherte-Aufloesung}
S.~Kaczmarz.
\newblock Angen\"aherte aufl\"osung von systemen linearer gleichungen.
\newblock {\em Bull. Int. Acad. Polon. Sci. Lett. Ser. A}, pages 335--357,
  1937.

\bibitem{kim2007interior}
S.-J. Kim, K.~Koh, M.~Lustig, S.~Boyd, and D.~Gorinevsky.
\newblock An interior-point method for large-scale {L}1-regularized least
  squares.
\newblock {\em IEEE J. Sel. Topics Sign. Process.}, 1(4):606--617, 2007.

\bibitem{needell2013stochastic}
D.~Needell, N.~Srebro, and R.~Ward.
\newblock Stochastic gradient descent, weighted sampling, and the randomized
  {K}aczmarz algorithm.
\newblock Submitted, 2013.

\bibitem{NT_CoSaMP_2010_J}
D.~Needell and J.~A. Tropp.
\newblock Co{S}a{MP}: {I}terative signal recovery from incomplete and
  inaccurate samples.
\newblock {\em Appl. Comp. Harm. Anal.}, 26:301--321, 2008.

\bibitem{NV_2010_J}
D.~Needell and R.~Vershynin.
\newblock Signal recovery from inaccurate and incomplete measurements via
  regularized orthogonal matching pursuit.
\newblock {\em IEEE J. Sel. Topics Sign. Process.}, 4(2):310--316, Apr. 2010.

\bibitem{NRWY_2010_C}
S.~Negahban, P.~Ravikumar, M.~J. Wainwright, and B.~Yu.
\newblock A unified framework for high-dimensional analysis of {M}-estimators
  with decomposable regularizers.
\newblock In {\em Ad. Neural Inf. Process. Sys. (NIPS)}, Vancouver, BC, Canada,
  Dec. 2009.

\bibitem{Nesterov_SCD_2012_J}
Y.~Nesterov.
\newblock Efficiency of coordinate descent methods on huge-scale optimization
  problems.
\newblock {\em SIAM J. Optimiz.}, 22(2):341--362, 2012.

\bibitem{NCT_gradMP_2013_J}
N.~H. Nguyen, S.~Chin, and T.~D. Tran.
\newblock A unified iterative greedy algorithm for sparsity-constrainted
  optimization.
\newblock 2013.
\newblock Submitted.

\bibitem{RT_BSCD_2011_J}
R.~Richt\'arik and M.~Tak\'ac.
\newblock Iteration complexity of randomized block-coordinate descent methods
  for minimizing a composite function.
\newblock {\em Math. Program.}, 144(2):1--38, 2014.

\bibitem{SGS_2011_C}
S.~Shalev-Shwartz, A.~Gonen, and O.~Shamir.
\newblock Large-scale convex minimization with a low-rank constraint.
\newblock In {\em Proc. 29th Inter. Conf. Mach. Learn. (ICML)}, pages 329--336,
  Bellevue, WA, USA, June 2011.

\bibitem{SSZ_2010_J}
S.~Shalev-Shwartz, N.~Srebro, , and T.~Zhang.
\newblock Trading accuracy for sparsity in optimization problems with sparsity
  constraints.
\newblock {\em SIAM Journ. Opt.}, 20(6):2807--2832, 2010.

\bibitem{spall2005introduction}
James~C Spall.
\newblock {\em Introduction to stochastic search and optimization: estimation,
  simulation, and control}, volume~65.
\newblock John Wiley \& Sons, 2005.

\bibitem{SV09:Randomized-Kaczmarz}
T.~Strohmer and R.~Vershynin.
\newblock A randomized {K}aczmarz algorithm with exponential convergence.
\newblock {\em J. Fourier Anal. Appl.}, 15(2):262--278, 2009.

\bibitem{Tibshirani_Lasso_1996_J}
R.~Tibshirani.
\newblock Regression shrinkage and selection via the lasso.
\newblock {\em J. Roy. Statist. Soc. Ser. B}, 58(1):267--288, 1996.

\bibitem{Tropp_OMP_2004_J}
J.~A. Tropp.
\newblock Greed is good: Algorithmic results for sparse approximation.
\newblock {\em IEEE Trans. Inf. Theory}, 50(10):2231--2242, 2004.

\bibitem{YL_2006_J}
M.~Yuan and Y.~Lin.
\newblock Model selection and estimation in regression with grouped variables.
\newblock {\em J. Roy. Statist. Soc. Ser. B}, 68(1):49--67, 2006.

\bibitem{YLZ_2014_C}
X.-T. Yuan, P.~Li, and T.~Zhang.
\newblock Gradient hard thresholding pursuit for sparsity-constrained
  optimization.
\newblock In {\em Proc. 32nd Inter. Conf. Mach. Learn. (ICML)}, Beijing, China,
  Dec. 2014.

\bibitem{YY_2012_C}
X.-T. Yuan and S.~Yan.
\newblock Forward basis selection for sparse approximation over dictionary.
\newblock In {\em Proc. 15th Inter. Conf. Art. Intel. Statist. (AISTATS)},
  pages 1377--1388, La Palma, Canary Islands, Apr. 2012.

\bibitem{Zhang_2011_J}
T.~Zhang.
\newblock Adaptive forward-backward greedy algorithm for learning sparse
  representations.
\newblock {\em IEEE Trans. Inf. Theory}, 57(7):4689--4708, Jul. 2011.

\bibitem{zhang2011sparse}
Tong Zhang.
\newblock Sparse recovery with orthogonal matching pursuit under {RIP}.
\newblock {\em IEEE Trans. Inf. Theory}, 57(9):6215--6221, 2011.

\end{thebibliography}
\end{document}